\numberwithin{equation}{section}
\newtheorem{theorem}{Theorem}[section]
\newtheorem{corollary}[theorem]{Corollary}
\newtheorem{lemma}[theorem]{Lemma}
\newtheorem{prop}[theorem]{Proposition}
\theoremstyle{definition}
\newtheorem{remark}[theorem]{Remark}
\theoremstyle{definition}
\newtheorem{definition}[theorem]{Definition}
\theoremstyle{definition}
\def\dashint{\operatorname%
{\,\,\text{\bf-}\kern-.98em\DOTSI\intop\ilimits@\!\!}}
\def\\det{\text{\det}}
\def\Xint#1{\mathchoice
 {\XXint\displaystyle\textstyle{#1}}%
 {\XXint\textstyle\scriptstyle{#1}}%
 {\XXint\scriptstyle\scriptscriptstyle{#1}}%
 {\XXint\scriptscriptstyle\scriptscriptstyle{#1}}%
 \!\int}
\def\XXint#1#2#3{{\setbox0=\hbox{$#1{#2#3}{\int}$}
  \vcenter{\hbox{$#2#3$}}\kern-.5\wd0}}
\def\dashint{\Xint-}
\def\.5{\frac{1}{2}}
\newcommand{\RN}[1]{%
  \textup{\uppercase\expandafter{\romannumeral#1}}%
}
\renewcommand{\epsilon}{\varepsilon}
\newcounter{marnote}
\begin{document}

\title[Local regularity for anisotropic weighted elliptic and parabolic equations]{Local regularity for nonlinear elliptic and parabolic equations with anisotropic weights}

\author[C.X. Miao]{Changxing Miao}
\address[C.X. Miao] {1. Beijing Computational Science Research Center, Beijing 100193, China.}
\address{2. Institute of Applied Physics and Computational Mathematics, P.O. Box 8009, Beijing, 100088, China.}
\email{miao\_changxing@iapcm.ac.cn}

\author[Z.W. Zhao]{Zhiwen Zhao}

\address[Z.W. Zhao]{Beijing Computational Science Research Center, Beijing 100193, China.}
\email{zwzhao365@163.com}

%\footnote{}

\date{\today} % delete this line to display the current date

%%% BEGIN DOCUMENT

\maketitle
%\tableofcontents
\begin{abstract}
The main purpose of this paper is to capture the asymptotic behavior for solutions to a class of nonlinear elliptic and parabolic equations with the anisotropic weights consisting of two power-type weights of different dimensions near the degenerate or singular point, especially covering the weighted $p$-Laplace equations and weighted fast diffusion equations. As a consequence, we also establish the local H\"{o}lder estimates for their solutions in the presence of single power-type weights.
\end{abstract}

\section{Introduction and main results}%\label{intro}

In this paper, we aim to study the local behavior of solutions to a class of nonlinear elliptic and parabolic equations with the weights comprising two power-type weights of different dimensions. The novelties of this paper are two-fold: one is that the weights embodied in nonlinear elliptic and parabolic equations are anisotropic; another is the establishment of anisotropic weighted Poincar\'{e} inequality in Section \ref{SEC002} below, which is a crucial tool to application of the De Giorgi truncation method \cite{D1957} in the following proofs. In particular, the anisotropy of the weights will bring great difficulties of analyses and computations for both establishing anisotropic weighted Poincar\'{e} inequality and applying the De Giorgi truncation method. The mathematical formulations and main results for the considered nonlinear elliptic and parabolic problems with anisotropic weights will be, respectively, presented as follows.

\subsection{The nonlinear elliptic equations with anisotropic weights}
Consider a bounded smooth domain $\Omega\subset\mathbb{R}^{n}$ with $0\in\Omega$ and $n\geq2$. With regard to the weighted elliptic equations, we mainly study the local regularity of solution to the following problem
\begin{align}\label{PROBLEM001ZW}
\begin{cases}
\mathrm{div}(Aw|\nabla u|^{p-2}\nabla u)=0,& \mathrm{in}\;\Omega,\\
0\leq u\leq\overline{M}<\infty,&\mathrm{in}\;\Omega,
\end{cases}
\end{align}
where $w=|x'|^{\theta_{1}}|x|^{\theta_{2}}$, the values of $\theta_{1}$ and $\theta_{2}$ are assumed in the following theorems, $1<p<n+\theta_{1}+\theta_{2}$, $\overline{M}$ is a given positive constant, $A(x)=(a_{ij}(x))_{n\times n}$ is symmetric and satisfies
\begin{align}\label{WQ090}
\lambda^{-1}|\xi|^{2}\leq\sum^{n}_{i,j=1}a_{ij}(x)\xi_{i}\xi_{j}\leq\lambda|\xi|^{2},\quad\lambda\geq1,\; \text{for a.e. $x\in\Omega$ and all $\xi\in\mathbb{R}^{n}$.}
\end{align}
Here and throughout this paper, we use superscript prime to denote $(n-1)$-dimensional variables and domains, such as $x'$ and $B'$. Moreover, in the following we simplify the notations $B_{R}(0)$ and $B_{R}'(0')$ as $B_{R}$ and $B'_{R}$, respectively, where $R>0$. The prototype equation is the anisotropic weighted $p$-Laplacian, that is, the equation in the case when $A=I$ in \eqref{PROBLEM001ZW}. Remark that the origin can be called the degenerate or singular point of the weight. For example, if $\theta_{1}>0$, $\theta_{2}>0$, then the weight $w=|x'|^{\theta_{1}}|x|^{\theta_{2}}\rightarrow0,$ as $|x|\rightarrow0$, while, for $\theta_{1}<0$, $\theta_{2}<0$, it blows up as $|x|$ tends to zero. For the former, the origin is called the degenerate point of the weight, while it is called the singular point for the latter.

For the weighted elliptic problem \eqref{PROBLEM001ZW}, Fabes, Kenig and Serapioni \cite{FKS1982} established the local H\"{o}lder regularity of weak solutions under the case of $\theta_{1}=0,\,\theta_{2}>-n$ and $p=2$. However, the value of H\"{o}lder index $\alpha$ obtained in \cite{FKS1982} is not explicit. Recently, Dong, Li and Yang \cite{DLY2022} utilized spherical harmonic expansion to find the exact value of index $\alpha$ for the solution near the degenerate point of the weight. To be precise, for problem \eqref{PROBLEM001ZW} with $\Omega$ replaced by $B_{R}$, $R>0$, let $n\geq2$, $\theta_{1}=0$, $\theta_{2}=p=2$ and $A=\kappa(x)I$, where $\kappa$ satisfies that $\lambda^{-1}\leq\kappa\leq\lambda$ in $B_{R}$ and $\int_{\mathbb{S}^{n-1}}\kappa x_{i}=0$, $i=1,2,...,n$. Based on these assumed conditions, they derived
\begin{align*}
u(x)=u(0)+O(1)|x|^{\alpha},\quad\alpha=\frac{-n+\sqrt{n^{2}+4\tilde{\lambda}_{1}}}{2},\quad\mathrm{in}\;B_{R/2},
\end{align*}
where $O(1)$ represents some quantity such that $|O(1)|\leq C=C(n,\lambda,\overline{M}),$ $\tilde{\lambda}_{1}\leq n-1$ is the first nonzero eigenvalue of the following eigenvalue problem:
\begin{align*}
-\mathrm{div}_{\mathbb{S}^{n-1}}(\kappa(\xi)\nabla_{\mathbb{S}^{n-1}}u(\xi))=\tilde{\lambda} \kappa(\xi) u(\xi),\quad\xi\in\mathbb{S}^{n-1}.
\end{align*}
In particular, $\tilde{\lambda}_{1}=n-1$ if $A=I$. See Lemmas 2.2 and 5.1 in \cite{DLY2022} for more details. By finding the explicit exponent $\alpha$, they succeeded in solving the optimal gradient blow-up rate for solution to the insulated conductivity problem in dimensions greater than two, which has been previously regarded as a challenging problem. By their investigations in \cite{DLY2021,DLY2022}, we realize that the H\"{o}lder regularity for solutions to the weighted elliptic problem \eqref{PROBLEM001ZW} is in close touch with the insulated conductivity problem arising from composite materials. Then the study on the regularity for weighted elliptic problem \eqref{PROBLEM001ZW} is a topic of theoretical interest and also of great relevance to applications for the insulated composites. It is worth emphasizing that when $p>2$, the exact value of index $\alpha$ still remains open. In addition, with regard to the H\"{o}lder regularity for nonlinear degenerate elliptic equations without weights, we refer to \cite{L2019,S1964,T1967} and the references therein.

Before stating the definition of weak solution to problem \eqref{PROBLEM001ZW}, we first introduce some notations. Throughout this paper, we will use $L^{p}(\Omega,w)$ and $W^{1,p}(\Omega,w)$ to represent weighted $L^{p}$ space and weighted Sobolev space with their norms, respectively, written as
\begin{align*}
\begin{cases}
\|u\|_{L^{p}(\Omega,w)}=\left(\int_{\Omega}|u|^{p}wdx\right)^{\frac{1}{p}},\\
\|u\|_{W^{1,p}(\Omega,w)}=\left(\int_{\Omega}|u|^{p}wdx\right)^{\frac{1}{p}}+\left(\int_{\Omega}|\nabla u|^{p}wdx\right)^{\frac{1}{p}}.
\end{cases}
\end{align*}
We say that $u\in W^{1,p}(\Omega,w)$ is a weak solution of problem \eqref{PROBLEM001ZW} if
\begin{align*}
\int_{\Omega}Aw|\nabla u|^{p-2}\nabla u\cdot\nabla\varphi dx=0,\quad\forall\,\varphi\in W_{0}^{1,p}(\Omega,w).
\end{align*}

For later use, we introduce the following indexing sets:
\begin{align*}
\begin{cases}
\mathcal{A}=\{(a,b): a>-(n-1),\,b\geq0\},\\
\mathcal{B}=\{(a,b):a>-(n-1),\,b<0,\,a+b>-n\},\\
\mathcal{C}_{q}=\{(a,b):a<(n-1)(q-1),\,b\leq0\},\quad q>1,\\
\mathcal{D}_{q}=\{(a,b):a<(n-1)(q-1),\,b>0,\,a+b<n(q-1)\},\quad q>1,\\
\mathcal{F}=\{(a,b):a+b>-(n-1)\}.
\end{cases}
\end{align*}
The local behavior of solution to problem \eqref{PROBLEM001ZW} near the degenerate or singular point of the anisotropic weight is captured as follows.
\begin{theorem}\label{THM001Z}
For $n\geq2$, $(\theta_{1},\theta_{2})\in(\mathcal{A}\cup\mathcal{B})\cap(\mathcal{C}_{q}\cup\mathcal{D}_{q})\cap\mathcal{F}$, $1<q<p<n+\theta_{1}+\theta_{2}$, let $u$ be a weak solution of problem \eqref{PROBLEM001ZW} with $\Omega=B_{1}$. Then there exists a constant $0<\alpha<1$ depending only on $n,p,q,\theta_{1},\theta_{2},\lambda,$ such that
\begin{align}\label{MAIN0011Z}
u(x)=u(0)+O(1)|x|^{\alpha},\quad\text{for all}\;x\in B_{1/2},
\end{align}
where $O(1)$ denotes some quantity satisfying that $|O(1)|\leq C=C(n,p,q,\theta_{1},\theta_{2},\lambda,\overline{M}).$
\end{theorem}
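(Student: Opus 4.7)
The plan is to combine the anisotropic weighted Poincar\'e inequality from Section 2 with a De Giorgi type truncation/iteration scheme, run on dyadic balls shrinking to the origin, in order to upgrade $L^\infty$ control of $u$ (guaranteed by the bound $0 \le u \le \overline M$) to an oscillation decay estimate of the form
\begin{equation*}
\operatorname*{osc}_{B_{r/2}} u \;\le\; \gamma \operatorname*{osc}_{B_{r}} u, \qquad 0 < \gamma < 1,
\end{equation*}
uniformly in $r \in (0, 1/2]$, with $\gamma$ depending only on the listed constants. Once such a decay is available, a standard iteration on the sequence $r_k = 2^{-k}$ yields \eqref{MAIN0011Z} with exponent $\alpha = \log_2(1/\gamma) \in (0,1)$. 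Note that the equation $\operatorname{div}(Aw|\nabla u|^{p-2}\nabla u)=0$ is invariant under the rescaling $u \mapsto (u(rx)-c)/\operatorname{osc}_{B_r} u$, because the homogeneity $w(rx) = r^{\theta_1+\theta_2}w(x)$ factors out, so it suffices to prove a single quantitative step on $B_1$ and iterate.

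The first technical ingredient is a weighted Caccioppoli inequality, obtained by testing the equation against $\varphi = \eta^p (u-k)_\pm$ for suitable cutoffs $\eta$ and levels $k$; ellipticity \eqref{WQ090} gives the standard energy bound
\begin{equation*}
\int_{B_{r/2}} |\nabla (u-k)_\pm|^p w\, dx \;\le\; C r^{-p} \int_{B_r} (u-k)_\pm^p w\, dx.
\end{equation*}
The second, and the heart of the matter, is the application of the anisotropic weighted Poincar\'e inequality (established in Section \ref{SEC002}) on these truncations. Because that inequality is naturally a $(p,q)$ type with $q<p$ and requires $(\theta_1,\theta_2) \in (\mathcal A \cup \mathcal B) \cap (\mathcal C_q \cup \mathcal D_q) \cap \mathcal F$, we obtain an improvement of integrability that feeds the De Giorgi level-set iteration: if the measure (computed against $w\,dx$) of $\{u > k\}$ in $B_r$ is small relative to $w(B_r)$, the truncations can be driven to zero on a smaller ball.

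Carrying this out gives the key measure-to-pointwise step in the classical form: there exist $\delta_0 = \delta_0(n,p,q,\theta_1,\theta_2,\lambda) \in (0,1)$ such that, with $M_r = \sup_{B_r} u$, $m_r = \inf_{B_r} u$, and $\omega_r = M_r - m_r$,
\begin{equation*}
\frac{\int_{B_r} \mathbf 1_{\{u \ge (M_r+m_r)/2\}}\, w\, dx}{\int_{B_r} w\, dx} \;\le\; \frac12
\quad\Longrightarrow\quad
\sup_{B_{r/2}} u \le M_r - \delta_0 \omega_r,
\end{equation*}
and symmetrically with $u$ replaced by $-u$; in either case we get oscillation decay $\omega_{r/2} \le (1-\delta_0/2)\omega_r$. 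Iterating gives \eqref{MAIN0011Z}. The main obstacle will be controlling the interplay between the anisotropy of $w = |x'|^{\theta_1}|x|^{\theta_2}$ and the De Giorgi machinery: the weight is not an $A_p$ weight in the standard Muckenhoupt sense because it is degenerate (or singular) along the codimension-one subspace $\{x' = 0\}$ in addition to the origin, so measure comparisons such as $w(B_{r/2}) \gtrsim w(B_r)$ and the validity of the Poincar\'e inequality rest delicately on the conditions $(\theta_1,\theta_2) \in (\mathcal A \cup \mathcal B)\cap (\mathcal C_q\cup\mathcal D_q)\cap \mathcal F$, which must be used crucially at every application of Poincar\'e to keep the constants uniform along the dyadic shrinking.
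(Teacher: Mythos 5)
Your overall architecture matches the paper's: Caccioppoli inequality for truncations, a De Giorgi level-set argument on balls centered at the origin, oscillation decay $\omega(R/2)\le(1-\delta)\omega(R)$, and dyadic iteration. However, there is a genuine gap in the mechanism you propose for the key measure-to-pointwise implication. You attribute the ``improvement of integrability'' that drives the truncations to zero to the anisotropic weighted Poincar\'e inequality. The Poincar\'e inequality \eqref{DQ003} has the same exponent $q$ on both sides and therefore yields no gain of integrability; it cannot close the geometric (superlinear) recursion $F_{i+1}\le C^i F_i^{\chi}$ with $\chi>1$ that the De Giorgi iteration requires. That step in the paper (Lemma \ref{LEM0035ZZW}) instead rests on the anisotropic Caffarelli--Kohn--Nirenberg \emph{Sobolev} inequality of Li--Yan, $\|v\|_{L^{p\chi}(B_R,w)}\le C\|\nabla v\|_{L^{p}(B_R,w)}$ with $\chi=\frac{n+\theta_1+\theta_2}{n+\theta_1+\theta_2-p}>1$, which is where the restriction $p<n+\theta_1+\theta_2$ enters. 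The Poincar\'e inequality plays a different and separate role: through the isoperimetric inequalities of Proposition \ref{prop002} it gives the slow decay $|\{u>M-\varepsilon 2^{-j}\}|_\mu/|B_R|_\mu\le C\gamma^{-1/q}j^{-(p-q)/(pq)}$ (Lemma \ref{lem005ZZW}), which converts the hypothesis ``relative measure $\le 1/2$ at the median level'' into ``relative measure $\le\tau_0$ at the level $M-\omega 2^{-k_0}$'' after finitely many halvings. Your single asserted implication with a fixed $\delta_0$ is true, but it is the conjunction of these two lemmas, and the proof you sketch for it would fail because the Poincar\'e inequality alone supplies neither ingredient of the required strength.

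A secondary but substantive error: you claim the weight is ``not an $A_p$ weight in the standard Muckenhoupt sense.'' The paper's Theorem \ref{THM000Z} proves precisely the opposite: under $(\theta_1,\theta_2)\in(\mathcal A\cup\mathcal B)\cap(\mathcal C_q\cup\mathcal D_q)$ the weight $w=|x'|^{\theta_1}|x|^{\theta_2}$ \emph{is} an $A_q$ weight, and this is the only route by which the Poincar\'e inequality you invoke is obtained (via the Heinonen--Kilpel\"ainen--Martio theory). If you genuinely believed $w\notin A_q$, you would have no access to the Poincar\'e inequality on which your argument depends, so this is not merely a side remark but an inconsistency in the logical foundation of your proposal.
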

\begin{remark}\label{RE03}
If the considered domain $B_{1}$ is replaced with $B_{R_{0}}$ for any given $R_{0}>0$ in Theorems \ref{THM001Z} and \ref{THM002}, then by applying their proofs with minor modification, we obtain that \eqref{MAIN0011Z} and \eqref{Z010} also hold with $B_{1/2}$ replaced by $B_{R_{0}/2}$. In this case the constant $C$ will depend on $R_{0}$, but the index $\alpha$ not.

\end{remark}

\begin{remark}\label{WQAN016}
The result in Theorem \ref{THM001Z} can be extended to general degenerate elliptic equations as follows:
\begin{align*}
\begin{cases}
\mathrm{div}(\mathcal{G}(x,\nabla u))=0,& \mathrm{in}\;B_{R_{0}},\\
0\leq u\leq\overline{M}<\infty,&\mathrm{in}\;B_{R_{0}},
\end{cases}
\end{align*}
where $R_{0}>0$, $\mathcal{G}:B_{R_{0}}\times\mathbb{R}^{n}\rightarrow\mathbb{R}^{n}$ is a Carath\'{e}odory function such that for a.e. $x\in B_{R_{0}}$ and any $\xi\in\mathbb{R}^{n}$, there holds
\begin{align*}
\lambda^{-1}w(x)|\xi|^{p}\leq\mathcal{G}(x,\xi)\cdot\xi,\quad|\mathcal{G}(x,\xi)|\leq\lambda w(x)|\xi|^{p-1},\quad w(x)=|x'|^{\theta_{1}}|x|^{\theta_{2}}.
\end{align*}
Here $\lambda\geq1$, $(\theta_{1},\theta_{2})\in(\mathcal{A}\cup\mathcal{B})\cap(\mathcal{C}_{q}\cup\mathcal{D}_{q})\cap\mathcal{F}$, $1<q<p<n+\theta_{1}+\theta_{2}$. In fact, it only needs to slightly modify the proof of Lemma \ref{lem003ZZW} below for the purpose of achieving this generalization.

\end{remark}

When $\theta_{1}=0$, the above weight becomes a single power-type weight. In this case, we establish the H\"{o}lder estimates as follows.
\begin{theorem}\label{THM002}
For $n\geq2$, $\theta_{1}=0,\,\theta_{2}>-(n-1)$, $1<p<n+\theta_{2}$, let $u$ be a bounded weak solution of problem \eqref{PROBLEM001ZW} with $\Omega=B_{1}$. Then there exist a small constant $0<\alpha=\alpha(n,p,\theta_{2},\lambda)<1$ and a large constant $0<C=C(n,p,\theta_{2},\lambda,\overline{M})$ such that
\begin{align}\label{Z010}
|u(x)-u(y)|\leq C|x-y|^{\alpha},\quad\text{for all}\;x,y\in B_{1/2}.
\end{align}

\end{theorem}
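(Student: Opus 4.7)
The plan is to combine H\"{o}lder continuity at the origin, supplied by Theorem \ref{THM001Z}, with interior H\"{o}lder continuity away from the origin, supplied by classical theory, and then to interpolate between them by a standard two-point argument. For the first ingredient I would specialize Theorem \ref{THM001Z} to $(\theta_1,\theta_2)=(0,\theta_2)$. Since $\theta_2>-(n-1)>-n$, the pair lies in $\mathcal{A}\cup\mathcal{B}$ and in $\mathcal{F}$; for the $q$-dependent condition I would pick $q\in(1,p)$ close to $1$ when $\theta_2\leq 0$ (placing $(0,\theta_2)$ in $\mathcal{C}_q$), or any $q\in(1+\theta_2/n,\,p)$ when $\theta_2>0$ (placing it in $\mathcal{D}_q$, which requires $\theta_2<n(p-1)$). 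Theorem \ref{THM001Z} then delivers the pointwise bound
\begin{equation*}
|u(x)-u(0)|\leq C|x|^{\alpha_1},\qquad x\in B_{1/2},
\end{equation*}
for some $\alpha_1\in(0,1)$ depending only on $n,p,\theta_2,\lambda$.

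For the interior piece, fix $x_0\in B_{1/2}\setminus\{0\}$ and let $r=|x_0|$. On $B_{r/4}(x_0)$ the weight $|x|^{\theta_2}$ is pinched between two multiples of $r^{\theta_2}$, so after rescaling $v(y)=u(x_0+(r/4)y)$ for $y\in B_1$ and dividing through by $r^{\theta_2}$, the equation for $v$ is a standard uniformly elliptic $p$-Laplace-type equation whose weight is bounded above and below by positive constants depending only on $\theta_2$. Classical interior H\"{o}lder regularity for such equations (in the spirit of \cite{L2019,S1964,T1967}) gives oscillation decay
\begin{equation*}
\mathrm{osc}_{B_{\rho r}(x_0)}u\leq C\rho^{\alpha_0}\,\mathrm{osc}_{B_{r/4}(x_0)}u,\qquad 0<\rho\leq 1/8,
\end{equation*}
with $\alpha_0\in(0,1)$ independent of $x_0$ and $r$. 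To combine the two, let $x,y\in B_{1/2}$, $d=|x-y|$, $r=\max(|x|,|y|)$. If $d\geq r/8$, then the triangle inequality through the origin and the first estimate give $|u(x)-u(y)|\leq 2Cr^{\alpha_1}\leq C'd^{\alpha_1}$. If $d<r/8$, take $x_0$ to be whichever of $x,y$ realizes $r$; then both points lie in $B_d(x_0)\subset B_{r/8}(x_0)$, and since $B_{r/4}(x_0)\subset B_{2r}$ the first estimate bounds $\mathrm{osc}_{B_{r/4}(x_0)}u\leq Cr^{\alpha_1}$. Applying the oscillation decay with $\rho=d/r$ yields $|u(x)-u(y)|\leq Cd^{\alpha_0}r^{\alpha_1-\alpha_0}$. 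Choosing $\alpha=\min(\alpha_0,\alpha_1)$ and using $r\leq 1/2$ then concludes the argument in both cases.

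The main obstacle is validating the first step over the full admissible range of $(p,\theta_2)$. In the regime $\theta_2>0$ with $\theta_2\geq n(p-1)$, no $q\in(1,p)$ places $(0,\theta_2)$ in $\mathcal{D}_q$, so Theorem \ref{THM001Z} does not directly apply. To cover that sub-regime I would revisit the De Giorgi iteration underlying Theorem \ref{THM001Z}, exploiting the isotropic structure of $w=|x|^{\theta_2}$ to relax the $q$ constraint, or else run a direct radial De Giorgi argument based on the isotropic specialization of the weighted Poincar\'{e} inequality developed in Section \ref{SEC002}.
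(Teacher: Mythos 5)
Your proposal follows essentially the same route as the paper: H\"older continuity at the origin via the De Giorgi machinery behind Theorem \ref{THM001Z}, interior H\"older continuity away from the origin by rescaling the now-isotropic weight to a uniformly degenerate elliptic equation, and a two-point interpolation (the paper splits at $|x-\tilde x|\lessgtr R^{2}$ with $R=|x|$ rather than at $d\lessgtr r/8$, but both work). The obstacle you flag for $\theta_{2}\geq n(p-1)$ is resolved exactly as you suggest: the isotropic clause $\{\theta_{1}=0,\ \theta_{2}>-n\}$ of the weighted Poincar\'e inequality (Corollary \ref{QWZM090}) holds for every $1<q<\infty$, and the hypotheses of Lemma \ref{lem005ZZW} and Corollary \ref{AMZW01ZZW} already include the case $\{\theta_{1}=0,\ \theta_{2}>-(n-1)\}$ separately (with $q=(p+1)/2$ fixed there), so the proof of Theorem \ref{THM001Z} goes through on the full range.
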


Observe that when $\theta_{1}=0$, the equation in \eqref{PROBLEM001ZW} will become degenerate elliptic equation in any domain away from the origin, then we can directly establish its H\"{o}lder regularity in these regions by using the interior H\"{o}lder estimates for degenerate elliptic equation. This, in combination with Remark \ref{RE03} and Theorem \ref{THM002}, gives the following corollary.
\begin{corollary}
For $n\geq2$, $\theta_{1}=0,\,\theta_{2}>-(n-1)$, $1<p<n+\theta_{2}$, let $u$ be a weak solution of problem \eqref{PROBLEM001ZW}. Then $u$ is locally H\"{o}lder continuous in $\Omega$, that is, for any compact subset $K\subset\Omega$, there exists two constants $0<\alpha=\alpha(n,p,\theta_{2},\lambda)<1$ and $C=C(\mathrm{dist}(K,\partial\Omega),\mathrm{dist}(0,\partial\Omega),n,p,\theta_{2},\lambda,\overline{M})>0$ such that \eqref{Z010} holds with $B_{1/2}$ replaced by $K$.
\end{corollary}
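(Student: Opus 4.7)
The plan is to split any compact $K \subset \Omega$ into a portion near the origin, where Theorem \ref{THM002} (combined with Remark \ref{RE03}) applies directly, and a portion bounded away from $0$, where the weight $w(x) = |x|^{\theta_{2}}$ is smooth and uniformly positive so that the equation reduces to an unweighted $p$-Laplace type equation governed by classical regularity theory. A covering and triangle-inequality argument then patches the two estimates into the claimed local H\"{o}lder continuity on $K$.

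First I would set $R_{0} = \tfrac{1}{2}\mathrm{dist}(0,\partial\Omega)$, so that $B_{R_{0}} \subset \Omega$, and invoke Remark \ref{RE03} applied to Theorem \ref{THM002} to obtain
\[
|u(x)-u(y)| \le C_{1} |x-y|^{\alpha_{1}}, \qquad x,y \in B_{R_{0}/2},
\]
with $\alpha_{1} = \alpha(n,p,\theta_{2},\lambda) \in (0,1)$ and $C_{1}$ depending on $R_{0},n,p,\theta_{2},\lambda,\overline{M}$. Next, for any $x_{0} \in \Omega \setminus \{0\}$, I would select a ball $B_{r}(x_{0})$ compactly contained in $\Omega \setminus \{0\}$ with $r$ comparable to $|x_{0}|$; on such a ball $w$ is smooth and satisfies $0 < c_{1} \le w \le c_{2} < \infty$, so setting $\widetilde{A}(x) = w(x) A(x)$ yields a bounded measurable, uniformly elliptic matrix, and the weak formulation of the equation becomes the standard $p$-Laplace type equation $\operatorname{div}(\widetilde{A} |\nabla u|^{p-2}\nabla u) = 0$. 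The classical interior H\"{o}lder regularity for such equations (see, e.g., \cite{L2019,S1964,T1967}) then delivers
\[
|u(x)-u(y)| \le C_{2} |x-y|^{\alpha_{2}}, \qquad x, y \in B_{r/2}(x_{0}),
\]
with $\alpha_{2} \in (0,1)$ depending only on $n,p,\theta_{2},\lambda$ (through $n,p$ and the ellipticity ratio of $\widetilde{A}$, the latter being controlled by $\lambda$ and $\theta_{2}$ once $r \sim |x_{0}|$) and $C_{2}$ depending additionally on $|x_{0}|$ and $\overline{M}$.

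To assemble the estimates on $K$, I would cover $K$ by $B_{R_{0}/2}$ together with finitely many balls $B_{r/2}(x_{0})$ of the second type, set $\alpha = \min(\alpha_{1},\alpha_{2})$, and choose $\rho > 0$ (depending on $\mathrm{dist}(K,\partial\Omega)$ and $\mathrm{dist}(0,\partial\Omega)$) small enough that every pair $x,y \in K$ with $|x-y| < \rho$ lies inside a single ball from the cover; applying the appropriate local estimate then yields an $\alpha$-H\"{o}lder bound on such pairs. For $|x-y| \ge \rho$ the crude bound $|u(x)-u(y)| \le 2\overline{M} \le 2\overline{M}\rho^{-\alpha}|x-y|^{\alpha}$ closes the argument.

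The main point to verify carefully is the away-from-origin reduction: one has to check that, after absorbing the smooth weight $w$ into the coefficient matrix on each ball $B_{r}(x_{0}) \subset \Omega \setminus \{0\}$, the resulting H\"{o}lder exponent supplied by the classical theory is independent of $R_{0}$, $|x_{0}|$ and $r$ (so that a uniform $\alpha_{2}$ works for all covering balls). This forces the choice $r \sim |x_{0}|$, which keeps the effective ellipticity ratio of $\widetilde{A}$ bounded in terms of $\lambda$ and $\theta_{2}$ alone. Once this bookkeeping is in place, the two-scale patching via the triangle inequality is routine and the final constant $C$ has exactly the declared dependence.
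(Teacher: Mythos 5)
Your proposal is correct and follows essentially the same route as the paper: the Hölder estimate near the origin comes from Theorem \ref{THM002} together with Remark \ref{RE03}, while away from the origin the weight is absorbed into the coefficients so that classical interior Hölder estimates for degenerate ($p$-Laplace type) equations apply, and the two regimes are patched by a standard covering argument. Your extra care about choosing $r\sim|x_{0}|$ to keep the ellipticity ratio, and hence the exponent, uniform is the right bookkeeping and matches the rescaling already used in the paper's proof of Theorem \ref{THM002}.
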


\subsection{The nonlinear parabolic equations with anisotropic weights}
Let $0\in\Omega\subset\mathbb{R}^{n}$, $n\geq2$ be defined as above. The second problem of interest is concerned with studying the local regularity of solution to the weighted nonlinear parabolic equation as follows:
\begin{align}\label{PROBLEM001}
\begin{cases}
w_{1}\partial_{t}u^{p}-\mathrm{div}(Aw_{2}\nabla u)=0,& \mathrm{in}\;\Omega_{T},\\
0<\overline{m}\leq u\leq\overline{M}<\infty,&\mathrm{in}\;\Omega_{T},
\end{cases}
\end{align}
where $\Omega_{T}=\Omega\times(-T,0]$, $T>0$, $w_{1}=|x'|^{\theta_{1}}|x|^{\theta_{2}}$, $w_{2}=|x'|^{\theta_{3}}|x|^{\theta_{4}}$, $p\geq1$, the ranges of $\theta_{i}$, $i=1,2,3,4$ are prescribed in the following theorems, $\overline{m}$ and $\overline{M}$ are two given positive constants, the symmetric matrix $A=(a_{ij}(x))_{n\times n}$ satisfies the uniformly elliptic condition in \eqref{WQ090}. When $\theta_{i}=0$, $i=1,2,3,4$ and $A=I$, equation in \eqref{PROBLEM001} becomes fast diffusion equation, whose relevant mathematical problem is modeled by
\begin{align}\label{PROBLEM009}
\begin{cases}
\partial_{t}u^{p}-\Delta u=0,&\mathrm{in}\;\Omega\times(0,\infty),\\
u=0,&\mathrm{on}\;\partial\Omega\times(0,\infty),\\
u(x,0)=u_{0}(x)\geq0.&
\end{cases}
\end{align}
In physics, equation in \eqref{PROBLEM009} can be used to describe fast diffusion phenomena occurring in gas-kinetics, plasmas and thin liquid film
dynamics. For more related applications and physical explanations, see \cite{DK2007,V2007} and the references therein.

For problem \eqref{PROBLEM009}, it is well known that when $u_{0}(x)\not\equiv0$, there exists a finite extinction time $T^{\ast}>0$ such that $u(\cdot,t)\equiv0$ in $\Omega$ if $t\in[T^{\ast},\infty)$ and $u(\cdot,t)>0$ in $\Omega$ if $t\in(0,T^{\ast})$. This, together with the continuity of $u$ (see Chen-DiBenedetto \cite{CD1988}), indicates that for any $U\subset\subset\Omega\times(0,T^{\ast})$, there exist two positive constants $\overline{m}$ and $\overline{M}$ such that $0<\overline{m}\leq u\leq\overline{M}<\infty$ for $(x,t)\in U$. This fact motivates our investigation on the local regularity of weak solution for the corresponding weighted problem \eqref{PROBLEM001}. In particular, it can be called the weighted fast diffusion equation when $A=I$ in \eqref{PROBLEM001}. For the fast diffusion problem \eqref{PROBLEM009}, the regularity of solution and its asymptotic behavior near extinction time have been extensively studied, for example, see \cite{JX2019,JX2022,CD1988,S1983,K1988,BV2010,DK2007,DK1992,DGV2012,DKV1991} for the regularity and \cite{BH1980,BGV2012,FS2000,BF2021,A2021} for the extinction behavior, respectively. In particular, Jin and Xiong recently established a priori H\"{o}lder estimates for the solution to a weighted nonlinear parabolic equation in Theorem 3.1 of \cite{JX2022}, which is critical to the establishment of optimal global regularity for fast diffusion equation with any $1<p<\infty$. Their results especially answer the regularity problem proposed by Berryman and Holland \cite{BH1980}. It is worth pointing out that the degeneracy of weight in \cite{JX2022} is located at the boundary. By contrast, the degeneracy or singularity of the weights considered in this paper lies in the interior.  This will lead to some distinct differences in terms of the establishments of H\"{o}lder estimates under these two cases. Moreover, since the weights considered in this paper take more sophisticated forms comprising two power-type weights of different dimensions, it greatly increases the difficulties of analyses and calculations. With regard to the regularity for weighted parabolic problem in the case when $p=1$ in \eqref{PROBLEM001}, we refer to \cite{CS1984,GW1991,S2010} and the references therein.

The weighted $L^{p}$ space and weighted Sobolev spaces with respect to space variable have been defined above. Similarly, for a weight $w$, let $W^{1,p}(\Omega_{T},w)$ represent the corresponding weighted Sobolev spaces in $(x,t)$ with its norm given by
\begin{align*}
\|u\|_{W^{1,p}(\Omega_{T},w)}=\left(\int_{\Omega_{T}}|u|^{p}wdxdt\right)^{\frac{1}{p}}+\left(\int_{\Omega_{T}}(|\partial_{t}u|^{p}+|\nabla u|^{p})dxdt\right)^{\frac{1}{p}}.
\end{align*}
We say that $u\in W^{1,2}(\Omega_{T},w_{2})$ is a weak solution of problem \eqref{PROBLEM001} if
\begin{align*}
\int^{t_{2}}_{t_{1}}\int_{\Omega}(w_{1}\partial_{t}u^{p}\varphi+Aw_{2}\nabla u\nabla\varphi)dxdt=0,
\end{align*}
for any $-T\leq t_{1}<t_{2}\leq0$ and $\varphi\in C^{1}(\Omega_{T})$ which vanishes on $\partial\Omega\times(-T,0)$.

Introduce the following index conditions:
\begin{itemize}
{\it
\item[(\bf{S1})] let $n\geq4$ and $1+2/(n-1)<q<2$, if $(\theta_{1},\theta_{2})\in(\mathcal{A}\cup\mathcal{B})\cap\mathcal{C}_{q}$;
\item[(\bf{S2})] let $n\geq3$ and $1+2/n<q<2$, if $(\theta_{1},\theta_{2})\in\mathcal{A}\cap\mathcal{D}_{q}$.}
\end{itemize}
For the local behavior of solution to problem \eqref{PROBLEM001}, we have
\begin{theorem}\label{ZWTHM90}
Suppose that $p\geq1$, $(\theta_{1},\theta_{2})$ satisfies condition $\mathrm{(}\mathbf{S}1\mathrm{)}$ or $\mathrm{(}\mathbf{S}2\mathrm{)}$, $(\theta_{3},\theta_{4})\in\mathcal{A}\cup\mathcal{B},$ $\theta_{1}+\theta_{2}\geq\theta_{3}+\theta_{4}=2$, $\theta_{1}/\theta_{3}=\theta_{2}/\theta_{4}$, $\theta_{3},\theta_{4}\neq0$. Let $u$ be a weak solution of problem \eqref{PROBLEM001} with $\Omega\times(-T,0]=B_{1}\times(-1,0]$. Then there exists a small constant $0<\alpha=\alpha(n,p,q,\theta_{1},\theta_{2},\theta_{3},\lambda,\overline{m},\overline{M})<1$ such that for any $t_{0}\in (-1/4,0)$,
\begin{align}\label{QNAW001ZW}
u(x,t)=u(0,t_{0})+O(1)\big(|x|+\sqrt[\theta_{1}+\theta_{2}]{|t-t_{0}|}\big)^{\alpha},\;\,\forall\, (x,t)\in B_{1/2}\times(-1/4,t_{0}],
\end{align}
where $O(1)$ satisfies that $|O(1)|\leq C=C(n,p,q,\theta_{1},\theta_{2},\theta_{3},\lambda,\overline{m},\overline{M}).$
\end{theorem}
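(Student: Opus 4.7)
The plan is to adapt the parabolic De Giorgi iteration to the anisotropic weighted setting. The first step is to pin down the natural scaling. Since $\theta_{3}+\theta_{4}=2$, the transformation $(x,t)\mapsto(\rho x,\rho^{\theta_{1}+\theta_{2}}t)$ preserves the structure of the PDE in \eqref{PROBLEM001}; this is the source of the anisotropic modulus $|x|+|t-t_{0}|^{1/(\theta_{1}+\theta_{2})}$ appearing in \eqref{QNAW001ZW}. Accordingly, I would work on the family of intrinsic cylinders $Q_{\rho}(x_{0},t_{0})=B_{\rho}(x_{0})\times(t_{0}-\rho^{\theta_{1}+\theta_{2}},t_{0}]$ and aim to prove an oscillation-decay estimate on them.

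Next, I would derive the fundamental energy estimate. Since $u\geq\overline{m}>0$, one may rewrite $w_{1}\partial_{t}u^{p}=pw_{1}u^{p-1}\partial_{t}u$, so the time-derivative coefficient is bounded and bounded away from zero. Testing the weak formulation with $\eta^{2}(u-k)_{\pm}$ for a smooth space-time cutoff $\eta$ adapted to a pair of concentric cylinders $Q_{\rho/2}\subset Q_{\rho}$ gives a Caccioppoli-type inequality
\begin{align*}
\sup_{t}\int_{B_{\rho/2}}w_{1}(u-k)_{\pm}^{2}(x,t)\,dx+\int_{Q_{\rho/2}}w_{2}|\nabla(u-k)_{\pm}|^{2}\,dxdt \\
\leq C\int_{Q_{\rho}}\bigl(w_{2}|\nabla\eta|^{2}+w_{1}|\partial_{t}\eta|\bigr)(u-k)_{\pm}^{2}\,dxdt.
\end{align*}
Combined with the anisotropic weighted Sobolev embedding deduced from the Poincar\'e inequality of Section \ref{SEC002}, this is the engine for the standard two-step De Giorgi argument.

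I would then carry out the two parabolic De Giorgi lemmas. The first asserts that if the $w_{1}$-weighted relative measure of $\{u>k\}\cap Q_{\rho}$ is small enough, then $u\leq k+\sigma/2$ on $Q_{\rho/2}$, where $\sigma$ denotes the oscillation of $u$ on $Q_{\rho}$; this follows from a geometric iteration on levels $k_{j}\nearrow k+\sigma/2$ via the Caccioppoli estimate and the Sobolev embedding. The second is a parabolic expansion-of-positivity lemma: failure of the first alternative at every truncation level produces many time slices on which $\{u<k\}$ has nontrivial $w_{1}$-measure, and positivity on each slice propagates forward in time through the energy estimate. Together they yield
\begin{align*}
\mathop{\mathrm{osc}}\bigl(u,Q_{\rho/2}\bigr)\leq\gamma\,\mathop{\mathrm{osc}}\bigl(u,Q_{\rho}\bigr),\qquad \gamma=\gamma(n,p,q,\theta_{1},\theta_{2},\theta_{3},\lambda,\overline{m},\overline{M})\in(0,1),
\end{align*}
which iterates, after unwinding the anisotropic scaling, to the H\"older estimate \eqref{QNAW001ZW} with exponent $\alpha=\log_{2}(1/\gamma)$.

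The main obstacle is the interaction between the two distinct weights $w_{1}$ and $w_{2}$. Unlike the elliptic situation of Theorem \ref{THM001Z}, the Caccioppoli estimate mixes a $w_{1}$-weighted time-supremum norm with a $w_{2}$-weighted Dirichlet energy, and closing the De Giorgi iteration requires absorbing the former into a slightly higher-integrability consequence of the latter. This is precisely where the quantitative assumptions $\theta_{3}+\theta_{4}=2$, $\theta_{1}+\theta_{2}\geq 2$, and the alignment $\theta_{1}/\theta_{3}=\theta_{2}/\theta_{4}$ enter: they force $w_{1}$ and $w_{2}$ to scale compatibly under $(x,t)\mapsto(\rho x,\rho^{\theta_{1}+\theta_{2}}t)$, so that the anisotropic Sobolev-Poincar\'e inequality of Section \ref{SEC002} is dimensionally consistent with the energy estimate. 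The index conditions $(\mathbf{S}1)$ and $(\mathbf{S}2)$ then leave enough room in the Sobolev exponent $q$ to complete the geometric iteration, and are thus forced by the method.
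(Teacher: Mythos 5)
Your overall strategy --- intrinsic cylinders $B_\rho\times(t_0-\rho^{\theta_1+\theta_2},t_0]$, a Caccioppoli inequality for $(u-k)_\pm$, the anisotropic Sobolev/Poincar\'e inequalities of Section \ref{SEC002}, a critical-mass lemma, a measure-decay lemma, forward-in-time propagation from an initial slice, and the two-alternative oscillation iteration --- is exactly the paper's (Proposition \ref{prop001}, Lemmas \ref{lem003}, \ref{LEM0035}, \ref{lem005}, \ref{LEM006}, Corollary \ref{AMZW01}), including the role you assign to $\theta_3+\theta_4=2$ and $\theta_1/\theta_3=\theta_2/\theta_4$ (the latter is used in the paper via the measure comparison \eqref{QAZ002} between $\nu_{w_1}$ and $\nu_{w_2}$).

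There is, however, one step that would fail as written: your treatment of the term $w_1\partial_t u^p$. Writing it as $pw_1u^{p-1}\partial_t u$ and observing that $u^{p-1}$ is bounded above and below does \emph{not} yield the clean energy inequality you display, because after testing with $\eta^2(u-k)_\pm$ the resulting integrand $u^{p-1}(u-k)_\pm\,\partial_t(u-k)_\pm$ is not the time derivative of $\tfrac12 u^{p-1}(u-k)_\pm^2$ (the factor $u^{p-1}$ itself depends on $t$). The paper instead integrates an exact primitive $\mathcal{H}=\frac{(v+k)^{p+1}}{p+1}-\frac{k(v+k)^{p}}{p}+\frac{k^{p+1}}{p(p+1)}$ and compares it to $\tfrac12 k^{p-1}v^2$ by Taylor expansion, which produces cubic correction terms: see \eqref{ZWZ005} and the two inequalities of Lemma \ref{lem003}. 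For $(u-k)^+$ the error only pollutes the initial-time term, but for $(u-k)^-$ the left-hand side becomes $\tilde v^2-C_0\tilde v^3$, which is coercive only when the truncation increment $\varepsilon$ satisfies $\varepsilon\le C_0^{-1}$. This smallness restriction is not cosmetic: it propagates through part (b) of Lemmas \ref{LEM0035} and \ref{lem005} and into Lemma \ref{LEM006}, and it is why the final iteration must use increments $\omega(R)/\kappa_0$ with $\kappa_0$ chosen large (rather than $\omega(R)/2$ as in the elliptic Theorem \ref{THM001Z}). Your sketch needs this ingredient to close the lower-bound half of the De Giorgi alternative; once it is added, the rest of your outline goes through as in the paper.
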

\begin{remark}
We here give explanations for index conditions $\mathrm{(}\mathbf{S}1\mathrm{)}$ and $\mathrm{(}\mathbf{S}2\mathrm{)}$. Observe that if $(\theta_{1},\theta_{2})\in C_{q}$, $1<q<2$ and $\theta_{1}+\theta_{2}\geq2$, then we have $(n-1)(q-1)>2$, which requires that $n\geq4$ and $q>2/(n-1)+1$. Similarly, if $(\theta_{1},\theta_{2})\in D_{q}$, $1<q<2$ and $\theta_{1}+\theta_{2}\geq2$, it requires that $n\geq3$ and $q>2/n+1$.
\end{remark}
\begin{remark}\label{REM001}
For any fixed $R_{0}>0$, let $B_{R_{0}}\times(-R_{0}^{\theta_{1}+\theta_{2}},0]$ substitute for $B_{1}\times(-1,0]$ in Theorems \ref{ZWTHM90} and \ref{THM060}. Then applying their proofs with a slight modification, we derive that \eqref{QNAW001ZW}--\eqref{MWQ099} still hold with $t_{0}\in(-1/4,0)$, $B_{1/2}\times(-1/4,t_{0}]$ and $B_{1/2}\times(-1/4,0)$ replaced by $t_{0}\in(-R^{\theta_{1}+\theta_{2}}_{0}/4,0)$, $B_{R_{0}/2}\times(-R^{\theta_{1}+\theta_{2}}_{0}/4,t_{0}]$ and $B_{R_{0}/2}\times(-R^{\theta_{2}}_{0}/4,0)$, respectively. A difference lies in that the constant $C$ will depend on $R_{0}$, but $\alpha$ not.

\end{remark}

In the case of $\theta_{1}=\theta_{3}=0$, $w_{1}$ and $w_{2}$ become single power-type weight. Then we have
\begin{theorem}\label{THM060}
For $p\geq1$, $n\geq2$, $\theta_{1}=\theta_{3}=0$, $\theta_{2}\geq\theta_{4}=2$, let $u$ be a weak solution of problem \eqref{PROBLEM001} with $\Omega\times(-T,0]=B_{1}\times(-1,0]$. Then there exist two constants $0<\alpha<1$ and $C>0$, both depending only on $n,p,\theta_{2},\lambda,\overline{m},\overline{M},$ such that
\begin{align}\label{MWQ099}
|u(x,t)-u(y,s)|\leq C\big(|x-y|+\sqrt[\theta_{2}]{|t-s|}\big)^{\alpha},
\end{align}
for any $(x,t),(y,s)\in B_{1/2}\times(-1/4,0).$
\end{theorem}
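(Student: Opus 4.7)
The plan is to combine a pointwise Hölder estimate at the singular point $x=0$ with the classical interior Hölder estimate for uniformly parabolic equations at points away from the origin, and then patch the two together. The equation $|x|^{\theta_{2}}\partial_{t}u^{p}-\mathrm{div}(A|x|^{2}\nabla u)=0$ is invariant under the rescaling $(x,t)\mapsto(rx,r^{\theta_{2}}t)$, so the natural parabolic cylinders are $Q_{r}(x_{0},t_{0})=B_{r}(x_{0})\times(t_{0}-r^{\theta_{2}},t_{0}]$, and the intrinsic distance is $|x-y|+|t-s|^{1/\theta_{2}}$, which matches \eqref{MWQ099}.

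\medskip

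\noindent\textbf{Step 1 (Hölder at the origin).} For each $t_{0}\in(-1/4,0)$ I would establish an oscillation decay $\mathrm{osc}_{Q_{r/2}(0,t_{0})}u\leq\gamma\,\mathrm{osc}_{Q_{r}(0,t_{0})}u$ with some $\gamma\in(0,1)$, uniform in small $r$. Following the De Giorgi truncation scheme that underlies the proof of Theorem \ref{ZWTHM90}, I would test the equation against $(u-k)_{\pm}\eta^{2}$ with $\eta$ a cutoff adapted to $Q_{r}$, exploit $\partial_{t}u^{p}=pu^{p-1}\partial_{t}u$ together with $u\in[\overline{m},\overline{M}]$ to compare the time term to $|x|^{\theta_{2}}\partial_{t}u$ (up to constants depending on $p,\overline{m},\overline{M}$), and derive a weighted Caccioppoli inequality with time weight $|x|^{\theta_{2}}$ and spatial weight $|x|^{2}$. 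Since $\theta_{1}=\theta_{3}=0$ and $\theta_{2},2>-n$, both $|x|^{\theta_{2}}$ and $|x|^{2}$ are Muckenhoupt $A_{2}$ weights, so the anisotropic weighted Sobolev/Poincar\'{e} inequalities of Section \ref{SEC002} apply on $Q_{r}$ and drive a standard De Giorgi iteration on the level sets of $u$, producing the decay.

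\medskip

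\noindent\textbf{Step 2 (Away from the origin, and patching).} On any cylinder $Q_{\rho}(x_{0},t_{0})$ with $\rho\leq|x_{0}|/4$, both weights $|x|^{\theta_{2}}$ and $|x|^{2}$ are comparable to the positive constants $|x_{0}|^{\theta_{2}}$ and $|x_{0}|^{2}$. Dividing the equation by $|x_{0}|^{\theta_{2}}$ yields a uniformly parabolic divergence-form equation in $u$ with ellipticity constants depending only on $n,p,\theta_{2},\lambda,\overline{m},\overline{M}$, to which the classical De Giorgi--Nash--Moser Hölder estimate applies, giving an exponent $\alpha'=\alpha'(n,p,\theta_{2},\lambda,\overline{m},\overline{M})$. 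To patch, given $(x,t),(y,s)\in B_{1/2}\times(-1/4,0)$ set $d=|x-y|+|t-s|^{1/\theta_{2}}$ and $\rho=\max(|x|,|y|)$: if $d<\rho/2$, both points lie in a common cylinder controlled by Step 2; otherwise interpose $(0,\min(t,s))$ and apply Step 1 twice. Taking the smaller of the two Hölder exponents gives the $\alpha$ in \eqref{MWQ099}.

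\medskip

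\noindent\textbf{Main obstacle.} The core technical work lies in Step 1. The mismatch between the time exponent $\theta_{2}\geq 2$ and the spatial exponent $2$ forces the non-standard parabolic scaling $t\sim|x|^{\theta_{2}}$, and every energy and iteration estimate must be carried out on elongated cylinders of size $r\times r^{\theta_{2}}$ with careful tracking of the weight-dependent constants in the Sobolev embeddings. The $u^{p-1}$ factor in the time derivative complicates the algebra but is harmless because $u$ stays strictly between $\overline{m}$ and $\overline{M}$.
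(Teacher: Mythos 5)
Your overall strategy coincides with the paper's: a pointwise H\"older estimate centered at the origin obtained by the De Giorgi scheme (the paper gets this by running the proof of Theorem \ref{ZWTHM90} with $\theta_{1}=\theta_{3}=0$), plus the classical interior estimate away from the origin, plus a patching argument. The genuine gap is in your patching. On a cylinder at distance $\rho$ from the origin the equation is uniformly parabolic only after the anisotropic rescaling $u_{\rho}(y,s)=u(\rho y,\rho^{\theta_{2}}s)$ (simply dividing by $|x_{0}|^{\theta_{2}}$ leaves a diffusion coefficient of size $|x_{0}|^{2-\theta_{2}}$, which degenerates when $\theta_{2}>2$), and the interior De Giorgi--Nash--Moser estimate for the rescaled solution gives
\begin{align*}
|u(x,t)-u(y,s)|\leq C\Big(\frac{|x-y|}{\rho}+\Big(\frac{|t-s|}{\rho^{\theta_{2}}}\Big)^{1/2}\Big)^{\alpha'},
\end{align*}
i.e.\ a H\"older seminorm that blows up like $\rho^{-\alpha'}$ as $\rho\to0$. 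With your threshold $d<\rho/2$ this only yields the useless bound $C(d/\rho)^{\alpha'}\leq C$, and in the intermediate regime $\rho^{2}\ll d\ll\rho$ neither route as you set it up produces $Cd^{\alpha''}$: the origin route gives only $C\rho^{\alpha}$, and $\rho$ may be far larger than $d$ there. The paper closes this by splitting at $d\leq\rho^{2}$ versus $d>\rho^{2}$ (and $|t-s|\leq\rho^{2\theta_{2}}$ versus $>\rho^{2\theta_{2}}$), paying a factor of $\tfrac12$ in the exponent: $C(d/\rho)^{\alpha'}\leq Cd^{\alpha'/2}$ in the first case and $C\rho^{\alpha}\leq Cd^{\alpha/2}$ in the second. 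An equally valid fix is to feed the Step~1 oscillation decay $\operatorname*{osc}_{Q_{C\rho}(0,\cdot)}u\leq C\rho^{\alpha}$ into the right-hand side of the interior estimate, which turns $C(d/\rho)^{\alpha'}\rho^{\alpha}$ into $Cd^{\min(\alpha,\alpha')}$; either way, some interpolation between the two regimes is indispensable and is missing from your write-up.

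Two smaller points. First, your justification of the weighted Sobolev--Poincar\'e inequalities via ``$|x|^{\theta_{2}}$ is an $A_{2}$ weight'' fails once $\theta_{2}\geq n$ (allowed here, e.g.\ $n=2$, $\theta_{2}=5$); the paper instead invokes Corollary \ref{QWZM090}, which covers $\theta_{1}=0$, $\theta_{2}>-n$ for every $q>1$ through Corollary 15.35 of \cite{HKM2006}, and fixes $q=3/2$ in Corollary \ref{AMZW01}. Second, your Step~1 sketch omits the genuinely parabolic ingredient of the De Giorgi argument, namely the forward-in-time propagation of measure information from a single time slice (Lemma \ref{LEM006}), without which the level-set iteration on the elongated cylinders $B_{r}\times(t_{0}-r^{\theta_{2}},t_{0}]$ cannot be started; this is where most of the work in the paper's proof actually lies.
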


When $\theta_{1}=\theta_{3}=0$ and $\theta_{2}\geq\theta_{4}=2$, the equation in \eqref{PROBLEM001} will be uniformly parabolic in any domain away from the origin. Then we can directly use the interior H\"{o}lder estimates for uniformly parabolic equation to obtain its H\"{o}lder regularity in these regions. This, together with Remark \ref{REM001} and Theorem \ref{THM060}, leads to the following corollary.

\begin{corollary}
For $p\geq1$, $n\geq2$, $\theta_{1}=\theta_{3}=0$, $\theta_{2}\geq\theta_{4}=2$, let $u$ be a weak solution of problem \eqref{PROBLEM001}. Then $u$ is locally H\"{o}lder continuous in $\Omega\times(-T,0)$, that is, for any compact subset $K\subset\Omega\times(-T,0)$, there exist a small constant $0<\alpha=\alpha(n,p,\theta_{2},\lambda,\overline{m},\overline{M})<1$ and a large constant $C=C(\mathrm{dist}(K,\partial(\Omega\times(-T,0))),\mathrm{dist}(0,\partial\Omega),n,p,\theta_{2},\lambda,\overline{m},\overline{M})>0$ such that \eqref{MWQ099} holds with $K$ substituting for $B_{1/2}\times(-1/4,0)$.
\end{corollary}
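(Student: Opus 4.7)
The plan is to split the compact set $K\subset\Omega\times(-T,0)$ into a part staying a fixed positive distance from the singular line $\{0\}\times(-T,0)$ and a part meeting it, handling them respectively by classical interior parabolic regularity and by the rescaled version of Theorem \ref{THM060} provided by Remark \ref{REM001}. Set $d_{0}=\mathrm{dist}(0,\partial\Omega)$, $d_{1}=\mathrm{dist}(K,\partial(\Omega\times(-T,0)))$, choose $R_{0}=\min\{d_{0}/2,(d_{1}/4)^{1/\theta_{2}}\}$, and let $\rho=R_{0}/2$. Decompose $K=K_{r}\cup K_{s}$ with $K_{r}:=K\cap\{|x|\geq\rho\}$ and $K_{s}:=K\cap\{|x|<\rho\}$.

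On $K_{r}$ the weights $w_{1}=|x|^{\theta_{2}}$ and $w_{2}=|x|^{2}$ are smooth and pinched between positive constants depending only on $\rho$ and the diameter of $\Omega$. Rewriting the equation in \eqref{PROBLEM001} as $pu^{p-1}w_{1}\partial_{t}u=\mathrm{div}(Aw_{2}\nabla u)$ and using $\overline{m}\leq u\leq\overline{M}$ turns it into a uniformly parabolic linear equation in divergence form with bounded measurable coefficients. The De Giorgi--Nash--Moser theory then yields an exponent $\alpha_{0}=\alpha_{0}(n,p,\theta_{2},\lambda,\overline{m},\overline{M})\in(0,1)$ and a constant $C_{1}$ depending also on $\rho$ and $d_{1}$ such that
\begin{align*}
|u(x,t)-u(y,s)|\leq C_{1}\big(|x-y|+\sqrt{|t-s|}\big)^{\alpha_{0}}\quad\text{for all }(x,t),(y,s)\in K_{r}.
\end{align*}
Since $\theta_{2}\geq 2$ and $|t-s|$ is bounded on $K$, the right-hand side is dominated by a constant multiple of $(|x-y|+\sqrt[\theta_{2}]{|t-s|}\,)^{\alpha_{0}}$.

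For the singular part, the choice of $R_{0}$ guarantees that for every $t_{0}\in(-T,0)$ with $(0,t_{0})$ at distance at least $d_{1}/2$ from $\partial(\Omega\times(-T,0))$, the parabolic cylinder $B_{R_{0}}\times(t_{0}-R_{0}^{\theta_{2}},t_{0}]$ lies inside $\Omega\times(-T,0)$. Remark \ref{REM001} applied to the time-translated solution $u(\cdot,\cdot+t_{0})$ then provides an exponent $\alpha\in(0,1)$, depending only on $n,p,\theta_{2},\lambda,\overline{m},\overline{M}$ and \emph{independent of $t_{0}$}, and a constant $C_{2}$ depending on $R_{0}$, so that
\begin{align*}
|u(x,t)-u(0,t_{0})|\leq C_{2}\big(|x|+\sqrt[\theta_{2}]{|t-t_{0}|}\,\big)^{\alpha}
\end{align*}
on a parabolic cylinder of scale $\sim R_{0}$ around $(0,t_{0})$. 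Covering the time projection of $K_{s}$ by finitely many such cylinders and invoking the triangle inequality yields a H\"older estimate on $K_{s}$ in the parabolic metric $|x-y|+\sqrt[\theta_{2}]{|t-s|}$ with exponent $\alpha$.

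The final step glues the two bounds. For $(x,t),(y,s)\in K$ with parabolic distance at least $\rho/2$ the a priori bound $|u|\leq\overline{M}$ trivially gives the desired estimate with a large constant; for closer pairs, either both points lie in $K_{r}$ (where the first estimate applies) or both lie in a common singular cylinder (where the second estimate applies via the triangle inequality through $(0,t_{0})$ for a suitably chosen $t_{0}$). Taking the exponent $\min\{\alpha,\alpha_{0}\}$ produces \eqref{MWQ099} with $K$ in place of $B_{1/2}\times(-1/4,0)$ and constants of the stated form. The only real technical point, rather than a genuine obstacle, is reconciling the two parabolic distances $\sqrt{|t-s|}$ and $\sqrt[\theta_{2}]{|t-s|}$; the hypothesis $\theta_{2}\geq 2$ ensures $\sqrt{|t-s|}\leq\sqrt[\theta_{2}]{|t-s|}$ on the bounded set $K$, so both regimes naturally fit into the metric of Theorem \ref{THM060}.
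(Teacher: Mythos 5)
Your overall architecture --- splitting $K$ into a region at a fixed distance $\rho$ from the spatial axis $\{x=0\}$, treated by classical interior regularity for uniformly parabolic equations, and a region near the axis, treated by the rescaled and time-translated version of Theorem \ref{THM060} via Remark \ref{REM001} --- is exactly the paper's intended argument, and your bookkeeping with $R_{0}$, the time translation (legitimate since $A=A(x)$ does not depend on $t$), and the comparison of the metrics $\sqrt{|t-s|}$ and $\sqrt[\theta_{2}]{|t-s|}$ for $\theta_{2}\geq2$ are all sound.

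There is, however, one step that fails as written: your treatment of the singular part $K_{s}$. You quote only the one-point estimate $|u(x,t)-u(0,t_{0})|\leq C\big(|x|+\sqrt[\theta_{2}]{|t-t_{0}|}\big)^{\alpha}$ (the form \eqref{QNAW001ZW}) and then claim that the triangle inequality through $(0,t_{0})$ yields a two-point H\"older estimate on $K_{s}$. It does not: for two points $(x,t),(y,s)\in K_{s}$ with, say, $|x|,|y|\sim\rho/2$ but mutual parabolic distance $\delta\ll\rho$, passing through the axis gives only a bound of order $\rho^{\alpha}$, not $\delta^{\alpha}$; such pairs are excluded from $K_{r}$ by your own case division, so nothing in your argument controls them. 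Upgrading the one-point bound \eqref{QM916} to the genuine two-point bound \eqref{MWQ099} is precisely the nontrivial content of the paper's proof of Theorem \ref{THM060}, done there by rescaling $u_{R}(y,s)=u(Ry,R^{\theta_{2}}s)$ with $R=|x|$ and distinguishing the regimes $|x-\tilde{x}|\leq R^{2}$ and $|x-\tilde{x}|>R^{2}$ (and the analogous split in time). The repair is straightforward: on each translated cylinder $B_{R_{0}/2}\times(t_{0}-R_{0}^{\theta_{2}}/4,t_{0})$ invoke the full two-point estimate \eqref{MWQ099} supplied by Theorem \ref{THM060} together with Remark \ref{REM001}, rather than re-deriving it from the one-point bound; with that substitution the rest of your gluing argument goes through.
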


The rest of this paper is organized as follows. In Section \ref{SEC002} we establish the anisotropic weighted Poincar\'{e} type inequality and its corresponding isoperimetric inequality. Then we make use of the De Giorgi truncation method \cite{D1957} to study the local regularity for solutions to the nonlinear elliptic and parabolic equations with anisotropic weights in Sections \ref{SEC003} and \ref{SEC00four}, respectively.

\section{Anisotropic weighted Poincar\'{e} inequality and its application to the isoperimetric inequality}\label{SEC002}

Before applying the De Giorgi truncation method in \cite{D1957}, we should first establish the corresponding weighted Sobolev and Poincar\'{e} type inequalities. Especially the latter is critical to the establishment of the isoperimetric inequality of De Giorgi type. Recently, Li and Yan \cite{LY2021} solved the anisotropic Caffarelli-Kohn-Nirenberg type inequalities. For the classical Caffarelli-Kohn-Nirenberg type inequalities, see \cite{CKN1984}. We also refer to \cite{L1986,CR2013,CRS2016,BT2002,BCG2006,NS2018,NS2019} for more relevant investigations on weighted Sobolev inequalities.

So it remains to establish the anisotropic weighted Poincar\'{e} type inequality. It will be achieved by proving that $w=|x'|^{\theta_{1}}|x|^{\theta_{2}}$ is an $A_{q}$-weight under the condition of $(\theta_{1},\theta_{2})\in(\mathcal{A}\cup\mathcal{B})\cap(\mathcal{C}_{q}\cup\mathcal{D}_{q})$, see Theorem \ref{THM000Z} and Corollary \ref{QWZM090} below. We here emphasize that the establishment of anisotropic Poincar\'{e} type inequality is one of our main contributions and novelties in this paper.

Denote by $\omega_{n}$ the volume of unit ball in $\mathbb{R}^{n}$. In this section, we employ the notation $a\sim b$ to represent that there exists a constant $C=C(n,\theta_{1},\theta_{2})>0$ such that $\frac{1}{C}b\leq a\leq Cb.$ To begin with, we have
\begin{lemma}\label{MWQAZ090}
$d\mu:=wdx=|x'|^{\theta_{1}}|x|^{\theta_{2}}dx$ is a Radon measure if $(\theta_{1},\theta_{2})\in\mathcal{A}\cup\mathcal{B}$. Moreover, $\mu(B_{R})\sim R^{n+\theta_{1}+\theta_{2}}$ for any $R>0$.

\end{lemma}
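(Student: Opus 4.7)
The plan is to reduce the verification of both claims to a single explicit computation of $\mu(B_1)$ via a clever coordinate change that simultaneously decouples the two competing singularities in $w$, namely the one at the origin (driven by $\theta_2$) and the one along the $(n-1)$-dimensional subspace $\{x'=0\}$ (driven by $\theta_1$).

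First I would establish that $d\mu$ is a Radon measure by showing that $\int_{B_1} |x'|^{\theta_1}|x|^{\theta_2}\,dx < \infty$ under either assumption $(\theta_1,\theta_2)\in\mathcal A$ or $(\theta_1,\theta_2)\in\mathcal B$. Writing $x=(x',x_n)\in\mathbb{R}^{n-1}\times\mathbb{R}$ and applying spherical coordinates to $x'$ gives $dx = r^{n-2}\,dr\,d\omega'\,dx_n$ with $r=|x'|$. I would then change to planar polar coordinates in the $(r,x_n)$ half-plane $\{r\geq 0\}$ via $(r,x_n)=(\rho\cos\phi,\rho\sin\phi)$, $\rho\in[0,\infty)$, $\phi\in[-\pi/2,\pi/2]$; the Jacobian is $\rho$, and crucially $|x|=\rho$, $|x'|=\rho\cos\phi$. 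The integral then factorizes as
\begin{equation*}
\mu(B_1)=|\mathbb{S}^{n-2}|\int_0^1 \rho^{\,n-1+\theta_1+\theta_2}\,d\rho\,\int_{-\pi/2}^{\pi/2}(\cos\phi)^{n-2+\theta_1}\,d\phi.
\end{equation*}
The $\rho$-integral converges iff $n+\theta_1+\theta_2>0$, and the $\phi$-integral converges near $\pm\pi/2$ iff $n-1+\theta_1>0$. Both conditions hold throughout $\mathcal A\cup\mathcal B$: in $\mathcal A$ we have $\theta_1>-(n-1)$ and $\theta_2\ge 0$, so automatically $\theta_1+\theta_2>-(n-1)>-n$; in $\mathcal B$ both inequalities $\theta_1>-(n-1)$ and $\theta_1+\theta_2>-n$ are assumed directly. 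Since $w$ is continuous and positive on $\mathbb{R}^n\setminus\{x'=0\}$, local integrability on $B_1$ (and hence on every compact set, by translation to enlarge the radius) guarantees that $d\mu$ is a locally finite Borel measure, i.e.\ a Radon measure.

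For the scaling comparison $\mu(B_R)\sim R^{n+\theta_1+\theta_2}$, I would exploit the homogeneity $w(Ry)=R^{\theta_1+\theta_2}w(y)$. The change of variables $x=Ry$ in $\mu(B_R)=\int_{B_R}w(x)\,dx$ produces
\begin{equation*}
\mu(B_R)=R^{n+\theta_1+\theta_2}\mu(B_1),
\end{equation*}
and the factorized computation above shows $\mu(B_1)$ is a strictly positive constant depending only on $n,\theta_1,\theta_2$, which gives the claimed two-sided bound.

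There is no substantive obstacle; the only subtlety is in selecting coordinates that simultaneously trivialize both singular sets of $w$. The $(\rho,\phi)$ polar reduction in the $(r,x_n)$ half-plane does exactly this, turning the seemingly coupled anisotropic weight into a genuine product, after which the two convergence conditions correspond one-to-one with the two constraints defining $\mathcal A\cup\mathcal B$.
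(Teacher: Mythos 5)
Your proof is correct, and it takes a genuinely different and substantially cleaner route than the paper's. The paper splits into the cases $\theta_{2}\geq0$ and $\theta_{2}<0$, uses the elementary inequalities for $(a+b)^{q}$ to replace $|x|^{\theta_{2}}$ by $|x'|^{\theta_{2}}+x_{n}^{\theta_{2}}$ (or its reciprocal analogue), and then decomposes $B_{r}$ into three regions ($I_{1},I_{2},I_{3}$), reducing each piece to a Beta-type integral in the radial variable $s=|x'|$, with a further subcase $\theta_{2}=-1$ requiring a separate logarithmic computation. Your polar change of variables $(r,x_{n})=(\rho\cos\phi,\rho\sin\phi)$ in the $(|x'|,x_{n})$ half-plane diagonalizes both singular sets at once, so the integral factors exactly into a $\rho$-integral (convergent iff $n+\theta_{1}+\theta_{2}>0$) and a $\phi$-integral (convergent iff $\theta_{1}>-(n-1)$), and these two conditions are visibly exactly what $\mathcal{A}\cup\mathcal{B}$ guarantees; moreover the exact homogeneity $\mu(B_{R})=R^{n+\theta_{1}+\theta_{2}}\mu(B_{1})$ you obtain is sharper than the two-sided comparison the paper derives region by region. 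The one phrase to tidy is ``by translation to enlarge the radius'': translation is irrelevant here; what you actually use (and state correctly afterward) is the dilation identity, which bounds $\mu$ on every ball $B_{R}$ centered at the origin and hence on every compact set. Note also that for $n=2$ the ``spherical coordinates on $x'$'' degenerate to the counting measure on $\mathbb{S}^{0}=\{\pm1\}$, which is harmless but worth a word.
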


%\begin{remark}
%Our computation method in Lemma \ref{MWQAZ090} can be used to deal with more general anisotropic weights such as $(\sum^{k}_{i=1}x_{i}^{2})^{\frac{\theta_{1}}{2}}|x|^{\theta_{2}}$, for any $1\leq k\leq n-1$.
%
%\end{remark}

\begin{proof}
It suffices to verify that the weight $w$ is a locally integrable function in $\mathbb{R}^{n}$ under these above cases. Specifically, it only needs to prove that for any $r>0$, there holds $\mu(B_{r})<\infty$. Recall the following elemental inequalities: for $a,b\geq0$,
\begin{align}\label{INE001}
\begin{cases}
\frac{a^{q}+b^{q}}{2}\leq(a+b)^{q}\leq a^{q}+b^{q},&0\leq q\leq1,\\
a^{q}+b^{q}\leq(a+b)^{q}\leq2^{q-1}(a^{q}+b^{q}),&q>1.
\end{cases}
\end{align}

\noindent{\bf Step 1.} Consider the case when $\theta_{2}\geq0$. Then we have from \eqref{INE001} that
\begin{align*}
\mu(B_{r})=&2\int_{B_{r}\cap\{x_{n}>0\}}|x'|^{\theta_{1}}|x|^{\theta_{2}}dx\sim\int_{B_{r}'}|x'|^{\theta_{1}}dx'\int_{0}^{\sqrt{r^{2}-|x'|^{2}}}(|x'|^{\theta_{2}}+x_{n}^{\theta_{2}})dx_{n}\notag\\
\sim&\int^{r}_{0}\left(s^{n+\theta_{1}+\theta_{2}-2}(r^{2}-s^{2})^{\frac{1}{2}}+s^{n+\theta_{1}-2}(r^{2}-s^{2})^{\frac{\theta_{2}+1}{2}}\right)ds\notag\\
\sim&\,r^{n+\theta_{1}+\theta_{2}}\int^{1}_{0}\left(s^{n+\theta_{1}+\theta_{2}-2}(1-s)^{\frac{1}{2}}+s^{n+\theta_{1}-2}(1-s)^{\frac{\theta_{2}+1}{2}}\right)ds.
\end{align*}
Observe that this type of integration is called Beta function. It makes sense if and only if $n+\theta_{1}+\theta_{2}-1>0$ and $n+\theta_{1}-1>0$. Then the conclusion is proved in the case when $(\theta_{1},\theta_{2})\in\mathcal{A}$.

\noindent{\bf Step 2.} Consider $\theta_{2}<0$. Then it follows from \eqref{INE001} that
\begin{align}\label{WDZM001}
\mu(B_{r})=&2\int_{B_{r}\cap\{x_{n}>0\}}|x'|^{\theta_{1}}|x|^{\theta_{2}}dx\notag\\
\sim&\int_{B_{r}'}|x'|^{\theta_{1}}dx'\int^{\sqrt{r^{2}-|x'|^{2}}}_{0}\frac{1}{|x'|^{-\theta_{2}}+x_{n}^{-\theta_{2}}}dx_{n}.
\end{align}
For the last integration term in \eqref{WDZM001}, we further split it as follows:
\begin{align*}
I_{1}=&\int_{B'_{\frac{1}{\sqrt{2}}r}}|x'|^{\theta_{1}}dx'\int^{|x'|}_{0}\frac{1}{|x'|^{-\theta_{2}}+x_{n}^{-\theta_{2}}}dx_{n},\notag\\
I_{2}=&\int_{B'_{\frac{1}{\sqrt{2}}r}}|x'|^{\theta_{1}}dx'\int^{\sqrt{r^{2}-|x'|^{2}}}_{|x'|}\frac{1}{|x'|^{-\theta_{2}}+x_{n}^{-\theta_{2}}}dx_{n},\notag\\
I_{3}=&\int_{B_{r}'\setminus B'_{\frac{1}{\sqrt{2}}r}}|x'|^{\theta_{1}}dx'\int^{\sqrt{r^{2}-|x'|^{2}}}_{0}\frac{1}{|x'|^{-\theta_{2}}+x_{n}^{-\theta_{2}}}dx_{n}.
\end{align*}
Observe that $|x'|^{-\theta_{2}}\leq|x'|^{-\theta_{2}}+x_{n}^{-\theta_{2}}\leq2|x'|^{-\theta_{2}}$ if $0\leq x_{n}\leq |x'|$. Then for the first term $I_{1}$, we have
\begin{align*}
I_{1}\sim&\int_{B'_{\frac{1}{\sqrt{2}}r}}|x'|^{\theta_{1}}dx'\int^{|x'|}_{0}|x'|^{\theta_{2}}dx_{n}\sim\int^{\frac{r}{\sqrt{2}}}_{0}s^{n+\theta_{1}+\theta_{2}-1}ds\notag\\
\sim&r^{n+\theta_{1}+\theta_{2}}\int^{1}_{0}s^{n+\theta_{1}+\theta_{2}-1}ds.
\end{align*}
This integration makes sense iff $n+\theta_{1}+\theta_{2}>0$.

With regard to the second term $I_{2}$, we divide into two cases to discuss as follow.

{\bf Case 1.} If $\theta_{2}=-1$, then
\begin{align*}
I_{2}\sim&\int_{B'_{\frac{1}{\sqrt{2}}r}}|x'|^{\theta_{1}}dx'\int^{\sqrt{r^{2}-|x'|^{2}}}_{|x'|}x_{n}^{-1}dx_{n}\sim\int^{\frac{r}{\sqrt{2}}}_{0}s^{n+\theta_{1}-2}\ln\frac{\sqrt{r^{2}-s^{2}}}{s}ds\notag\\
\sim&\,r^{n+\theta_{1}-1}\int^{\frac{1}{\sqrt{2}}}_{0}s^{n+\theta_{1}-2}\ln\frac{\sqrt{1-s^{2}}}{s}ds\sim-r^{n+\theta_{1}-1}\int^{\frac{1}{\sqrt{2}}}_{0}s^{n+\theta_{1}-2}\ln s\,ds\notag\\
\sim&-r^{n+\theta_{1}+\theta_{2}}\bigg(s^{n+\theta_{1}+\theta_{2}}\ln s\Big|^{\frac{1}{\sqrt{2}}}_{0}-\int^{\frac{1}{\sqrt{2}}}_{0}s^{n+\theta_{1}+\theta_{2}-1}ds\bigg).
\end{align*}
It makes sense iff $n+\theta_{1}+\theta_{2}>0$.

{\bf Case 2.} If $\theta_{2}\neq-1$, then
\begin{align*}
I_{2}\sim&\int_{B'_{\frac{1}{\sqrt{2}}r}}|x'|^{\theta_{1}}dx'\int^{\sqrt{r^{2}-|x'|^{2}}}_{|x'|}x_{n}^{\theta_{2}}dx_{n}\notag\\
\sim&\int^{\frac{1}{\sqrt{2}}r}_{0}s^{n+\theta_{1}-2}\left((r^{2}-s^{2})^{\frac{\theta_{2}+1}{2}}-s^{\theta_{2}+1}\right)ds\notag\\
\sim&\,r^{n+\theta_{1}+\theta_{2}}\int^{\frac{1}{\sqrt{2}}}_{0}s^{n+\theta_{1}-2}\left((1-s^{2})^{\frac{\theta_{2}+1}{2}}-s^{\theta_{2}+1}\right)ds.
\end{align*}
Note that $\min\{1,2^{-\frac{\theta_{2}+1}{2}}\}\leq|(1-s^{2})^{\frac{\theta_{2}+1}{2}}|\leq\max\{1,2^{-\frac{\theta_{2}+1}{2}}\}$ in $[0,\frac{1}{\sqrt{2}}].$ Then it makes sense iff $\theta_{1}>-(n-1)$ and $n+\theta_{1}+\theta_{2}>0$.

It remains to analyze the last term $I_{3}$. Note that $|x'|\geq\sqrt{r^{2}-|x'|^{2}}\geq x_{n}$ if $\frac{1}{\sqrt{2}}r\leq|x'|\leq r$ and $0\leq x_{n}\leq\sqrt{r^{2}-|x'|^{2}}$. Then we deduce
\begin{align*}
I_{3}\sim&\int_{B_{r}'\setminus B'_{\frac{1}{\sqrt{2}}r}}|x'|^{\theta_{1}}dx'\int^{\sqrt{r^{2}-|x'|^{2}}}_{0}|x'|^{\theta_{2}}dx_{n}\notag\\
\sim&\int_{\frac{1}{\sqrt{2}}r}^{r}s^{n+\theta_{1}+\theta_{2}-2}\sqrt{r^{2}-s^{2}}ds\sim r^{n+\theta_{1}+\theta_{2}}\int_{\frac{1}{\sqrt{2}}}^{1}s^{n+\theta_{1}+\theta_{2}-2}\sqrt{1-s^{2}}ds\notag\\
\sim&r^{n+\theta_{1}+\theta_{2}},
\end{align*}
where we used the fact that the integrand $s^{n+\theta_{1}+\theta_{2}}\sqrt{1-s^{2}}$ has no singular point in $[\frac{1}{\sqrt{2}},1].$ Consequently, combining these above facts, we obtain that if $(\theta_{1},\theta_{2})\in\mathcal{B}$, then $d\mu$ is a Radon measure. The proof is complete.

\end{proof}

\begin{definition}
A Radon measure $d\mu$ is called doubling if there exists some constant $0<C<\infty$ such that $\mu(B_{2R}(\bar{x}))\leq C\mu(B_{R}(\bar{x}))$ for any $\bar{x}\in\mathbb{R}^{n}$ and $R>0$.
\end{definition}
\begin{theorem}\label{Lem009}
The Radon measure $d\mu=wdx$ is doubling if $(\theta_{1},\theta_{2})\in\mathcal{A}\cup\mathcal{B}$.
\end{theorem}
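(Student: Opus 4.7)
My plan is to verify the doubling estimate directly by splitting on the position of the center $\bar x$ relative to the origin and to the $x_n$-axis, measured in units of $R$. Two elementary reductions clean up the bookkeeping: the homogeneity $w(\lambda x)=\lambda^{\theta_1+\theta_2}w(x)$ makes the doubling ratio invariant under $(\bar x,R)\mapsto(\lambda\bar x,\lambda R)$, and the rotational invariance of $w$ in the $x'$-variable lets one assume $\bar x'$ is aligned with a single axis. Given these, there are three geometric regimes to treat: (A) $|\bar x|\leq 4R$; (B) $|\bar x|>4R$ with $|\bar x'|>4R$; and (C) $|\bar x|>4R$ with $|\bar x'|\leq 4R$.

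Regimes (A) and (B) are routine. In (A), $B_{2R}(\bar x)\subset B_{6R}(0)$ and Lemma \ref{MWQAZ090} gives $\mu(B_{2R}(\bar x))\lesssim R^{n+\theta_1+\theta_2}$; I match this with a lower bound by locating a sub-ball $B_{R/8}(\bar y)\subset B_R(\bar x)$ on which $|x'|,|x|\sim R$, so that $w\sim R^{\theta_1+\theta_2}$ there. Such a $\bar y$ can be produced in every configuration by translating $\bar x$ by at most $R/2$ in the $e_1$- and, if necessary, $e_n$-directions, pushing simultaneously away from $0$ and from the $x_n$-axis. In (B), both $|x'|$ and $|x|$ stay within a factor of $2$ of $|\bar x'|$ and $|\bar x|$ on $B_{2R}(\bar x)$, so $w$ is essentially constant and $\mu(B_{cR}(\bar x))\sim (cR)^n|\bar x'|^{\theta_1}|\bar x|^{\theta_2}$ for $c\in\{1,2\}$, giving doubling with constant $2^n$.

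The main obstacle is regime (C), where the ball lies far from the origin but straddles the $x_n$-axis. Here only the factor $|x|^{\theta_2}$ is essentially constant (comparable to $|\bar x|^{\theta_2}$) on $B_{2R}(\bar x)$, whereas $|x'|^{\theta_1}$ fluctuates. After factoring out the $|x|^{\theta_2}$-part, the task becomes
$$\int_{B_{2R}(\bar x)}|x'|^{\theta_1}dx\leq C\int_{B_R(\bar x)}|x'|^{\theta_1}dx.$$
Slicing by $x_n$ and setting $F(r):=\int_{B'_r(\bar x')}|x'|^{\theta_1}dx'$, elementary bounds on $\sqrt{R^2-s^2}$ for $|s|\leq R/2$ versus $|s|\leq R$ yield
$$R\cdot F(R/2)\leq\int_{B_R(\bar x)}|x'|^{\theta_1}dx\leq 2R\cdot F(R),$$
reducing the whole question to $F(2R)\leq C F(R/2)$, i.e., the doubling of the weight $|x'|^{\theta_1}$ on balls in $\mathbb{R}^{n-1}$. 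I then establish this auxiliary doubling by the same two-case split (with only one factor to control): when $|\bar x'|\leq 4R$, the estimate $F(r)\sim r^{n-1+\theta_1}$ follows from an $(n-1)$-dimensional copy of Lemma \ref{MWQAZ090}, using precisely the hypothesis $\theta_1>-(n-1)$; when $|\bar x'|>4R$, $|x'|$ is essentially constant on $B'_{2R}(\bar x')$ and ordinary Lebesgue doubling closes the argument. Composing these estimates gives $\mu(B_{2R}(\bar x))\leq C\mu(B_R(\bar x))$ in every regime, with $C$ depending only on $n,\theta_1,\theta_2$.
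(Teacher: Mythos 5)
Your proposal is correct, and its overall architecture coincides with the paper's: split on the size of $|\bar x|$ relative to $R$, factor out the essentially constant $|x|^{\theta_2}$ when the ball is far from the origin, reduce the remaining comparison of $\int|x'|^{\theta_1}$ over $n$-dimensional balls to $(n-1)$-dimensional slice integrals $\int_{B'_r(\bar x')}|x'|^{\theta_1}dx'$ with a further dichotomy on $|\bar x'|$ versus $R$, and use the explicit computation $\mu(B_r(0))\sim r^{n+\theta_1+\theta_2}$ (Lemma \ref{MWQAZ090}) for the upper bound near the origin. The one genuine improvement in execution is your lower bound in regime (A): locating a sub-ball $B_{R/8}(\bar y)\subset B_R(\bar x)$ with $\bar y=\bar x+\tfrac{R}{2}\tfrac{\bar x'}{|\bar x'|}$ (pushing away from the $x_n$-axis, which automatically gives $|x|\geq|x'|\gtrsim R$ as well) makes $w\sim R^{\theta_1+\theta_2}$ on that sub-ball uniformly in the signs of $\theta_1,\theta_2$, whereas the paper runs a separate sign-by-sign analysis ($|x'|^{\theta_1}\geq|x|^{\theta_1}$ when $\theta_1<0$, $|x|^{\theta_2}\geq|x'|^{\theta_2}$ when $\theta_1,\theta_2\geq0$, restriction to $\{|x'|>R/2\}$ when $\theta_1\geq0>\theta_2$). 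Your packaging of regime (C) as a clean reduction to the doubling of the $(n-1)$-dimensional weight $|x'|^{\theta_1}$, with $\theta_1>-(n-1)$ entering exactly as the integrability threshold there, is also tidier than the paper's in-line case analysis, though the underlying estimates are the same.
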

\begin{remark}
$w=|x'|^{\theta_{1}}|x|^{\theta_{2}}$ degenerates to be an isotropic weight if $\theta_{1}=0$. In this case, it is doubling if $\theta_{2}>-n$. Its proof is simple and direct, see pages 505-506 in \cite{G2014} for more details. By contrast, it will involve complex analyses, computations and discussions if $\theta_{1}\neq0$.
\end{remark}

\begin{proof}
For any $\bar{x}\in\mathbb{R}^{n}$ and $R>0$, we divide all balls $B_{R}(\bar{x})$ into two types as follows: the first type satisfies $|\bar{x}|\geq3R$ and the second type satisfies $|\bar{x}|<3R$.
\noindent{\bf Step 1.}
Consider the case when $|\bar{x}|\geq 3R$. Observe that
\begin{align}\label{DMZQM001}
\int_{B_{R}(\bar{x})}|x'|^{\theta_{1}}|x|^{\theta_{2}}dx\geq
\begin{cases}
(|\bar{x}|-R)^{\theta_{2}}\int_{B_{R}(\bar{x})}|x'|^{\theta_{1}}dx,&\text{if }\theta_{2}\geq0,\\
(|\bar{x}|+R)^{\theta_{2}}\int_{B_{R}(\bar{x})}|x'|^{\theta_{1}}dx,&\text{if }\theta_{2}<0,
\end{cases}
\end{align}
and
\begin{align}\label{DMZQM002}
\int_{B_{2R}(\bar{x})}|x'|^{\theta_{1}}|x|^{\theta_{2}}dx\leq
\begin{cases}
(|\bar{x}|+2R)^{\theta_{2}}\int_{B_{2R}(\bar{x})}|x'|^{\theta_{1}}dx,&\text{if }\theta_{2}\geq0,\\
(|\bar{x}|-2R)^{\theta_{2}}\int_{B_{2R}(\bar{x})}|x'|^{\theta_{1}}dx,&\text{if }\theta_{2}<0.
\end{cases}
\end{align}
On one hand, we have
\begin{align*}
\int_{B_{R}(\bar{x})}|x'|^{\theta_{1}}dx=&2\int_{B_{R}'(\bar{x}')}|x'|^{\theta_{1}}\sqrt{R^{2}-|x'-\bar{x}'|^{2}}dx'\geq\sqrt{3}R\int_{B'_{R/2}(\bar{x}')}|x'|^{\theta_{1}}dx'.
\end{align*}
Observe that $|x'|^{\theta_{1}}$ increases radially if $\theta_{1}\geq0$, while it decreases radially for $\theta_{1}<0$. Then we obtain that

$(i)$ for $|\bar{x}'|\geq\frac{3}{2}R$, then
\begin{align*}
\int_{B'_{R/2}(\bar{x}')}|x'|^{\theta_{1}}dx'\geq&\omega_{n-1}\Big(\frac{R}{2}\Big)^{n-1}
\begin{cases}
\big(|\bar{x}'|-R/2\big)^{\theta_{1}},&\text{if }\theta_{1}\geq0,\\
\big(|\bar{x}'|+R/2\big)^{\theta_{1}},&\text{if }\theta_{1}<0;
\end{cases}
\end{align*}

$(ii)$ for $|\bar{x}'|<\frac{3}{2}R,$ then
\begin{align*}
\int_{B'_{R/2}(\bar{x}')}|x'|^{\theta_{1}}dx'\geq&
\begin{cases}
\int_{B'_{R/2}(0')}|x'|^{\theta_{1}}dx',&\text{if }\theta_{1}\geq0,\vspace{0.3ex}\\
\int_{B'_{R/2}(\frac{3}{2}R\frac{\bar{x}'}{|\bar{x}'|})}|x'|^{\theta_{1}}dx',&\text{if }\theta_{1}<0
\end{cases}\notag\\
\geq&\omega_{n-1}R^{n-1+\theta_{1}}
\begin{cases}
\frac{n-1}{2^{n-1+\theta_{1}}(n-1+\theta_{1})},&\text{if }\theta_{1}\geq0,\vspace{0.3ex}\\
\frac{1}{2^{n-1}},&\text{if }\theta_{1}<0.
\end{cases}
\end{align*}

On the other hand, we have
\begin{align}\label{QDWZ010}
\int_{B_{2R}(\bar{x})}|x'|^{\theta_{1}}dx=&2\int_{B_{2R}'(\bar{x}')}|x'|^{\theta_{1}}\sqrt{4R^{2}-|x'-\bar{x}'|^{2}}dx'\notag\\
\leq&4R\int_{B'_{2R}(\bar{x}')}|x'|^{\theta_{1}}dx'.
\end{align}
By the same argument as above, we deduce that

$(1)$ for $|\bar{x}'|\geq3R$, then
\begin{align}\label{QDWZ012}
\int_{B'_{2R}(\bar{x}')}|x'|^{\theta_{1}}dx'\leq&\omega_{n-1}(2R)^{n-1}
\begin{cases}
\big(|\bar{x}'|+2R\big)^{\theta_{1}},&\text{if }\theta_{1}\geq0,\\
\big(|\bar{x}'|-2R\big)^{\theta_{1}},&\text{if }\theta_{1}<0;
\end{cases}
\end{align}

$(2)$ for $|\bar{x}'|<3R$, then
\begin{align}\label{QDWZ013}
\int_{B'_{2R}(\bar{x}')}|x'|^{\theta_{1}}dx'\leq&\int_{B'_{5R}(0')}|x'|^{\theta_{1}}dx'=\frac{(n-1)\omega_{n-1}}{n-1+\theta_{1}}(5R)^{n-1+\theta_{1}}.
\end{align}
Note that if $\theta_{1}\geq0$, then
\begin{align*}
\begin{cases}
|\bar{x}'|+2R\leq2(|\bar{x}'|-R/2),&\text{for }|\bar{x}'|\geq3R,\\
R<|\bar{x}'|-R/2<5R/2,&\text{for }3R/2<|\bar{x}'|<3R,
\end{cases}
\end{align*}
while, if $\theta_{1}<0$,
\begin{align*}
\begin{cases}
|\bar{x}'|-2R\geq\frac{2}{7}(|\bar{x}'|+R/2),&\text{for }|\bar{x}'|\geq3R,\\
2R<|\bar{x}'|+R/2<7R/2,&\text{for }3R/2<|\bar{x}'|<3R.
\end{cases}
\end{align*}
Then combining these above facts, we obtain
\begin{align}\label{DMZQM003}
\int_{B_{2R}(\bar{x})}|x'|^{\theta_{1}}dx\leq C(n,\theta_{1},\theta_{2})\int_{B_{R}(\bar{x})}|x'|^{\theta_{1}}dx.
\end{align}
Since $|\bar{x}|\geq3R$, then $|\bar{x}|+2R\leq4(|\bar{x}-R|)$ and $|\bar{x}|+R\leq4(|\bar{x}|-2R)$. This, in combination with \eqref{DMZQM001}--\eqref{DMZQM003}, reads that for $|\bar{x}|\geq3R,$
\begin{align}\label{MWQAE001}
\int_{B_{2R}(\bar{x})}|x'|^{\theta_{1}}|x|^{\theta_{2}}dx\leq C(n,\theta_{1},\theta_{2})\int_{B_{R}(\bar{x})}|x'|^{\theta_{1}}|x|^{\theta_{2}}dx.
\end{align}

\noindent{\bf Step 2.}
Let $|\bar{x}|<3R$. Then we have
\begin{align*}
\int_{B_{2R}(\bar{x})}|x'|^{\theta_{1}}|x|^{\theta_{2}}dx\leq \int_{B_{5R}(0)}|x'|^{\theta_{1}}|x|^{\theta_{2}}dx\leq C(n,\theta_{1},\theta_{2})R^{n+\theta_{1}+\theta_{2}}.
\end{align*}
First, if $\theta_{1}<0$, then
\begin{align*}
\int_{B_{R}(\bar{x})}|x'|^{\theta_{1}}|x|^{\theta_{2}}\geq&\int_{B_{R}(\bar{x})}|x|^{\theta_{1}+\theta_{2}}dx\notag\\
\geq&
\begin{cases}
\int_{B_{R}(0)}|x|^{\theta_{1}+\theta_{2}}dx,&\text{if }\theta_{2}\geq-\theta_{1},\\
\int_{B_{R}(3R\frac{\bar{x}}{|\bar{x}|})}|x|^{\theta_{1}+\theta_{2}}dx,&\text{if } \theta_{2}<-\theta_{1}
\end{cases}\notag\\
\geq&R^{n+\theta_{1}+\theta_{2}}
\begin{cases}
\frac{n\omega_{n}}{n+\theta_{1}+\theta_{2}},&\text{if }\theta_{2}\geq-\theta_{1},\\
2^{\theta_{1}+\theta_{2}}\omega_{n},&\text{if } \theta_{2}<-\theta_{1},
\end{cases}
\end{align*}
where we utilized the fact that $|x|^{\theta_{1}+\theta_{2}}$ is radially increasing if $\theta_{1}+\theta_{2}\geq0$ and radially decreasing if $\theta_{1}+\theta_{2}<0$.

Second, if $\theta_{1}\geq0$, we discuss as follows:

$(i)$ for $\theta_{2}\geq0$, similarly as before, we have
\begin{align*}
\int_{B_{R}(\bar{x})}|x'|^{\theta_{1}}|x|^{\theta_{2}}dx\geq&\int_{B_{R}(\bar{x})}|x'|^{\theta_{1}+\theta_{2}}dx\geq\sqrt{3}R\int_{B'_{R}(\bar{x}')}|x'|^{\theta_{1}+\theta_{2}}dx'\notag\\
\geq&\int_{B_{R}'(0')}|x'|^{\theta_{1}+\theta_{2}}dx'=\frac{(n-1)\omega_{n-1}}{n+\theta_{1}+\theta_{2}-1}R^{n+\theta_{1}+\theta_{2}};
\end{align*}

$(ii)$ for $\theta_{2}<0$, then
\begin{align*}
\int_{B_{R}(\bar{x})}|x'|^{\theta_{1}}|x|^{\theta_{2}}dx=&\int_{B_{R}(\bar{x})}|x'|^{\theta_{1}+\theta_{2}}\Big(\frac{|x'|}{|x|}\Big)^{-\theta_{2}}dx\notag\\
\geq&\frac{8^{\theta_{2}}}{2^{\theta_{1}+\theta_{2}}}R^{\theta_{1}+\theta_{2}}\int_{B_{R}(\bar{x})\cap\{|x'|>R/2\}}dx\notag\\
\geq&\frac{8^{\theta_{2}}}{2^{\theta_{1}+\theta_{2}}}R^{\theta_{1}+\theta_{2}}\int_{B_{R}(0)\cap\{|x'|>R/2\}}dx\notag\\
\geq&\frac{8^{\theta_{2}}(\omega_{n}-2^{2-n}\omega_{n-1})}{2^{\theta_{1}+\theta_{2}}}R^{n+\theta_{1}+\theta_{2}}.
\end{align*}
Then combining these aforementioned facts, we obtain that \eqref{MWQAE001} also holds if $|\bar{x}|<3R.$ The proof is complete.

\end{proof}

\begin{definition}
Let $1<q<\infty$. We say that $w$ is an $A_{q}$-weight, if there is a positive constant $C=C(n,q,w)$ such that
\begin{align*}
\dashint_{B}wdx\left(\dashint_{B}w^{-\frac{1}{q-1}}dx\right)^{q-1}\leq C(n,q,w),\quad\mathrm{with}\;\dashint_{B}=\frac{1}{|B|}\int_{B},
\end{align*}
for any ball $B$ in $\mathbb{R}^{n}$.

\end{definition}

\begin{theorem}\label{THM000Z}
Let $1<q<\infty$. If $(\theta_{1},\theta_{2})\in(\mathcal{A}\cup\mathcal{B})\cap(\mathcal{C}_{q}\cup\mathcal{D}_{q})$, then $w=|x'|^{\theta_{1}}|x|^{\theta_{2}}$ is an $A_{q}$-weight.
\end{theorem}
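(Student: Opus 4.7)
I would reduce the $A_q$ inequality to scaling estimates on balanced measures and then split geometrically. Set $\bar w(x) := w(x)^{-1/(q-1)} = |x'|^{\tilde\theta_1}|x|^{\tilde\theta_2}$ with $\tilde\theta_i = -\theta_i/(q-1)$. A direct check shows that the hypothesis $(\theta_1,\theta_2) \in (\mathcal{A}\cup\mathcal{B})\cap(\mathcal{C}_q\cup\mathcal{D}_q)$ is equivalent to
\[
-(n-1)<\theta_1<(n-1)(q-1),\qquad -n<\theta_1+\theta_2<n(q-1),
\]
and is therefore symmetric under the involution $(\theta_1,\theta_2)\mapsto(\tilde\theta_1,\tilde\theta_2)$. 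In particular $(\tilde\theta_1,\tilde\theta_2)\in\mathcal{A}\cup\mathcal{B}$ as well, so Lemma \ref{MWQAZ090} and Theorem \ref{Lem009} apply to both $d\mu := w\,dx$ and $d\bar\mu := \bar w\,dx$. The goal is to prove
\[
I(\bar x, R) := \mu(B_R(\bar x))\,\bar\mu(B_R(\bar x))^{q-1}\le C R^{qn}
\]
for every ball $B_R(\bar x)\subset \mathbb{R}^n$, with $C = C(n,q,\theta_1,\theta_2)$, by cases on whether $|\bar x|<3R$ or $|\bar x|\ge 3R$.

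\textbf{Case 1: $|\bar x|<3R$.} Since $B_R(0)\subset B_{4R}(\bar x)$ and $B_R(\bar x)\subset B_{4R}(0)$, a bounded number of applications of the doubling property (Theorem \ref{Lem009}) gives $\mu(B_R(\bar x))\sim\mu(B_R(0))$, and similarly for $\bar\mu$. Combined with Lemma \ref{MWQAZ090} applied to both measures, this yields
\[
\mu(B_R(\bar x))\sim R^{n+\theta_1+\theta_2},\qquad \bar\mu(B_R(\bar x))\sim R^{n+\tilde\theta_1+\tilde\theta_2}.
\]
Since $\theta_i+(q-1)\tilde\theta_i=0$, the exponents satisfy $(n+\theta_1+\theta_2)+(q-1)(n+\tilde\theta_1+\tilde\theta_2)=qn$, so $I(\bar x,R)\sim R^{qn}$ as required.

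\textbf{Case 2: $|\bar x|\ge 3R$.} Here $\tfrac23|\bar x|\le|x|\le\tfrac43|\bar x|$ on $B_R(\bar x)$, whence $|x|^{\theta_2}\sim|\bar x|^{\theta_2}$ and $|x|^{\tilde\theta_2}\sim|\bar x|^{\tilde\theta_2}$ uniformly. Factoring these out and using $\theta_2+(q-1)\tilde\theta_2=0$, the desired bound reduces to the $A_q$-condition on $B_R(\bar x)$ for the single weight $v(x):=|x'|^{\theta_1}$, viewed as a weight on $\mathbb{R}^n$. A Fubini slicing in $x_n$, combined with the observation that for $|x_n-\bar x_n|<R/2$ the slice of $B_R(\bar x)$ contains $B'_{R\sqrt 3/2}(\bar x')$, further reduces the problem to the classical Muckenhoupt $A_q$-property of $|x'|^{\theta_1}$ on $\mathbb{R}^{n-1}$, which holds precisely when $-(n-1)<\theta_1<(n-1)(q-1)$, i.e.\ in the range the hypothesis supplies.

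\textbf{Main obstacle.} The less routine point is verifying the $(n-1)$-dimensional $A_q$-property for $|x'|^{\theta_1}$ uniformly across all balls $B'_r(\bar x')\subset\mathbb{R}^{n-1}$, especially when $\bar x'$ lies close to the singular set $\{x'=0\}$. I would handle this by a second case split on $\bar x'$ mirroring the proof of Theorem \ref{Lem009}: if $|\bar x'|\ge 3r$ then $|x'|\sim|\bar x'|$ is essentially constant on $B'_r(\bar x')$ and both integrals are elementary; if $|\bar x'|<3r$ then Lemma \ref{MWQAZ090} applied in dimension $n-1$, together with the doubling of $|x'|^{\theta_1}dx'$, gives $\int_{B'_r(\bar x')}|x'|^{\theta_1}dx'\sim r^{n-1+\theta_1}$, and likewise for the dual weight, from which the $A_q$ bound follows exactly as in Case 1 above.
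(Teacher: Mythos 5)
Your proposal is correct and follows essentially the same route as the paper: the same dichotomy $|\bar x|\ge 3R$ versus $|\bar x|<3R$, the same comparison with centered balls via Lemma \ref{MWQAZ090} and doubling in the near case, and the same key observation that $(-\theta_1/(q-1),-\theta_2/(q-1))\in\mathcal{A}\cup\mathcal{B}$ exactly when $(\theta_1,\theta_2)\in\mathcal{C}_q\cup\mathcal{D}_q$. Your handling of the far case --- factoring out $|x|^{\theta_2}\sim|\bar x|^{\theta_2}$ and reducing by Fubini to the classical $(n-1)$-dimensional Muckenhoupt property of $|x'|^{\theta_1}$ --- is a cleaner (and arguably more carefully quantified) packaging of the slicing estimates the paper imports from its proof of Theorem \ref{Lem009}, but it is the same underlying computation.
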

\begin{remark}
From Theorems \ref{Lem009} and \ref{THM000Z}, we see that the Radon measure $d\mu=wdx=|x'|^{\theta_{1}}|x|^{\theta_{2}}dx$ is doubling on a larger range $(\theta_{1},\theta_{2})\in\mathcal{A}\cup\mathcal{B}$. This implies that when $(\theta_{1},\theta_{2})\in(\mathcal{A}\cup\mathcal{B})\setminus(\mathcal{C}_{q}\cup\mathcal{D}_{q})$, the weight $w=|x'|^{\theta_{1}}|x|^{\theta_{2}}$ provides an example of a doubling measure but is not in $A_{q}$.

\end{remark}

\begin{proof}
For $1<q<\infty,$ according to the definition of $A_{q}$-weight, it needs to verify the following inequality:
\begin{align}\label{MDE016}
\dashint_{B}wdx\left(\dashint_{B}w^{-\frac{1}{q-1}}dx\right)^{q-1}\leq C(n,q,\theta_{1},\theta_{2}),
\end{align}
for any ball $B\subset\mathbb{R}^{n}$. For any $R>0$ and $\bar{x}\in\mathbb{R}^{n}$, the ball $B_{R}(\bar{x})$ must belong to one of the following two types: $|\bar{x}|\geq3R$ and $|\bar{x}|<3R$. On one hand, if $|\bar{x}|\geq3R$, then we have
\begin{align}\label{QMDA010}
\frac{2}{3}|\bar{x}|\leq|\bar{x}|-R\leq|x|\leq|\bar{x}|+R\leq\frac{4}{3}|\bar{x}|, \quad\text{for }x\in B_{R}(\bar{x}).
\end{align}
Applying \eqref{DMZQM002}--\eqref{QDWZ013} with $B_{2R}(\bar{x})$ and $B'_{2R}(\bar{x}')$ replaced by $B_{R}(\bar{x})$ and $B'_{R}(\bar{x}')$, it follows from \eqref{QMDA010} that
\begin{align*}
\int_{B_{R}(\bar{x})}|x'|^{\theta_{1}}|x|^{\theta_{2}}dx\leq&C(\theta_{2})|\bar{x}|^{\theta_{2}}\int_{B_{R}(\bar{x})}|x'|^{\theta_{1}}dx\leq C(\theta_{2})|\bar{x}|^{\theta_{2}+1}\int_{B_{R}'(\bar{x}')}|x'|^{\theta_{1}}dx'\notag\\
\leq&C(n,\theta_{1},\theta_{2})|\bar{x}|^{n+\theta_{1}+\theta_{2}},
\end{align*}
and
\begin{align*}
\int_{B_{R}(\bar{x})}|x'|^{-\frac{\theta_{1}}{q-1}}|x|^{-\frac{\theta_{2}}{q-1}}dx\leq&C(q,\theta_{2})|\bar{x}|^{-\frac{\theta_{2}}{q-1}}\int_{B_{R}(\bar{x})}|x'|^{-\frac{\theta_{1}}{q-1}}dx\notag\\
\leq& C(q,\theta_{2})|\bar{x}|^{-\frac{\theta_{2}}{q-1}+1}\int_{B_{R}'(\bar{x}')}|x'|^{-\frac{\theta_{1}}{q-1}}dx'\notag\\
\leq& C(n,q,\theta_{1},\theta_{2})|\bar{x}|^{n-\frac{\theta_{1}+\theta_{2}}{p-1}},
\end{align*}
where we require that $-(n-1)<\theta_{1}<(n-1)(q-1)$ and $\theta_{2}\in\mathbb{R}$. Combining these two relations, we obtain that \eqref{MDE016} holds in the case of $|\bar{x}|\geq3R$.

On the other hand, if $|\bar{x}|<3R$, we have $|x|\leq4R$ for $x\in B_{R}(\bar{x})$. Therefore, it follows from Lemma \ref{MWQAZ090} that
\begin{align*}
\dashint_{B_{R}(\bar{x})}|x'|^{\theta_{1}}|x|^{\theta_{2}}dx\leq 4^{n}\dashint_{B_{4R}(0)}|x'|^{\theta_{1}}|x|^{\theta_{2}}dx\leq C(n,\theta_{1},\theta_{2})R^{\theta_{1}+\theta_{2}},
\end{align*}
and
\begin{align*}
\left(\dashint_{B_{R}(\bar{x})}|x'|^{-\frac{\theta_{1}}{q-1}}|x|^{-\frac{\theta_{2}}{q-1}}dx\right)^{q-1}\leq& 4^{n(q-1)}\left(\dashint_{B_{4R}(0)}|x'|^{-\frac{\theta_{1}}{q-1}}|x|^{-\frac{\theta_{2}}{q-1}}dx\right)^{q-1}\notag\\
\leq& C(n,q,\theta_{1},\theta_{2})R^{-\theta_{1}-\theta_{2}},
\end{align*}
where these two relations hold if $(\theta_{1},\theta_{2}),(-\frac{\theta_{1}}{q-1},-\frac{\theta_{2}}{q-1})\in\mathcal{A}\cup\mathcal{B}$, that is, $(\theta_{1},\theta_{2})\in(\mathcal{A}\cup\mathcal{B})\cap(\mathcal{C}_{q}\cup\mathcal{D}_{q})$. Therefore, \eqref{QMDA010} holds for any $B\subset\mathbb{R}^{n}$. The proof is complete.

\end{proof}

Denote $d\mu:=wdx=|x'|^{\theta_{1}}|x|^{\theta_{2}}dx.$ Combining Theorem 15.21 and Corollary 15.35 in \cite{HKM2006} and Theorem \ref{THM000Z} above, we obtain the following anisotropic weighted Poincar\'{e} inequality.
\begin{corollary}\label{QWZM090}
For $n\geq2$ and $1<q<\infty$, let $(\theta_{1},\theta_{2})\in[(\mathcal{A}\cup\mathcal{B})\cap(C_{q}\cup\mathcal{D}_{q})]\cup\{\theta_{1}=0,\,\theta_{2}>-n\}$. Then for any $B:=B_{R}(\bar{x})\subset\mathbb{R}^{n}$, $R>0$, and $\varphi\in W^{1,q}(B,w)$,
\begin{align}\label{DQ003}
\int_{B}|\varphi-\varphi_{B}|^{q}d\mu\leq C(n,q,\theta_{1},\theta_{2})R^{q}\int_{B}|\nabla\varphi|^{q}d\mu,
\end{align}
where $\varphi_{B}=\frac{1}{\mu(B)}\int_{B}\varphi d\mu.$
\end{corollary}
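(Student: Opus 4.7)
The plan is to reduce the statement to a direct invocation of the general weighted Poincar\'{e} theory for Muckenhoupt $A_{q}$-weights on spaces of homogeneous type, since essentially all the heavy lifting has already been done in Theorems \ref{Lem009} and \ref{THM000Z}. Specifically, once we know that $d\mu=wdx$ is doubling and that $w\in A_{q}$, we can simply cite Theorem 15.21 and Corollary 15.35 of \cite{HKM2006}, which assert that any $A_{q}$-weight on $\mathbb{R}^{n}$ admits the $(q,q)$-Poincar\'{e} inequality on balls with the correct $R^{q}$ scaling and with the constant depending only on $n,q$ and the $A_{q}$ constant of $w$.

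The steps I would carry out are as follows. First, I would split the parameter region into two pieces: the anisotropic piece $(\theta_{1},\theta_{2})\in(\mathcal{A}\cup\mathcal{B})\cap(\mathcal{C}_{q}\cup\mathcal{D}_{q})$, where the $A_{q}$-property of $w$ is exactly the content of Theorem \ref{THM000Z}; and the isotropic piece $\theta_{1}=0,\ \theta_{2}>-n$, where $w=|x|^{\theta_{2}}$ and the $A_{q}$-property (for the relevant range of $\theta_{2}$ compared to $q$) is the classical computation recalled on pages 505--506 of \cite{G2014}. Second, having identified $w$ as an $A_{q}$-weight in each case, I would apply Theorem 15.21 of \cite{HKM2006} on the ball $B=B_{R}(\bar{x})$ to $\varphi\in W^{1,q}(B,w)$ to obtain the weighted $(1,q)$-Poincar\'{e} inequality, and then invoke Corollary 15.35 of \cite{HKM2006} (which, for $A_{q}$-weights, upgrades the weak form to the genuine $(q,q)$-form) to deduce
\[
\int_{B}|\varphi-\varphi_{B}|^{q}\,d\mu\leq C\,R^{q}\int_{B}|\nabla\varphi|^{q}\,d\mu,
\]
with $C=C(n,q,\theta_{1},\theta_{2})$. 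The explicit factor $R^{q}$ comes from the standard scaling argument: dilating $\varphi(x)\mapsto\varphi(\bar{x}+Ry)$ reduces everything to the unit ball, and the $A_{q}$ constant of $w$ is scale-invariant under this dilation since $w$ is a product of homogeneous weights.

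The work is almost entirely encapsulated in the earlier sections, so there is no serious obstacle; the only mild subtlety is ensuring that the isotropic endpoint case $\theta_{1}=0$ is covered by the hypotheses of the cited HKM theorems, which requires remarking that $|x|^{\theta_{2}}$ is $A_{q}$ for $\theta_{2}$ in an interval containing $(-n,0]$ and extending into positive values depending on $q$, and that the doubling property established in Theorem \ref{Lem009} gives the space-of-homogeneous-type structure needed to invoke the general Poincar\'{e} machinery. With these remarks the corollary follows immediately.
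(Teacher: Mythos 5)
Your proposal is correct and is essentially identical to the paper's own proof, which consists of exactly this citation of Theorem 15.21 and Corollary 15.35 of \cite{HKM2006} combined with Theorem \ref{THM000Z}. One small imprecision: in the isotropic case $\theta_{1}=0$ the weight $|x|^{\theta_{2}}$ is an $A_{q}$-weight only for $-n<\theta_{2}<n(q-1)$, so the full range $\theta_{2}>-n$ for every $1<q<\infty$ does not follow from the $A_{q}$ property alone but from the $q$-admissibility of radial power weights established in Corollary 15.35 of \cite{HKM2006} via quasiconformal mappings, as the paper notes in the remark following the corollary.
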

\begin{remark}
It is worth emphasizing that according to Corollary 15.35 in \cite{HKM2006}, \eqref{DQ003} holds for any $(\theta_{1},\theta_{2})\in\{\theta_{1}=0,\,\theta_{2}>-n\}$ and $1<q<\infty$. This conclusion is very strong, which is achieved by combining the theories of $A_{q}$-weights and quasiconformal mappings, see Chapter 15 of \cite{HKM2006} for further details.

\end{remark}

Making use of the anisotropic weighted Poincar\'{e} inequality in Corollary \ref{QWZM090}, we can establish the corresponding weighted isoperimetric inequality of De Giorgi type as follows.
\begin{prop}\label{prop002}
For $n\geq2$ and $1<q<\infty$, let $(\theta_{1},\theta_{2})\in[(\mathcal{A}\cup\mathcal{B})\cap(C_{q}\cup\mathcal{D}_{q})]\cup\{\theta_{1}=0,\,\theta_{2}>-n\}$. Then for any $R>0$, $l>k$ and $u\in W^{1,q}(B_{R},w)$,
\begin{align}\label{pro001}
&(l-k)^{q}\left(\int_{\{u\geq l\}\cap B_{R}}d\mu\right)^{q}\int_{\{u\leq k\}\cap B_{R}}d\mu\notag\\
&\leq C(n,q,\theta_{1},\theta_{2})R^{q(n+\theta_{1}+\theta_{2}+1)}\int_{\{k<u<l\}\cap B_{R}}|\nabla u|^{q}d\mu,
\end{align}
and
\begin{align}\label{pro002}
&(l-k)^{q}\left(\int_{\{u\leq k\}\cap B_{R}}d\mu\right)^{q}\int_{\{u\geq l\}\cap B_{R}}d\mu\notag\\
&\leq C(n,q,\theta_{1},\theta_{2})R^{q(n+\theta_{1}+\theta_{2}+1)}\int_{\{k<u<l\}\cap B_{R}}|\nabla u|^{q}d\mu,
\end{align}
where $d\mu=wdx=|x'|^{\theta_{1}}|x|^{\theta_{2}}dx.$
\end{prop}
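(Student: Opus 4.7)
The plan is to reduce both inequalities to the anisotropic weighted Poincar\'e inequality of Corollary \ref{QWZM090} applied to the truncation
\begin{align*}
v := \min\{l, \max\{u,k\}\} - k,
\end{align*}
which satisfies $0 \leq v \leq l-k$ pointwise, $v \in W^{1,q}(B_R, w)$, and $\nabla v = \nabla u \, \chi_{\{k<u<l\}}$ almost everywhere. The crucial observation is that $v$ saturates its two extreme values on precisely the sets appearing in \eqref{pro001}--\eqref{pro002}: writing $A := \{u\geq l\}\cap B_R$ and $B_0 := \{u\leq k\}\cap B_R$, one has $v\equiv l-k$ on $A$ and $v\equiv 0$ on $B_0$.

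Setting $m := \mu(B_R)$ and splitting the average $v_{B_R} = m^{-1}\int_{B_R} v \, d\mu$ across $A$, $B_0$ and $\{k<u<l\}\cap B_R$, the bound $0 \leq v \leq l-k$ on the middle set yields the two-sided estimate
\begin{align*}
(l-k)\frac{\mu(A)}{m} \;\leq\; v_{B_R} \;\leq\; (l-k)\Bigl(1 - \frac{\mu(B_0)}{m}\Bigr),
\end{align*}
so that $v_{B_R} \geq (l-k)\mu(A)/m$ and $(l-k) - v_{B_R} \geq (l-k)\mu(B_0)/m$. Applying \eqref{DQ003} to $v$ and then restricting the left-hand side to $B_0$, where $|v-v_{B_R}|=v_{B_R}$, gives
\begin{align*}
(l-k)^q\frac{\mu(A)^q}{m^q}\,\mu(B_0) \;\leq\; \int_{B_R}|v-v_{B_R}|^q d\mu \;\leq\; C R^q \int_{\{k<u<l\}\cap B_R}|\nabla u|^q d\mu.
\end{align*}
The hypothesis on $(\theta_1,\theta_2)$ entails $(\theta_1,\theta_2)\in\mathcal{A}\cup\mathcal{B}$ (or the classical single-weight case $\theta_1=0$, $\theta_2>-n$), so Lemma \ref{MWQAZ090} supplies $m \leq C R^{n+\theta_1+\theta_2}$, and absorbing $m^q$ produces the exponent $q(n+\theta_1+\theta_2+1)$ claimed in \eqref{pro001}. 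The companion inequality \eqref{pro002} follows by the symmetric argument, restricting instead to $A$ and using the lower bound on $(l-k)-v_{B_R}$.

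There is no substantive analytic obstacle beyond Corollary \ref{QWZM090} and the volume estimate of Lemma \ref{MWQAZ090}; the content of the proposition is really that De Giorgi's isoperimetric inequality transfers verbatim to the anisotropic weighted setting once the truncation $v$ is chosen. The only point requiring care is the bookkeeping that matches each of the two lower bounds $v_{B_R}\geq (l-k)\mu(A)/m$ and $(l-k)-v_{B_R}\geq (l-k)\mu(B_0)/m$ to the correct side ($B_0$ versus $A$) of the Poincar\'e estimate, so as to recover \eqref{pro001} and \eqref{pro002} in turn.
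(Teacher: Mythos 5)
Your proof is correct and follows essentially the same route as the paper: apply the anisotropic weighted Poincar\'e inequality of Corollary \ref{QWZM090} to the truncation of $u$ at levels $k$ and $l$, restrict the left-hand side to the set where the truncation is constant, and use $\mu(B_R)\le CR^{n+\theta_1+\theta_2}$ from Lemma \ref{MWQAZ090}. The only cosmetic difference is that you derive both inequalities from the single function $v$ (restricting to $\{u\le k\}$ for one and to $\{u\ge l\}$ for the other), whereas the paper uses the two truncations $u_1=\inf\{u,l\}-\inf\{u,k\}$ and $u_2=\sup\{u,l\}-\sup\{u,k\}=(l-k)-u_1$, which amounts to the same computation.
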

\begin{remark}
Since the index $q>1$, we have to establish two isoperimetric inequalities in Proposition \ref{prop002}, which are used to capture the decaying rates of the distribution function in Lemmas \ref{lem005ZZW} and \ref{lem005} below. Meanwhile, it also causes more complex calculations in the proofs of Lemmas \ref{lem005ZZW} and \ref{lem005}.

\end{remark}

\begin{proof}
\noindent{\bf Step 1.}
Set
\begin{align*}
u_{1}=\inf\{u,l\}-\inf\{u,k\},\quad\bar{u}_{1}=\frac{1}{\mu(B_{R})}\int_{B_{R}}u_{1}d\mu.
\end{align*}
First, we have
\begin{align*}
\int_{\{u_{1}=0\}\cap B_{R}}\bar{u}_{1}^{q}d\mu=&\frac{1}{(\mu(B_{R}))^{q}}\left(\int_{B_{R}}u_{1}d\mu\right)^{q}\int_{\{u\leq k\}\cap B_{R}}d\mu\notag\\
\geq&\frac{C(l-k)^{q}}{R^{q(n+\theta_{1}+\theta_{2})}}\left(\int_{\{u\geq l\}\cap B_{R}}d\mu\right)^{q}\int_{\{u\leq k\}\cap B_{R}}d\mu.
\end{align*}
Second, it follows from Corollary \ref{QWZM090} that
\begin{align*}
\int_{\{u_{1}=0\}\cap B_{R}}|\bar{u}_{1}|^{q}d\mu\leq&\int_{B_{R}}|u_{1}-\bar{u}_{1}|^{q}d\mu\leq CR^{q}\int_{B_{R}}|\nabla u_{1}|^{q}d\mu\notag\\
=&CR^{q}\int_{\{k<u<l\}\cap B_{R}}|\nabla u|^{q}d\mu.
\end{align*}
The proof of \eqref{pro001} is finished.

\noindent{\bf Step 2.}
Denote
\begin{align*}
u_{2}=\sup\{u,l\}-\sup\{u,k\},\quad\bar{u}_{2}=\frac{\int_{B_{R}}u_{2}d\mu}{|B_{R}|_{\mu}}.
\end{align*}
By the same argument as before, we have
\begin{align*}
\int_{\{u_{2}=0\}\cap B_{R}}\bar{u}_{2}^{q}d\mu=&\frac{1}{(\mu(B_{R}))^{q}}\left(\int_{B_{R}}u_{2}d\mu\right)^{q}\int_{\{u\geq l\}\cap B_{R}}d\mu\notag\\
\geq&\frac{C(l-k)^{q}}{R^{q(n+\theta_{1}+\theta_{2})}}\left(\int_{\{u\leq k\}\cap B_{R}}d\mu\right)^{q}\int_{\{u\geq l\}\cap B_{R}}d\mu,
\end{align*}
and
\begin{align*}
\int_{\{u_{2}=0\}\cap B_{R}}|\bar{u}_{2}|^{q}d\mu\leq&\int_{B_{R}}|u_{2}-\bar{u}_{2}|^{q}d\mu\leq CR^{q}\int_{B_{R}}|\nabla u_{2}|^{q}d\mu\notag\\
=&CR^{q}\int_{\{k<u<l\}\cap B_{R}}|\nabla u|^{q}d\mu.
\end{align*}
The proof is complete.

\end{proof}

\section{Regularity for solutions to degenerate elliptic equations with anisotropic weights}\label{SEC003}

Throughout this section, denote $d\mu:=wdx=|x'|^{\theta_{1}}|x|^{\theta_{2}}dx.$ The first step is to establish a Caccioppoli inequality for the truncated solution.
\begin{lemma}\label{lem003ZZW}
Let $u$ be the solution of problem \eqref{PROBLEM001ZW}. Then for any nonnegative $\eta\in C^{\infty}_{0}(B_{R}(x_{0}))$ with any $B_{R}(x_{0})\subset B_{1}$,
\begin{align*}
\int_{B_{R}(x_{0})}|\nabla(v\eta)|^{p}wdx\leq C(n,p,\lambda)\int_{B_{R}(x_{0})}|\nabla\eta|^{p}|v|^{p}wdx,
\end{align*}
where $v=(u-k)^{+}\,\mathrm{or}\,(u-k)^{-}$ with $k\geq0$.

\end{lemma}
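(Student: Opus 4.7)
The statement is the classical De Giorgi-type Caccioppoli inequality for the truncated function $v$, and because the non-negative weight $w$ appears identically on both sides of the desired bound it plays only a passive role; the proof should be a direct translation of the unweighted argument, with Lebesgue measure replaced throughout by $d\mu=w\,dx$. Accordingly, I would test the weak formulation
$$\int_{B_1} A\, w\, |\nabla u|^{p-2}\nabla u\cdot\nabla\varphi\, dx = 0$$
against $\varphi = v\eta^p$. This is an admissible test function: since $u\in L^\infty\cap W^{1,p}(B_1,w)$ the truncation $v$ lies in $W^{1,p}(B_1,w)$ with $\nabla v = \chi_{\{u>k\}}\nabla u$ in the first case and $\nabla v = \chi_{\{u<k\}}\nabla u$ in the second (up to a sign, depending on the convention for $(u-k)^-$), and $\eta^p\in C^\infty_0(B_R(x_0))$ with $B_R(x_0)\subset B_1$ turns the product into an element of $W_0^{1,p}(B_1,w)$.

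Expanding $\nabla\varphi = \eta^p\nabla v + p\,\eta^{p-1} v\,\nabla\eta$, the pointwise identity $|\nabla u|^{p-2}\nabla u\cdot\nabla v = |\nabla v|^p$ (which holds a.e.\ on the level set where $v$ is non-constant and trivially elsewhere) together with the uniform ellipticity \eqref{WQ090} yields, after isolating the principal term,
$$\lambda^{-1}\int_{B_R(x_0)} |\nabla v|^p \eta^p\, w\, dx \;\leq\; p\lambda \int_{B_R(x_0)} |\nabla v|^{p-1}\eta^{p-1}\,|v|\,|\nabla\eta|\, w\, dx.$$
Young's inequality with conjugate exponents $p/(p-1)$ and $p$ bounds the right-hand integrand by $\varepsilon\,|\nabla v|^p\eta^p + C(\varepsilon,p)\,|v|^p|\nabla\eta|^p$, and choosing $\varepsilon$ small enough to absorb the first piece into the left-hand side produces
$$\int_{B_R(x_0)} |\nabla v|^p \eta^p\, w\, dx \;\leq\; C(p,\lambda) \int_{B_R(x_0)} |v|^p |\nabla\eta|^p\, w\, dx.$$
The elementary inequality $|\nabla(v\eta)|^p \leq 2^{p-1}(\eta^p|\nabla v|^p + |v|^p|\nabla\eta|^p)$ then converts this bound into the form stated in the lemma.

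I do not anticipate a substantive obstacle here; the only point requiring any care is the admissibility step above, which rests on the chain rule for truncations in weighted Sobolev spaces and on the uniform boundedness of $u$. The same argument extends verbatim to the more general divergence-form structure in Remark \ref{WQAN016}, since only the two structural inequalities on $\mathcal{G}(x,\xi)$ are ever used, in place of \eqref{WQ090}. The real content of the lemma lies in its subsequent use in the De Giorgi iteration, and it is there — not in its proof — that the anisotropy of $w$ will genuinely matter, through the isoperimetric inequality of Proposition \ref{prop002}.
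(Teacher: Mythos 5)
Your proposal is correct and follows essentially the same route as the paper: test the weak formulation with $\varphi=v\eta^{p}$ (resp.\ $-v\eta^{p}$ for the negative truncation), use that $\nabla v$ agrees with $\nabla u$ on the relevant level set together with the ellipticity of $A$, absorb the cross term via Young's inequality, and finish with $|\nabla(v\eta)|^{p}\leq 2^{p-1}(\eta^{p}|\nabla v|^{p}+|v|^{p}|\nabla\eta|^{p})$. Your added remarks on admissibility of the test function and on the extension to the structure conditions of Remark \ref{WQAN016} are consistent with, and slightly more explicit than, the paper's argument.
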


\begin{proof}
First, pick test function $\varphi=v\eta^{p}$ if $v=(u-k)^{+}$. Since
\begin{align*}
0=\int_{B_{R}(x_{0})}Aw|\nabla u|^{p-2}\nabla u\cdot\nabla\varphi=\int_{B_{R}(x_{0})}Aw|\nabla v|^{p-2}\nabla v\cdot\nabla\varphi,
\end{align*}
then it follows from Young's inequality that
\begin{align}\label{WAD002}
&\lambda\int_{B_{R}(x_{0})}|\nabla v|^{p}\eta^{p}wdx\notag\\
&\leq\int_{B_{R}(x_{0})}Aw|\nabla v|^{p}\eta^{p}=-p\int_{B_{R}(x_{0})}Aw|\nabla v|^{p-2}\nabla v\cdot\nabla\eta v\eta^{p-1}\notag\\
&\leq p\lambda\int_{B_{R}(x_{0})}|\eta\nabla v|^{p-1}|v\nabla\eta|wdx\notag\\
&\leq\frac{\lambda}{2}\int_{B_{R}(x_{0})}|\nabla v|^{p}\eta^{p}wdx+C\int_{B_{R}(x_{0})}|v|^{p}|\nabla\eta|^{p}wdx,
\end{align}
which yields that
\begin{align*}
\int_{B_{R}(x_{0})}|\nabla(v\eta)|^{p}wdx\leq&2^{p-1}\int_{B_{R}(x_{0})}\big(|\eta\nabla v|^{p}+|v\nabla\eta|^{p}\big)wdx\notag\\
\leq& C\int_{B_{R}(x_{0})}|\nabla\eta|^{p}|v|^{p}wdx.
\end{align*}

Second, choose test function $\varphi=-v\eta^{2}$ if $v=(u-k)^{-}$. Then we have
\begin{align*}
0=\int_{B_{R}(x_{0})}Aw|\nabla u|^{p-2}\nabla u\cdot\nabla\varphi=-\int_{B_{R}(x_{0})}Aw|\nabla v|^{p-2}\nabla v\cdot\nabla\varphi.
\end{align*}
Therefore, in exactly the same way to \eqref{WAD002}, we obtain that Lemma \ref{lem003ZZW} holds.

\end{proof}

We now improve the oscillation of the solution $u$ in a small domain provided that $u$ is small on a large portion of a larger domain.
\begin{lemma}\label{LEM0035ZZW}
Assume that $n\geq2$, $(\theta_{1},\theta_{2})\in(\mathcal{A}\cup\mathcal{B})\cap\mathcal{F}$, $1<p<n+\theta_{1}+\theta_{2}$. For $R\in(0,1)$, let $0\leq m\leq\inf\limits_{B_{R}}u\leq\sup\limits_{B_{R}}u\leq M\leq\overline{M}$. Then there exists a small constant $0<\tau_{0}=\tau_{0}(n,p,\theta_{1},\theta_{2},\lambda)<1$ such that for any $\varepsilon>0$ and $0<\tau<\tau_{0}$,

$(a)$ if
\begin{align}\label{ZWZ007ZZW}
|\{x\in B_{R}: u>M-\varepsilon\}|_{\mu}\leq\tau|B_{R}|_{\mu},
\end{align}
then
\begin{align}\label{DZ001ZZW}
u\leq M-\frac{\varepsilon}{2},\quad\text{for $x\in B_{R/2}$;}
\end{align}

$(b)$ if
\begin{align*}
|\{x\in B_{R}: u<m+\varepsilon\}|_{\mu}\leq\tau|B_{R}|_{\mu},
\end{align*}
then
\begin{align}\label{ZWZ009ZZW}
u\geq m-\frac{\varepsilon}{2},\quad\text{for $x\in B_{R/2}$.}
\end{align}

\end{lemma}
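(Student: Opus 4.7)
The plan is to prove (a) by a De Giorgi $L^\infty$ iteration on a sequence of shrinking balls and rising truncation levels; (b) will follow from the same scheme applied to the negative truncation $(u-k)^-$ with descending levels $\tilde k_j \searrow m + \varepsilon/2$, since Lemma \ref{lem003ZZW} already supplies the matching Caccioppoli estimate in both sign cases.

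First I would fix $R_j := R/2 + R/2^{j+1}$, $k_j := M - \varepsilon/2 - \varepsilon/2^{j+1}$, and cutoffs $\eta_j \in C_{c}^\infty(B_{R_j})$ with $\eta_j \equiv 1$ on $B_{R_{j+1}}$, $|\nabla\eta_j| \leq C\,2^j/R$, and then let
\[
A_j := \mu(\{u > k_j\} \cap B_{R_j}), \qquad y_j := A_j / \mu(B_R).
\]
Since $k_j \nearrow M - \varepsilon/2$ and $R_j \searrow R/2$, proving $y_j \to 0$ would establish $u \leq M - \varepsilon/2$ almost everywhere on $B_{R/2}$, which is \eqref{DZ001ZZW}.

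The iteration step chains two estimates. Applying Lemma \ref{lem003ZZW} to $v_j := (u - k_j)^+$ with cutoff $\eta_j$ and invoking $v_j \leq M - k_j \leq \varepsilon$ on its support yields
\[
\int_{B_{R_j}} |\nabla(v_j \eta_j)|^p\, d\mu \leq C(n,p,\lambda)\,\frac{(2^j \varepsilon)^p}{R^p}\, A_j.
\]
On the range $(\theta_1, \theta_2) \in (\mathcal{A}\cup\mathcal{B})\cap\mathcal{F}$ with $1 < p < n+\theta_1+\theta_2$, the weighted Sobolev inequality (of anisotropic Caffarelli--Kohn--Nirenberg type, as in \cite{LY2021}, or derivable from the $A_p$-machinery of Chapter 15 of \cite{HKM2006} in conjunction with Theorem \ref{THM000Z}) supplies an exponent gain $\kappa = \kappa(n,p,\theta_1,\theta_2) > 1$ in the scale-invariant form
\[
\Bigl(\mu(B_{R_j})^{-1}\!\int_{B_{R_j}} (v_j \eta_j)^{p\kappa}\, d\mu\Bigr)^{1/(p\kappa)} \leq C R\,\Bigl(\mu(B_{R_j})^{-1}\!\int_{B_{R_j}} |\nabla(v_j \eta_j)|^p\, d\mu\Bigr)^{1/p}.
\]
Chaining these two bounds, restricting the left-hand side to $B_{R_{j+1}}$ (where $\eta_j \equiv 1$), and using $v_j \geq \varepsilon/2^{j+2}$ on $\{u > k_{j+1}\}$, one obtains a bound on $A_{j+1}$ in which all powers of $\varepsilon$ and of $\mu(B_R) \sim R^{n+\theta_1+\theta_2}$ cancel, leaving the scale-invariant recursion
\[
y_{j+1} \leq C\, b^{\,j}\, y_j^{\kappa}, \qquad C = C(n,p,\theta_1,\theta_2,\lambda),\ b = b(p,\kappa) > 1.
\]

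A standard fast-decay iteration lemma then yields $\tau_0 = \tau_0(n,p,\theta_1,\theta_2,\lambda) \in (0,1)$ such that $y_0 \leq \tau_0$ forces $y_j \to 0$; hypothesis \eqref{ZWZ007ZZW} is exactly $y_0 \leq \tau$, so picking $\tau < \tau_0$ closes part (a), and (b) follows by the symmetric argument indicated above. The principal obstacle is securing the weighted Sobolev inequality with a genuine exponent gain $\kappa > 1$ that is scale-invariant and effective throughout the anisotropic range $(\mathcal{A}\cup\mathcal{B})\cap\mathcal{F}$ with $1 < p < n+\theta_1+\theta_2$; the secondary difficulty is bookkeeping all powers of $\mu(B_R)$ through the Caccioppoli--Sobolev chain so that $\tau_0$ depends neither on $R$ nor on $\overline{M}$.
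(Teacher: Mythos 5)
Your proposal is correct and follows essentially the same route as the paper: De Giorgi iteration with the levels $k_j\nearrow M-\varepsilon/2$ on shrinking balls, the Caccioppoli inequality of Lemma \ref{lem003ZZW} chained with the anisotropic Caffarelli--Kohn--Nirenberg inequality of \cite{LY2021} (with gain $\chi=\frac{n+\theta_1+\theta_2}{n+\theta_1+\theta_2-p}$), the observation that the powers of $R$ cancel exactly, and the symmetric argument with $(u-\tilde k_i)^-$ for part (b). The only cosmetic differences are that you phrase the Sobolev step in normalized scale-invariant form and close with the abstract fast-decay lemma, whereas the paper tracks the $R$-exponents explicitly and iterates the recursion by hand.
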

\begin{remark}
The assumed condition in \eqref{ZWZ007ZZW} is natural, since
\begin{align*}
|\{x\in B_{R}:\,u>M-\varepsilon\}|_{\mu}\rightarrow0,\quad\mathrm{as}\; \varepsilon\rightarrow0.
\end{align*}
This fact also implies that the value of $\tau$ can be chosen to satisfy that $\tau\rightarrow0$, as $\varepsilon\rightarrow0$. Then the key to applying Lemma \ref{LEM0035ZZW} lies in making clear the dependency between $\tau$ and $\varepsilon$ in condition \eqref{ZWZ007ZZW}. The purpose will be achieved by establishing the explicit decaying estimates in terms of the distribution function of $u$ in Lemma \ref{lem005ZZW} below.
\end{remark}

\begin{proof}
\noindent{\bf Step 1.}
For $\varepsilon>0$ and $i=0,1,2,...,$ let
\begin{align*}
r_{i}=\frac{R}{2}+\frac{R}{2^{i+1}},\quad k_{i}=M-\varepsilon+\frac{\varepsilon}{2}(1-2^{-i}).
\end{align*}
Take a cut-off function $\eta_{i}\in C_{0}^{\infty}(B_{r_{i}})$ satisfying that $\eta_{i}=1$ in $B_{r_{i+1}}$, $0\leq\eta_{i}\leq1$, $|\nabla\eta_{i}|\leq C(r_{i}-r_{i+1})^{-1}$ in $B_{r_{i}}$. For $k\in[m,M]$ and $\rho\in(0,R]$, write $v_{i}=(u-k_{i})^{+}$ and $A(k,\rho)=\{x\in B_{\rho}: u>k\}$. By Theorem 1.1 in \cite{LY2021}, we have the following anisotropic Caffarelli-Kohn-Nirenberg type inequality:
\begin{align*}
\|u\|_{L^{\frac{(n+\theta_{1}+\theta_{2})p}{n+\theta_{1}+\theta_{2}-p}}(B_{R},w)}\leq C(n,p,\theta_{1},\theta_{2})\|\nabla u\|_{L^{p}(B_{R},w)},\quad \forall\;u\in W_{0}^{1,p}(B_{R},w),
\end{align*}
which, together with Lemma \ref{lem003ZZW}, reads that
\begin{align*}
\left(\int_{B_{R}}|\eta_{i}v_{i}|^{p\chi}wdx\right)^{\frac{1}{\chi}}\leq C\int_{B_{R}}|v_{i}|^{p}|\nabla\eta_{i}|^{p}wdx,\quad\chi=\frac{n+\theta_{1}+\theta_{2}}{n+\theta_{1}+\theta_{2}-p}.
\end{align*}
Since
\begin{align*}
\int_{B_{R}}|v_{i}|^{p}|\nabla\eta_{i}|^{p}wdx\leq\frac{C(M-k_{i})^{p}}{(r_{i}-r_{i+1})^{p}}|A(k_{i},r_{i})|_{\mu},
\end{align*}
and
\begin{align*}
\left(\int_{B_{R}}|\eta_{i}v_{i}|^{p\chi}wdx\right)^{\frac{1}{\chi}}\geq(k_{i+1}-k_{i})^{p}|A(k_{i+1},r_{i+1})|_{\mu}^{\frac{1}{\chi}},
\end{align*}
it then follows that there exists a positive constant $i_{0}=i_{0}(n,p,\theta_{1},\theta_{2})>0$ such that for $i\geq i_{0}$,
\begin{align*}
\frac{|A(k_{i+1},r_{i+1})|_{\mu}}{|B_{R}|_{\mu}}\leq&(C2^{2p(i+2)})^{\chi}R^{(\theta_{1}+\theta_{2}+n)(\chi-1)-p\chi}\left(\frac{|A(k_{i},r_{i})|_{\mu}}{|B_{R}|_{\mu}}\right)^{\chi}\notag\\
=&(C2^{2p(i+2)})^{\chi}\left(\frac{|A(k_{i},r_{i})|_{\mu}}{|B_{R}|_{\mu}}\right)^{\chi}\notag\\
\leq&\prod\limits^{i}_{s=0}(C2^{2p(i+2-s)})^{\chi^{s+1}}\left(\frac{|A(k_{0},r_{0})|_{\mu}}{|B_{R}|_{\mu}}\right)^{\chi^{i+1}}\notag\\
\leq&(C^{\ast})^{i+1}\left(\frac{|A(k_{0},r_{0})|_{\mu}}{|Q_{R}|_{\mu}}\right)^{\chi(i+1)}.
\end{align*}
Fix $\tau_{0}=(C^{\ast})^{-\chi}$. Then we deduce that for any $\varepsilon>0$ and $0<\tau<\tau_{0}$, if \eqref{ZWZ007ZZW} holds, then
\begin{align*}
\frac{|A(k_{i+1},r_{i+1})|_{\mu}}{|B_{R}|_{\mu}}\leq (C^{\ast}\tau^{\chi})^{i+1}=\left(\frac{\tau}{\tau_{0}}\right)^{\chi(i+1)}\rightarrow0,\quad\mathrm{as}\;i\rightarrow\infty.
\end{align*}
Hence, \eqref{DZ001ZZW} is proved.

\noindent{\bf Step 2.} Similarly as above, set
\begin{align*}
r_{i}=\frac{R}{2}+\frac{R}{2^{i+1}},\quad \tilde{k}_{i}=m+\varepsilon-\frac{\varepsilon}{2}(1-2^{-i}),\quad i\geq0.
\end{align*}
For $k\in[m,M]$ and $\rho\in(0,R]$, let $\tilde{v}_{i}=(u-\tilde{k}_{i})^{-}$ and $\tilde{A}(k,\rho)=\{x\in B_{\rho}: u<k\}$. Then applying the proof of \eqref{DZ001ZZW} with minor modification, we obtain that \eqref{ZWZ009ZZW} also holds. The proof is finished.

\end{proof}

The decaying estimates for the distribution function of the solution $u$ are established as follows.
\begin{lemma}\label{lem005ZZW}
Suppose that $n\geq2$, $(\theta_{1},\theta_{2})\in[(\mathcal{A}\cup\mathcal{B})\cap(C_{q}\cup\mathcal{D}_{q})\cap\mathcal{F}]\cup\{\theta_{1}=0,\theta_{2}>-(n-1)\}$, $1<q<p<n+\theta_{1}+\theta_{2}$, $0<\gamma<1$, $0<R<\frac{1}{2}$, and $0\leq m\leq\inf\limits_{B_{2R}}u\leq\sup\limits_{B_{2R}}u\leq M$. Then for any $\varepsilon>0$,

$(a)$ if
\begin{align}\label{VD008}
\frac{|\{x\in B_{R}:u>M-\varepsilon\}|_{\mu}}{|B_{R}|_{\mu}}\leq1-\gamma,
\end{align}
then for any $j\geq1$,
\begin{align}\label{DEC001ZZW}
\frac{|\{x\in B_{R}:u>M-\frac{\varepsilon}{2^{j}}\}|_{\mu}}{|B_{R}|_{\mu}}\leq\frac{C}{\sqrt[q]{\gamma}j^{\frac{p-q}{pq}}};
\end{align}

$(b)$ if
\begin{align}\label{VDC006}
\frac{|\{x\in B_{R}:u<m+\varepsilon\}|_{\mu}}{|B_{R}|_{\mu}}\leq1-\gamma,
\end{align}
then for any $j\geq1$,
\begin{align*}
\frac{|\{x\in B_{R}:u<m+\frac{\varepsilon}{2^{j}}\}|_{\mu}}{|B_{R}|_{\mu}}\leq\frac{C}{\sqrt[q]{\gamma}j^{\frac{p-q}{pq}}},
\end{align*}
where $C=C(n,p,q,\theta_{1},\theta_{2},\lambda)$.

\end{lemma}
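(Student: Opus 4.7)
The plan is to run a De Giorgi-type iteration on dyadic level sets, feeding the isoperimetric inequality (Proposition \ref{prop002}) with gradient information obtained from a H\"older interpolation between $L^q$ and $L^p$ and the Caccioppoli estimate (Lemma \ref{lem003ZZW}). For part (a), I would introduce the levels $k_j := M-\varepsilon/2^j$, $j\geq 0$, and the distributional quantities $a_j := |\{u \geq k_j\}\cap B_R|_\mu$, $\alpha_j := a_j/|B_R|_\mu$. Since $k_{j-1}\geq k_0 = M-\varepsilon$ for every $j\geq 1$, the hypothesis \eqref{VD008} propagates to $|\{u\leq k_{j-1}\}\cap B_R|_\mu \geq \gamma|B_R|_\mu$ for all $j\geq 1$. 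Applying \eqref{pro001} with $k=k_{j-1}$, $l=k_j$ (so $l-k=\varepsilon/2^j$) then gives
\begin{align*}
(\varepsilon/2^j)^q\, a_j^q\, \gamma\, |B_R|_\mu \leq C R^{q(n+\theta_1+\theta_2+1)}\int_{\{k_{j-1}<u<k_j\}\cap B_R}|\nabla u|^q\, d\mu.
\end{align*}

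Next I would bound the right-hand side by H\"older's inequality with conjugate exponents $p/q$ and $p/(p-q)$,
\begin{align*}
\int_{\{k_{j-1}<u<k_j\}\cap B_R}|\nabla u|^q\, d\mu \leq \Big(\int_{B_R}|\nabla(u-k_{j-1})^+|^p\, d\mu\Big)^{q/p}(a_{j-1}-a_j)^{(p-q)/p},
\end{align*}
and control the first factor by Caccioppoli's inequality applied to $v=(u-k_{j-1})^+$ with a cutoff $\eta\equiv 1$ on $B_R$, $\mathrm{supp}\,\eta\subset B_{2R}$, $|\nabla\eta|\leq C/R$: since $0\leq (u-k_{j-1})^+\leq \varepsilon/2^{j-1}$ on $B_{2R}$, Lemma \ref{lem003ZZW} combined with the doubling estimate $|B_{2R}|_\mu\leq C|B_R|_\mu\sim R^{n+\theta_1+\theta_2}$ (Theorem \ref{Lem009}, Lemma \ref{MWQAZ090}) yields
\begin{align*}
\int_{B_R}|\nabla(u-k_{j-1})^+|^p\, d\mu \leq C R^{-p}(\varepsilon/2^{j-1})^p\, R^{n+\theta_1+\theta_2}.
\end{align*}
Substituting back into the isoperimetric bound, every factor of $R$ and $\varepsilon$ cancels by scale invariance, leaving the dimensionless recursion
\begin{align*}
\alpha_j^q\, \gamma \leq C\, (\alpha_{j-1}-\alpha_j)^{(p-q)/p},
\quad\text{equivalently}\quad
\alpha_j^{pq/(p-q)}\, \gamma^{p/(p-q)} \leq C\,(\alpha_{j-1}-\alpha_j),
\end{align*}
with $C=C(n,p,q,\theta_1,\theta_2,\lambda)$ independent of $j,\varepsilon,R,\gamma$.

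Summing this from $j=1$ to $N$, using the monotonicity $\alpha_N\leq\alpha_j$ for $j\leq N$ and $\alpha_0-\alpha_N\leq 1$, one obtains $N\,\alpha_N^{pq/(p-q)}\gamma^{p/(p-q)} \leq C$, whence $\alpha_N \leq C\gamma^{-1/q}N^{-(p-q)/(pq)}$; this is exactly \eqref{DEC001ZZW}. Part (b) is established by the symmetric scheme: set $k_j := m+\varepsilon/2^j$ (now decreasing in $j$), $a_j := |\{u\leq k_j\}\cap B_R|_\mu$, use the companion isoperimetric inequality \eqref{pro002} with $k=k_j$, $l=k_{j-1}$, and apply Caccioppoli to $v=(u-k_{j-1})^-\leq \varepsilon/2^{j-1}$.

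The main obstacle is purely computational bookkeeping: the three ingredients each produce several powers of $R$, $|B_R|_\mu$, and $\varepsilon/2^j$, and one must verify that H\"older's splitting with exponent $p/(p-q)$ is exactly what makes all dimensional factors cancel, leaving a clean recursion in the normalized $\alpha_j$ only. Once this coupled scaling is checked (and it works simultaneously for every admissible $(\theta_1,\theta_2)$ because both the doubling constant from Theorem \ref{Lem009} and the Poincar\'e constant from Corollary \ref{QWZM090} are scale invariant), the summation and monotonicity steps are routine and yield the sharp $j^{-(p-q)/(pq)}$ decay together with the $\gamma^{-1/q}$ factor.
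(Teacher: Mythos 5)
Your proposal is correct and follows essentially the same route as the paper's proof: the isoperimetric inequality \eqref{pro001}--\eqref{pro002} at consecutive dyadic levels, H\"older interpolation with exponents $p/q$ and $p/(p-q)$, the Caccioppoli bound of Lemma \ref{lem003ZZW} with a cutoff on $B_{2R}$, and summation of the resulting telescoping recursion. The only cosmetic difference is that you normalize by $|B_R|_\mu$ to get a dimensionless recursion in $\alpha_j$, whereas the paper carries the powers of $R$ explicitly; the scaling check you perform confirms these are equivalent.
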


\begin{proof}
\noindent{\bf Step 1.}
For $i\geq0$, let $k_{i}=M-\frac{\varepsilon}{2^{i}}$ and $A(k_{i},R)=B_{R}\cap\{u>k_{i}\}.$ From \eqref{pro001}, we know that for $q>1,$
\begin{align}\label{WMZ001ZZW}
&(k_{i+1}-k_{i})^{q}|A(k_{i+1},R)|_{\mu}^{q}|B_{R}\setminus A(k_{i},R)|_{\mu}\notag\\
&\leq CR^{q(n+\theta_{1}+\theta_{2}+1)}\int_{A(k_{i},R)\setminus A(k_{i+1},R)}|\nabla u|^{q}wdx.
\end{align}
Using \eqref{VD008}, we have
\begin{align*}
|B_{R}\setminus A(k_{i},R)|_{\mu}\geq\gamma|B_{R}|_{\mu}=C(n,\theta_{1},\theta_{2})\gamma R^{n+\theta_{1}+\theta_{2}}.
\end{align*}
This, together with \eqref{WMZ001ZZW}, shows that
\begin{align*}
&|A(k_{i+1},R)|_{\mu}\leq\frac{C2^{i+1}}{\varepsilon\sqrt[q]{\gamma}}R^{\frac{(n+\theta_{1}+\theta_{2})(q-1)}{q}+1}\left(\int_{A(k_{i},R)\setminus A(k_{i+1},R)}|\nabla u|^{q}wdx\right)^{\frac{1}{q}}.
\end{align*}
Since $1<q<p<n+\theta_{1}+\theta_{2}$, we then have from H\"{o}lder's inequality that
\begin{align*}
&\int_{A(k_{i},R)\setminus A(k_{i+1},R)}|\nabla u|^{q}wdx\notag\\
&\leq\left(\int_{A(k_{i},R)\setminus A(k_{i+1},R)}|\nabla u|^{p}wdx\right)^{\frac{q}{p}}\left(\int_{A(k_{i},R)\setminus A(k_{i+1},R)}wdx\right)^{\frac{p-q}{p}}\notag\\
&\leq\left(\int_{B_{R}}|\nabla(u-k_{i})^{+}|^{p}wdx\right)^{\frac{q}{p}}\left(\int_{A(k_{i},R)\setminus A(k_{i+1},R)}wdx\right)^{\frac{p-q}{p}}.
\end{align*}
Choose a cut-off function $\eta\in C_{0}^{\infty}(B_{2R})$ satisfying that
\begin{align}\label{ETA003ZZW}
\eta=1\;\mathrm{in}\;B_{R},\quad0\leq\eta\leq1,\;|\nabla\eta|\leq\frac{C(n)}{R}\;\,\mathrm{in}\;B_{2R}.
\end{align}
It then follows from Lemma \ref{lem003ZZW} that
\begin{align*}
&\left(\int_{B_{R}}|\nabla (u-k_{i})^{+}|^{p}wdx\right)^{\frac{1}{p}}\leq C\left(\int_{B_{2R}}|(u-k_{i})^{+}|^{p}|\nabla\eta|^{p}wdx\right)^{\frac{1}{p}}\leq\frac{C\varepsilon}{2^{i}}R^{\frac{n+\theta_{1}+\theta_{2}-p}{p}}.
\end{align*}
A combination of these above facts shows that
\begin{align*}
|A(k_{i+1},R)|_{\mu}\leq&\frac{C}{\sqrt[q]{\gamma}}R^{(n+\theta_{1}+\theta_{2})(1-\frac{p-q}{pq})}|A(k_{i},R)\setminus A(k_{i+1},R)|_{\mu}^{\frac{p-q}{pq}}.
\end{align*}
This leads to that for $j\geq1$,
\begin{align*}
j|A(k_{j},R)|_{\nu}^{\frac{pq}{p-q}}\leq&\sum^{j-1}_{i=0}|A(k_{i+1},R)|_{\mu}^{\frac{pq}{p-q}}\leq\frac{C}{\gamma^{\frac{p}{p-q}}}R^{(n+\theta_{1}+\theta_{2})(\frac{pq}{p-q}-1)}|B_{R}|_{\mu}\notag\\
\leq&\frac{C}{\gamma^{\frac{p}{p-q}}}|B_{R}|_{\mu}^{\frac{pq}{p-q}}.
\end{align*}
Then \eqref{DEC001ZZW} is proved.

\noindent{\bf Step 2.}
For $i\geq0$, denote $\tilde{k}_{i}=m+\frac{\varepsilon}{2^{i}}$ and $\tilde{A}(k_{i},R)=B_{R}\cap\{u<k_{i}\}.$ In light of \eqref{pro002}, we see that for $q>1,$
\begin{align*}
&(\tilde{k}_{i}-\tilde{k}_{i+1})^{q}|\tilde{A}(\tilde{k}_{i+1},R)|_{\mu}^{q}|B_{R}\setminus \tilde{A}(\tilde{k}_{i},R)|_{\mu}\notag\\
&\leq CR^{q(n+\theta_{1}+\theta_{2}+1)}\int_{\tilde{A}(\tilde{k}_{i},R)\setminus \tilde{A}(\tilde{k}_{i+1},R)}|\nabla u|^{q}wdx.
\end{align*}
From \eqref{VDC006}, we have
\begin{align*}
|B_{R}\setminus \tilde{A}(\tilde{k}_{i},R)|_{\mu}\geq\gamma|B_{R}|_{\mu}=C(n,\theta_{1},\theta_{2})\gamma R^{n+\theta_{1}+\theta_{2}}.
\end{align*}
Hence we obtain
\begin{align*}
&|\tilde{A}(\tilde{k}_{i+1},R)|_{\mu}\leq\frac{C2^{i+1}}{\varepsilon\sqrt[q]{\gamma}}R^{\frac{(n+\theta_{1}+\theta_{2})(q-1)}{q}+1}\left(\int_{\tilde{A}(\tilde{k}_{i},R)\setminus \tilde{A}(\tilde{k}_{i+1},R)}|\nabla u|^{q}wdx\right)^{\frac{1}{q}}.
\end{align*}
Analogously as before, it follows from H\"{o}lder's inequality and Lemma \ref{lem003ZZW} that for $1<q<p<n+\theta_{1}+\theta_{2}$
\begin{align*}
&\left(\int_{\tilde{A}(\tilde{k}_{i},R)\setminus \tilde{A}(\tilde{k}_{i+1},R)}|\nabla u|^{q}wdx\right)^{\frac{1}{q}}\notag\\
&\leq\left(\int_{\tilde{A}(\tilde{k}_{i},R)\setminus \tilde{A}(\tilde{k}_{i+1},R)}|\nabla u|^{p}wdx\right)^{\frac{1}{p}}\left(\int_{\tilde{A}(\tilde{k}_{i},R)\setminus \tilde{A}(\tilde{k}_{i+1},R)}wdx\right)^{\frac{p-q}{pq}}\notag\\
&\leq\left(\int_{B_{R}}|\nabla(u-\tilde{k}_{i})^{-}|^{p}wdx\right)^{\frac{1}{p}}\left(\int_{\tilde{A}(\tilde{k}_{i},R)\setminus \tilde{A}(\tilde{k}_{i+1},R)}wdx\right)^{\frac{p-q}{pq}}\notag\\
&\leq C\left(\int_{B_{2R}}|(u-\tilde{k}_{i})^{-}|^{p}|\nabla\eta|^{p}wdx\right)^{\frac{1}{p}}|\tilde{A}(\tilde{k}_{i},R)\setminus \tilde{A}(\tilde{k}_{i+1},R)|_{\mu}^{\frac{p-q}{pq}}\notag\\
&\leq\frac{C\varepsilon}{2^{i}}R^{\frac{n+\theta_{1}+\theta_{2}-p}{p}}|\tilde{A}(\tilde{k}_{i},R)\setminus \tilde{A}(\tilde{k}_{i+1},R)|_{\mu}^{\frac{p-q}{pq}},
\end{align*}
where $\eta$ is given by \eqref{ETA003ZZW}. Then we obtain
\begin{align*}
|\tilde{A}(\tilde{k}_{i+1},R)|_{\mu}\leq&\frac{C}{\sqrt[q]{\gamma}}R^{(n+\theta_{1}+\theta_{2})(1-\frac{p-q}{pq})}|\tilde{A}(\tilde{k}_{i},R)\setminus \tilde{A}(\tilde{k}_{i+1},R)|_{\mu}^{\frac{p-q}{pq}},
\end{align*}
and thus,
\begin{align*}
j|\tilde{A}(\tilde{k}_{j},R)|_{\nu}^{\frac{pq}{p-q}}\leq&\sum^{j-1}_{i=0}|\tilde{A}(\tilde{k}_{i+1},R)|_{\mu}^{\frac{pq}{p-q}}\leq\frac{C}{\gamma^{\frac{p}{p-q}}}R^{(n+\theta_{1}+\theta_{2})(\frac{pq}{p-q}-1)}|B_{R}|_{\mu}\notag\\
\leq&\frac{C}{\gamma^{\frac{p}{p-q}}}|B_{R}|_{\mu}^{\frac{pq}{p-q}},\quad \text{for }j\geq1.
\end{align*}
The proof is complete.

\end{proof}

A combination of Lemmas \ref{LEM0035ZZW} and \ref{lem005ZZW} yields the following improvement on oscillation of $u$ in a small domain.
\begin{corollary}\label{AMZW01ZZW}
Assume that $n\geq2$, $(\theta_{1},\theta_{2})\in[(\mathcal{A}\cup\mathcal{B})\cap(C_{q}\cup\mathcal{D}_{q})\cap\mathcal{F}]\cup\{\theta_{1}=0,\,\theta_{2}>-(n-1)\}$, $1<q<p<n+\theta_{1}+\theta_{2}$, $0<\gamma<1$, $0<R<\frac{1}{2}$, and $0\leq m\leq\inf\limits_{B_{2R}}u\leq\sup\limits_{B_{2R}}u\leq M$. Then there exists a large constant $k_{0}>1$ depending only on $n,p,q,\theta_{1},\theta_{2},\lambda,\gamma$ such that for any $\varepsilon>0$,

$(i)$ if
\begin{align*}
\frac{|\{x\in B_{R}: u>M-\varepsilon\}|_{\mu}}{|B_{R}|_{\mu}}\leq1-\gamma,
\end{align*}
then
\begin{align*}
\sup\limits_{B_{R/2}}u\leq M-\frac{\varepsilon}{2^{k_{0}}};
\end{align*}

$(ii)$ if
\begin{align*}
\frac{|\{x\in B_{R}: u<m+\varepsilon\}|_{\mu}}{|B_{R}|_{\mu}}\leq1-\gamma,
\end{align*}
then
\begin{align*}
\inf\limits_{B_{R/2}}u\geq m+\frac{\varepsilon}{2^{k_{0}}}.
\end{align*}

\end{corollary}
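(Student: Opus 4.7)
The plan is to chain Lemma \ref{lem005ZZW} and Lemma \ref{LEM0035ZZW} in the natural way: use the measure-decay estimate of Lemma \ref{lem005ZZW} to drive the super-level-set density below the smallness threshold $\tau_{0}$ of Lemma \ref{LEM0035ZZW}, and then invoke Lemma \ref{LEM0035ZZW} to upgrade this density smallness to a pointwise oscillation gain in $B_{R/2}$. I treat case $(i)$; case $(ii)$ will follow by the same argument using part (b) of each lemma.

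First, under the hypothesis of Corollary \ref{AMZW01ZZW}$(i)$, the chain of inclusions $B_{R}\subset B_{2R}$ ensures $m\leq\inf_{B_{R}}u\leq\sup_{B_{R}}u\leq M$, so both Lemma \ref{lem005ZZW} and Lemma \ref{LEM0035ZZW} are applicable on $B_{R}$. From Lemma \ref{lem005ZZW}(a), for every integer $j\geq1$,
\begin{align*}
\frac{|\{x\in B_{R}:u>M-\varepsilon/2^{j}\}|_{\mu}}{|B_{R}|_{\mu}}\leq\frac{C_{1}}{\sqrt[q]{\gamma}\,j^{(p-q)/(pq)}},
\end{align*}
with $C_{1}=C_{1}(n,p,q,\theta_{1},\theta_{2},\lambda)$.

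Second, since $(p-q)/(pq)>0$, I can select an integer $k_{0}=k_{0}(n,p,q,\theta_{1},\theta_{2},\lambda,\gamma)\geq2$ so large that the right-hand side at $j=k_{0}-1$ is strictly smaller than the threshold $\tau_{0}=\tau_{0}(n,p,\theta_{1},\theta_{2},\lambda)$ supplied by Lemma \ref{LEM0035ZZW}; explicitly, it suffices to require
\begin{align*}
\frac{C_{1}}{\sqrt[q]{\gamma}\,(k_{0}-1)^{(p-q)/(pq)}}<\tau_{0}.
\end{align*}
This only absorbs the dependence on $\gamma$ into $k_{0}$, as required.

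Third, with this choice, the level $\tilde{\varepsilon}:=\varepsilon/2^{k_{0}-1}$ satisfies
\begin{align*}
|\{x\in B_{R}:u>M-\tilde{\varepsilon}\}|_{\mu}\leq\tau\,|B_{R}|_{\mu},\qquad\text{with }\tau<\tau_{0},
\end{align*}
so Lemma \ref{LEM0035ZZW}(a) applies with $\varepsilon$ replaced by $\tilde{\varepsilon}$ and yields
\begin{align*}
\sup_{B_{R/2}}u\leq M-\frac{\tilde{\varepsilon}}{2}=M-\frac{\varepsilon}{2^{k_{0}}},
\end{align*}
which is the desired conclusion. Part $(ii)$ is obtained by running the same two-step argument with the pairs Lemma \ref{lem005ZZW}(b) and Lemma \ref{LEM0035ZZW}(b) in place of the (a)-versions, working with $\tilde{k}_{j}=m+\varepsilon/2^{j}$ instead of $k_{j}=M-\varepsilon/2^{j}$. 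There is no real obstacle here: the entire content is bookkeeping, the only subtle point being that $k_{0}$ must be chosen \emph{after} $\gamma$ is fixed (so that $k_{0}$ is allowed to depend on $\gamma$), while $\tau_{0}$ is a universal constant free of $\gamma$ — this is exactly why one cannot simply feed the density estimate \eqref{VD008} itself into Lemma \ref{LEM0035ZZW}, and why the decay bound \eqref{DEC001ZZW} is needed as the bridge.
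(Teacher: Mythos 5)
Your argument is correct and is precisely the route the paper intends: its proof of Corollary \ref{AMZW01ZZW} is the one-line instruction to combine Lemmas \ref{LEM0035ZZW} and \ref{lem005ZZW}, and you have supplied exactly the missing bookkeeping — using the decay \eqref{DEC001ZZW} to push the level-set density below the $\gamma$-independent threshold $\tau_{0}$ and then rescaling $\varepsilon$ to $\varepsilon/2^{k_{0}-1}$ before invoking Lemma \ref{LEM0035ZZW}. The only detail the paper adds that you omit is the remark that in the single-weight case $\theta_{1}=0$, $\theta_{2}>-(n-1)$ one may fix $q=\frac{p+1}{2}$ in Lemma \ref{lem005ZZW}, which does not affect the validity of your argument.
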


\begin{proof}
Applying Lemmas \ref{LEM0035ZZW} and \ref{lem005ZZW}, we obtain that Corollary \ref{AMZW01ZZW} holds. In particular, in the case of $\theta_{1}=0,\,\theta_{2}>-(n-1)$, $1<p<n+\theta_{2}$, we fix $q=\frac{p+1}{2}$ in Lemma \ref{lem005ZZW}.

\end{proof}

We are now ready to prove Theorems \ref{THM001Z} and \ref{THM002}, respectively.
\begin{proof}[Proof of Theorem \ref{THM001Z}]
For $0<R\leq\frac{1}{2}$, denote
\begin{align*}
\overline{\mu}(R)=\sup_{B_{R}}u,\quad\underline{\mu}(R)=\inf_{B_{R}}u,\quad \omega(R)=\overline{\mu}(R)-\underline{\mu}(R).
\end{align*}
Note that one of the following two statements must hold:
either
\begin{align}\label{WAMQ001}
|\{x\in B_{R}:u>\overline{\mu}(R)-2^{-1}\omega(R)\}|_{\mu}\leq\frac{1}{2}|B_{R}|_{\mu},
\end{align}
or
\begin{align}\label{WAMQ002}
|\{x\in B_{R}:u<\underline{\mu}(R)+2^{-1}\omega(R)\}|_{\mu}\leq\frac{1}{2}|B_{R}|_{\mu}.
\end{align}
Using Corollary \ref{AMZW01ZZW} with $\gamma=\frac{1}{2}$, we derive that there is a large constant $k_{0}>1$ such that
\begin{align*}
\overline{\mu}(R/2)\leq\overline{\mu}(R)-\frac{\omega(R)}{2^{k_{0}+1}},\quad\text{when \eqref{WAMQ001} holds,}
\end{align*}
and
\begin{align*}
\underline{\mu}(R/2)\geq\underline{\mu}(R)+\frac{\omega(R)}{2^{k_{0}+1}},\quad\text{when \eqref{WAMQ002} holds.}
\end{align*}
In either case, we have
\begin{align*}
\omega(R/2)\leq\left(1-\frac{1}{2^{k_{0}+1}}\right)\omega(R)=\frac{1}{2^{\alpha}}\omega(R),\quad\mathrm{with}\;\alpha=-\frac{\ln\big(1-\frac{1}{2^{k_{0}+1}}\big)}{\ln2}.
\end{align*}
Observe that for each $0<R\leq\frac{1}{2}$, there exists an integer $l\geq1$ such that $2^{-(l+1)}<R\leq2^{-l}$. Since $\omega(R)$ increases with respect to $R$, we then have
\begin{align*}
\omega(R)\leq\omega(2^{-l})\leq 2^{-(l-1)\alpha}\omega(2^{-1})=4^{\alpha}2^{-(l+1)\alpha}\omega(2^{-1})\leq CR^{\alpha},
\end{align*}
where $C=C(n,p,q,\theta_{1},\theta_{2},\lambda,\overline{M}).$ The proof is complete.

\end{proof}

\begin{proof}[Proof of Theorem \ref{THM002}]
First, by applying the proof of Theorem \ref{THM001Z} with a slight modification, we also obtain that there exist a small constant $0<\alpha=\alpha(n,p,\theta_{2},\lambda)<1$ and a large constant $C=C(n,p,\theta_{2},\lambda,\overline{M})>0$ such that
\begin{align}\label{WMA092}
|u(x)-u(0)|\leq C|x|^{\alpha},\quad\text{for all}\;x\in B_{1/2}.
\end{align}

For $R\in(0,1/2)$, $y\in Q_{1/R}$, denote
\begin{align*}
u_{R}(y)=u(Ry),\quad A_{R}(y)=A(Ry).
\end{align*}
Hence, $u_{R}$ is the solution of
\begin{align*}
\mathrm{div}(A_{R}|y|^{\theta_{2}}\nabla u_{R})=0,\quad\mathrm{in}\;Q_{1/R}.
\end{align*}
After the change of variables, we see that this equation becomes degenerate elliptic equation in $B_{1/2}(\bar{y})$ for any $\bar{y}\in\partial B_{1}$. For any two given points $x,\tilde{x}\in B_{1/2},$ let $|\tilde{x}|\leq|x|$ without loss of generality. Denote $R=|x|$. By the interior H\"{o}lder estimate for degenerate elliptic equation, we derive that there exist two constants $0<\beta=\beta(n,p,\theta_{2},\lambda)<1$ and $0<C=C(n,p,\theta_{2},\lambda,\overline{M})$ such that for any $\bar{y}\in\partial B_{1}$,
\begin{align}\label{WAQA001ZZW}
|u_{R}(y)-u_{R}(\bar{y})|\leq C|y-\bar{y}|^{\beta},\quad\forall\;y\in B_{1/2}(\bar{y}).
\end{align}
Consequently, for $|x-\tilde{x}|\leq R^{2}$, we have from \eqref{WAQA001ZZW} that
\begin{align*}
|u(x)-u(\tilde{x})|=&\left|u_{R}(x/R)-u_{R}(\tilde{x}/R)\right|\leq C|(x-\tilde{x})/R|^{\beta}\leq C|x-\tilde{x}|^{\beta/2},
\end{align*}
while, for $|x-\tilde{x}|>R^{2}$, we deduce from \eqref{WMA092} that
\begin{align*}
|u(x)-u(\tilde{x})|\leq&|u(x)-u(0)|+|u(0)-u(\tilde{x})|\leq C\big(R^{\alpha}+|\tilde{x}|^{\alpha}\big)\leq CR^{\alpha}\leq C|x-\tilde{x}|^{\frac{\alpha}{2}}.
\end{align*}
Therefore, the proof Theorem \ref{THM002} is finished.

\end{proof}

\section{Regularity for solutions to nonlinear parabolic equations with anisotropic weights}\label{SEC00four}

Let $n\geq2$, $R>0$, and $-T\leq t_{1}<t_{2}\leq0$. For $u\in C(t_{1},t_{2};L^{2}(B_{R},w_{1}))\cap L^{2}(t_{1},t_{2};W_{0}^{1,2}(B_{R},w_{2}))$, denote
\begin{align*}
\|u\|_{V^{1}_{0}(B_{R}\times(t_{1},t_{2}))}=\sqrt{\sup\limits_{t\in(t_{1},t_{2})}\int_{B_{R}}|u|^{2}w_{1}dx+\int^{t_{2}}_{t_{1}}\int_{B_{R}}|\nabla u|^{2}w_{2}dxdt},
\end{align*}
where the anisotropic weights $w_{1}$ and $w_{2}$ are defined in \eqref{PROBLEM001}. The parabolic Sobolev inequality with anisotropic weights is now given as follows.
\begin{prop}\label{prop001}
For $n\geq2$, $R>0$, $\theta_{1}+\theta_{2}>-(n-2)$, and $-T\leq t_{1}<t_{2}\leq0$, let $u\in C(t_{1},t_{2};L^{2}(B_{R},w_{1}))\cap L^{2}(t_{1},t_{2};W_{0}^{1,2}(B_{R},w_{2}))$. Then
\begin{align*}
\|u\|_{L^{2\chi}(B_{R}\times(t_{1},t_{2}),w_{2})}\leq C\|u\|_{V^{1}_{0}(B_{R}\times(t_{1},t_{2}))},\quad\mathrm{with}\;\chi=\frac{n+\theta_{1}+\theta_{2}+2}{n+\theta_{1}+\theta_{2}},
\end{align*}
where $C=C(n,\theta_{1},\theta_{2}).$

\end{prop}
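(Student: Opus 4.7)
The plan is to prove this parabolic Sobolev embedding by the classical Ladyzhenskaya interpolation argument adapted to anisotropic weights: combine a pointwise-in-$t$ spatial weighted Sobolev inequality with a Hölder interpolation between $L^{2}(w_1)$ and a critical spatial Lebesgue space, then integrate in $t$ and pull out $\sup_{t}\|u(\cdot,t)\|_{L^2(B_R,w_1)}$.

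First, for almost every $t\in(t_1,t_2)$, since $u(\cdot,t)\in W_0^{1,2}(B_R,w_2)$, I would invoke the anisotropic Caffarelli--Kohn--Nirenberg inequality of Li--Yan \cite{LY2021} in the form
$$\left(\int_{B_R}|u(\cdot,t)|^{2\sigma}w_2\,dx\right)^{1/\sigma}\leq C\int_{B_R}|\nabla u(\cdot,t)|^{2}w_2\,dx,$$
where $\sigma$ is the spatial Sobolev exponent associated with the effective dimension of the weight. The hypothesis $\theta_{1}+\theta_{2}>-(n-2)$ will enter at this stage to guarantee that the exponent $\chi=(N_1+2)/N_1$ with $N_1:=n+\theta_1+\theta_2$ is of the correct Sobolev--scaling form.

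Second, I would interpolate at each fixed $t$ by Hölder's inequality. Writing $2\chi=2+4/N_1$, I split
$$\int_{B_R}|u|^{2\chi}w_2\,dx=\int_{B_R}|u|^{4/N_1}\cdot|u|^{2}w_2\,dx,$$
and apply Hölder with conjugate exponents $N_1/2$ and $N_1/(N_1-2)$. After regrouping the weight factors so that one side is of the form $\|u(\cdot,t)\|_{L^2(B_R,w_1)}^{4/N_1}$, the other side is controlled via the spatial inequality by $\|\nabla u(\cdot,t)\|_{L^2(B_R,w_2)}^{2}$. Integrating in $t\in(t_1,t_2)$, the first factor is absorbed by $\sup_{t}\|u(\cdot,t)\|_{L^2(B_R,w_1)}^{4/N_1}$ and the second by $\int_{t_1}^{t_2}\|\nabla u(\cdot,t)\|_{L^2(B_R,w_2)}^{2}\,dt$; summing these two pieces reproduces $\|u\|_{V^1_0}^2$, and taking the $(2\chi)$-th root yields the claim.

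The principal obstacle I anticipate is the weight mismatch inside the Hölder splitting: the spatial Sobolev inequality is most naturally stated for a single weight, while the $V^1_0$-norm carries $w_1$ in the $L^\infty_t L^2_x$ piece and $w_2$ in the gradient piece. Resolving this requires either dominating $w_1$ by a constant multiple of $w_2$ on the bounded ball $B_R$ (by exploiting the explicit power-type structure of the two anisotropic weights), or performing the Hölder interpolation with a carefully chosen hybrid weight and verifying that the resulting Caffarelli--Kohn--Nirenberg inequality still produces exactly the exponent $2\chi=2(N_1+2)/N_1$ appearing in the statement, rather than an analogous one governed by $\theta_3+\theta_4$. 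Once this compatibility is handled, the remaining computation is routine bookkeeping with Hölder exponents.
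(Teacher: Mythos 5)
Your proposal is correct and follows essentially the same route as the paper: a spatial anisotropic Caffarelli--Kohn--Nirenberg inequality combined with a Hölder interpolation with conjugate exponents $\frac{1}{2-\chi}$ and $\frac{1}{\chi-1}$, then integration in $t$ and Young's inequality. The ``principal obstacle'' you flag is resolved exactly by the second option you propose: the paper applies the Li--Yan inequality with the hybrid left-hand weight $|x'|^{\frac{\theta_{3}(n+\theta_{1}+\theta_{2})-2\theta_{1}}{n+\theta_{1}+\theta_{2}-2}}|x|^{\frac{\theta_{4}(n+\theta_{1}+\theta_{2})-2\theta_{2}}{n+\theta_{1}+\theta_{2}-2}}$, chosen precisely so that the Hölder splitting recombines into $w_{2}$ on the left and $w_{1}$ in the $L^{2}$ factor.
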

\begin{proof}
Applying the anisotropic version of the Caffarelli-Kohn-Nirenberg inequality in \cite{LY2021}, we obtain that for any $u\in W_{0}^{1,2}(B_{R},w_{2})$,
\begin{align*}
&\left(\int_{B_{R}}|u|^{\frac{2(n+\theta_{1}+\theta_{2})}{n+\theta_{1}+\theta_{2}-2}}|x'|^{\frac{\theta_{3}(n+\theta_{1}+\theta_{2})-2\theta_{1}}{n+\theta_{1}+\theta_{2}-2}}|x|^{\frac{\theta_{4}(n+\theta_{1}+\theta_{2})-2\theta_{2}}{n+\theta_{1}+\theta_{2}-2}}dx\right)^{\frac{n+\theta_{1}+\theta_{2}-2}{n+\theta_{1}+\theta_{2}}}\notag\\
&\leq C\int_{B_{R}}|\nabla u|^{2}|x'|^{\theta_{3}}|x|^{\theta_{4}}dx.
\end{align*}
This, in combination with the H\"{o}lder's inequality, leads to that
\begin{align}\label{ZW001}
&\int_{B_{R}}|u|^{2\chi}|x'|^{\theta_{3}}|x|^{\theta_{4}}dx\notag\\
&=\int_{B_{R}}|u|^{2}|x'|^{\theta_{3}-\theta_{1}(\chi-1)}|x'|^{\theta_{4}-\theta_{2}(\chi-1)}|u|^{2(\chi-1)}|x'|^{\theta_{1}(\chi-1)}|x|^{\theta_{2}(\chi-1)}dx\notag\\
&\leq\left(\int_{B_{R}}|u|^{\frac{2}{2-\chi}}|x'|^{\frac{\theta_{3}-\theta_{1}(\chi-1)}{2-\chi}}|x|^{\frac{\theta_{4}-\theta_{2}(\chi-1)}{2-\chi}}dx\right)^{2-\chi}\left(\int_{B_{R}}|u|^{2}|x'|^{\theta_{1}}|x|^{\theta_{2}}dx\right)^{\chi-1}\notag\\
&\leq C\int_{B_{R}}|\nabla u|^{2}|x'|^{\theta_{3}}|x|^{\theta_{4}}dx\left(\int_{B_{R}}|u|^{2}|x'|^{\theta_{1}}|x|^{\theta_{2}}dx\right)^{\chi-1}.
\end{align}
Then integrating \eqref{ZW001} from $t_{1}$ to $t_{2}$, it follows from Young's inequality that
\begin{align*}
\left(\int^{t_{2}}_{t_{1}}\int_{B_{R}}|u|^{2\chi}w_{2}\right)^{\frac{1}{\chi}}\leq& C\left(\sup\limits_{t\in(t_{1},t_{2})}\int_{B_{R}}|u|^{2}w_{1}dx\right)^{\frac{\chi-1}{\chi}}\left(\int_{B_{R}}|\nabla u|^{2}w_{2}dx\right)^{\frac{1}{\chi}}\notag\\
\leq&C\left(\sup\limits_{t\in(t_{1},t_{2})}\int_{B_{R}}|u|^{2}w_{1}dx+\int^{t_{2}}_{t_{1}}\int_{B_{R}}|\nabla u|^{2}w_{2}dxdt\right).
\end{align*}
The proof is complete.

\end{proof}

The Caccioppoli inequality for the truncated solution is given as follows.
\begin{lemma}\label{lem003}
Set $\overline{m}\leq k\leq\overline{M}$. Then for any $B_{R}(x_{0})\subset B_{1}$ and nonnegative $\eta\in C^{\infty}(B_{R}(x_{0})\times(-1,0))$ which vanishes on $\partial B_{R}(x_{0})\times(-1,0)$, we obtain that for $-1<t_{1}<t_{2}<0$,
\begin{align*}
&\sup\limits_{t\in(t_{1},t_{2})}\int_{B_{R}(x_{0})}(v\eta)^{2}w_{1}dx+\int_{B_{R}(x_{0})\times(t_{1},t_{2})}|\nabla(v\eta)|^{2}w_{2}dxdt\notag\\
&\leq\int_{B_{R}(x_{0})}(v^{2}+C_{0}v^{3})\eta^{2}w_{1}dx\Big|_{t_{1}}+C_{0}\int_{B_{R}(x_{0})\times(t_{1},t_{2})}(\eta|\partial_{t}\eta|w_{1}+|\nabla\eta|^{2}w_{2})v^{2}dxdt,
\end{align*}
and
\begin{align*}
&\sup\limits_{t\in(t_{1},t_{2})}\int_{B_{R}(x_{0})}(\tilde{v}^{2}-C_{0}\tilde{v}^{3})\eta^{2}w_{1}dx+\int_{B_{R}(x_{0})}|\nabla(\tilde{v}\eta)|^{2}w_{2}dxdt\notag\\
&\leq\int_{B_{R}(x_{0})}(\tilde{v}\eta)^{2}w_{1}dx\Big|_{t_{1}}+C_{0}\int_{B_{R}(x_{0})\times(t_{1},t_{2})}(\eta|\partial_{t}\eta|w_{1}+|\nabla\eta|^{2}w_{2})\tilde{v}^{2}dxdt,
\end{align*}
where $C_{0}=C_{0}(n,p,\lambda,\overline{m},\overline{M})$, $v=(u-k)^{+}$, $\tilde{v}=(u-k)^{-}$, $u$ is the solution of problem \eqref{PROBLEM001}.

\end{lemma}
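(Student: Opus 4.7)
I would follow the standard energy/Caccioppoli argument for weak solutions of degenerate parabolic equations, adapted to the nonlinear-in-time structure $w_{1}\partial_{t}u^{p}$. For the first inequality I test the weak formulation of \eqref{PROBLEM001} against $\varphi=v\eta^{2}$, where $v=(u-k)^{+}$; for the second, against $\varphi=-\tilde v\eta^{2}$, where $\tilde v=(u-k)^{-}$. Since $\partial_{t}u^{p}$ only exists in a weak sense, the computation should be carried out rigorously through Steklov time-averaging of $u^{p}$ followed by the limit, exploiting the hypothesis that $\eta$ vanishes on the lateral boundary so no boundary term from the spatial integration by parts appears.

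\textbf{Time term.} The key pointwise identity (on $\{u>k\}$) is
\begin{align*}
\partial_{t}u^{p}\cdot v=p\,u^{p-1}v\,\partial_{t}v=\partial_{t}\Psi(u),\qquad \Psi(u):=\int_{k}^{u}p s^{p-1}(s-k)\,ds,
\end{align*}
with $\Psi\equiv 0$ on $\{u\leq k\}$, and an analogous $\tilde\Psi(u):=\int_{u}^{k}p s^{p-1}(k-s)\,ds$ for the $(u-k)^{-}$ case. Integration of $w_{1}\eta^{2}\partial_{t}\Psi(u)$ over $B_{R}(x_{0})\times(t_{1},t_{2})$ and transferring $\partial_{t}$ off $\Psi$ produces boundary-in-time values plus an error of type $C\int\eta|\partial_{t}\eta|w_{1}\Psi(u)$. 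Because $\overline m\leq u\leq\overline M$ and $p\geq 1$, Taylor-expanding $ps^{p-1}$ around $s=k$ yields $\Psi(u)=\tfrac{pk^{p-1}}{2}v^{2}+R(v)$ with $0\leq R(v)\leq Cv^{3}$, and symmetrically $\tilde\Psi(u)=\tfrac{pk^{p-1}}{2}\tilde v^{2}+\tilde R(\tilde v)$ with $-C\tilde v^{3}\leq\tilde R(\tilde v)\leq 0$. Using the upper Taylor bound at the initial time $t_{1}$ and the lower Taylor bound at the supremum time (with the multiplicative constants $p k^{p-1}/2\in[p\overline m^{p-1}/2,\,p\overline M^{p-1}/2]$ absorbed into $C_{0}$) produces the asymmetric forms $v^{2}+C_{0}v^{3}$ and $\tilde v^{2}-C_{0}\tilde v^{3}$ appearing in the two statements.

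\textbf{Spatial term.} On $\{u>k\}$ one has $\nabla u=\nabla v$, so expanding $\nabla(v\eta^{2})=\eta^{2}\nabla v+2v\eta\nabla\eta$ and using uniform ellipticity \eqref{WQ090} together with Young's inequality retains $\tfrac{1}{2\lambda}\int|\nabla v|^{2}\eta^{2}w_{2}$ on the coercive side against $C\int v^{2}|\nabla\eta|^{2}w_{2}$ in the error; the $(u-k)^{-}$ case is parallel. The algebraic inequality $|\nabla(v\eta)|^{2}\leq 2|\nabla v|^{2}\eta^{2}+2v^{2}|\nabla\eta|^{2}$ then rewrites the coercive part in the form $\int|\nabla(v\eta)|^{2}w_{2}$ required by the LHS, at the cost of an extra $\int v^{2}|\nabla\eta|^{2}w_{2}$ that is already present in the stated RHS. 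Taking the supremum over $t_{2}\in(t_{1},0)$ of the time-integrated identity supplies the $L^{\infty}_{t}L^{2}_{x}$ piece of the LHS.

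\textbf{Main obstacle.} The one delicate step is the distributional identity $\int w_{1}\partial_{t}u^{p}\cdot v\eta^{2}dx=\tfrac{d}{dt}\int w_{1}\Psi(u)\eta^{2}dx-\int w_{1}\Psi(u)\partial_{t}(\eta^{2})dx$, which is obtained by Steklov averaging in $t$. Here the bound $\overline m>0$ is essential, since it makes $s\mapsto s^{p}$ Lipschitz on the range of $u$, allowing one to commute the regularization with the nonlinearity and to pass to the limit. Beyond local integrability, the anisotropic structure of $w_{1}$ and $w_{2}$ plays no role in this lemma: the proof is identical to its unweighted counterpart once the weights are carried through every integration.
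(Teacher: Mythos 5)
Your proposal is correct and follows essentially the same route as the paper: test with $v\eta^{2}$ and $-\tilde v\eta^{2}$, replace $\partial_{t}u^{p}\cdot v$ by the time derivative of a primitive (your $\Psi$ equals $p$ times the paper's $\mathcal{H}$), use the one-sided Taylor bounds $0\leq\Psi-\tfrac{p}{2}k^{p-1}v^{2}\leq Cv^{3}$ and its mirror for $\tilde v$ to produce the asymmetric $v^{2}+C_{0}v^{3}$ and $\tilde v^{2}-C_{0}\tilde v^{3}$ terms, and handle the elliptic part with ellipticity, Young's inequality, and $|\nabla(v\eta)|^{2}\leq2\eta^{2}|\nabla v|^{2}+2v^{2}|\nabla\eta|^{2}$. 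The Steklov-averaging remark is a standard rigorization of the step the paper dispatches "by denseness."
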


\begin{proof}
Choose test function $\varphi=v\eta^{2}$. By denseness, we obtain that for $t_{1}\leq s\leq t_{2}$,
\begin{align*}
\int^{s}_{t_{1}}\int_{B_{R}(x_{0})}w_{1}\partial_{t}u^{p}v\eta^{2}dxdt+\int^{s}_{t_{1}}\int_{B_{R}(x_{0})}Aw_{2}\nabla u\nabla(v\eta^{2})\,dxdt=0.
\end{align*}
On one hand,
\begin{align*}
\int^{s}_{t_{1}}\int_{B_{R}(x_{0})}w_{1}\partial_{t}u^{p}v\eta^{2}dxdt=&p \int^{s}_{t_{1}}\int_{B_{R}(x_{0})}w_{1}v\eta^{2}u^{p-1}\partial_{t}v\,dxdt\notag\\
=&p \int^{s}_{t_{1}}\int_{B_{R}(x_{0})}w_{1}\eta^{2}[(v+k)^{p}-k(v+k)^{p-1}]\partial_{t}v\,dxdt\notag\\
=&p \int^{s}_{t_{1}}\int_{B_{R}(x_{0})}w_{1}\eta^{2}\partial_{t}\mathcal{H},
\end{align*}
where
\begin{align*}
\mathcal{H}:=\frac{(v+k)^{p+1}}{p+1}-\frac{k(v+k)^{p}}{p}+\frac{k^{p+1}}{p(p+1)}.
\end{align*}
Remark that the last term $\frac{k^{p+1}}{p(p+1)}$ in $\mathcal{H}$ is added to keep its non-negative. In fact, by Taylor expansion, we obtain
\begin{align*}
\frac{(v+k)^{p+1}}{p+1}=&\frac{k^{p+1}}{p+1}\left(1+\frac{v}{k}\right)^{p+1}\notag\\
=&\frac{k^{p+1}}{p+1}\left(1+(p+1)\frac{v}{k}+\frac{p(p+1)}{2}\frac{v^{2}}{k^{2}}+\frac{(p-1)p(p+1)}{6}\frac{v^{3}}{k^{3}}+O\big(\frac{v^{4}}{k^{4}}\big)\right),
\end{align*}
and
\begin{align*}
\frac{k(v+k)^{p}}{p}=&\frac{k^{p+1}}{p}\left(1+\frac{v}{k}\right)^{p}\notag\\
=&\frac{k^{p+1}}{p}\left(1+p\frac{v}{k}+\frac{p(p-1)}{2}\frac{v^{2}}{k^{2}}+\frac{p(p-1)(p-2)}{6}\frac{v^{3}}{k^{3}}+O\big(\frac{v^{4}}{k^{4}}\big)\right).
\end{align*}
A consequence of these two relations shows that
\begin{align*}
\frac{(v+k)^{p+1}}{p+1}-\frac{k(v+k)^{p}}{p}=-\frac{k^{p+1}}{p(p+1)}+\frac{1}{2}k^{p-1}v^{2}+\frac{p-1}{3}k^{p-2}v^{3}+O\big(\frac{v^{4}}{k^{4}}\big),
\end{align*}
which yields that
\begin{align}\label{ZWZ005}
0\leq\mathcal{H}-\frac{1}{2}k^{p-1}v^{2}\leq C(p,\overline{m},\overline{M})v^{3}.
\end{align}
In light of \eqref{ZWZ005}, it follows from integration by parts that
\begin{align*}
&\int^{s}_{t_{1}}\int_{B_{R}(x_{0})}w_{1}\partial_{t}u^{p}v\eta^{2}dxdt\notag\\
&\geq\frac{p}{2}k^{p-1}\int_{B_{R}(x_{0})}w_{1}\eta^{2}(x,s)v^{2}(x,s)\,dx-\frac{p}{2}k^{p-1}\int_{B_{R}(x_{0})}w_{1}\eta^{2}(x,t_{1})v^{2}(x,t_{1})\,dx\notag\\
&\quad-C\int_{B_{R}(x_{0})}w_{1}\eta^{2}(x,t_{1})v^{3}(x,t_{1})\,dx-C\int^{s}_{t_{1}}\int_{B_{R}(x_{0})}w_{1}\eta|\partial_{t}\eta|v^{2}dxdt.
\end{align*}

On the other hand, utilizing Young's inequality, we have
\begin{align*}
&\int_{B_{R}(x_{0})\times(t_{1},s)}Aw_{2}\nabla u\nabla(v\eta^{2})\,dxdt\notag\\
&=\int_{B_{R}(x_{0})\times(t_{1},s)}Aw_{2}\nabla v(\eta^{2}\nabla v+2v\eta\nabla\eta)\,dxdt\notag\\
&\geq\frac{1}{\lambda}\int_{B_{R}(x_{0})\times(t_{1},s)}|\eta\nabla v|^{2}w_{2}dxdt-C\int_{B_{R}(x_{0})\times(t_{1},s)}v\eta|\nabla\eta||\nabla v|w_{2}dxdt\notag\\
&\geq\frac{1}{2\lambda}\int_{B_{R}(x_{0})\times(t_{1},s)}|\eta\nabla v|^{2}w_{2}dxdt-C\int_{B_{R}(x_{0})\times(t_{1},s)}v^{2}|\nabla\eta|^{2}w_{2}dxdt\notag\\
&\geq\frac{1}{4\lambda}\int_{B_{R}(x_{0})\times(t_{1},s)}|\nabla(\eta v)|^{2}w_{2}dxdt-C\int_{B_{R}(x_{0})\times(t_{1},s)}v^{2}|\nabla\eta|^{2}w_{2}dxdt,
\end{align*}
where in the last inequality we used the following elementary inequality:
\begin{align*}
|a-b|^{2}\geq\frac{1}{2}|a|^{2}-|b|^{2},\quad\text{for any $a,b\in\mathbb{R}^{n}$.}
\end{align*}
Therefore, the first inequality in Lemma \ref{lem003} holds.

The proof of the second inequality in Lemma \ref{lem003} is analogous by picking test function $\varphi=-\tilde{v}\eta^{2}$. Then we obtain
\begin{align*}
-\int^{s}_{t_{1}}\int_{B_{R}(x_{0})}w_{1}\partial_{t}u^{p}\tilde{v}\eta^{2}=&p \int^{s}_{t_{1}}\int_{B_{R}(x_{0})}w_{1}\tilde{v}\eta^{2}u^{p-1}\partial_{t}\tilde{v}\notag\\
=&p \int^{s}_{t_{1}}\int_{B_{R}(x_{0})}w_{1}\eta^{2}[-(k-\tilde{v})^{p}+k(k-\tilde{v})^{p-1}]\partial_{t}\tilde{v}\notag\\
=&p \int^{s}_{t_{1}}\int_{B_{R}(x_{0})}w_{1}\eta^{2}\partial_{t}\widetilde{\mathcal{H}},
\end{align*}
where
\begin{align*}
\widetilde{\mathcal{H}}=\frac{(k-\tilde{v})^{p+1}}{p+1}-\frac{k(k-\tilde{v})^{p}}{p}+\frac{k^{p+1}}{p(p+1)}.
\end{align*}
Similarly as before, it follows from Taylor expansion that
\begin{align*}
-C(p,\overline{m},\overline{M})\tilde{v}^{3}\leq\widetilde{\mathcal{H}}-\frac{1}{2}k^{p-1}\tilde{v}^{2}\leq0,
\end{align*}
which reads that
\begin{align*}
&\int^{s}_{t_{1}}\int_{B_{R}(x_{0})}w_{1}\partial_{t}u^{p}v\eta^{2}dxdt\notag\\
&\geq\frac{p}{2}k^{p-1}\int_{B_{R}(x_{0})}w_{1}\eta^{2}(x,s)\tilde{v}^{2}(x,s)\,dx-C\int_{B_{R}(x_{0})}w_{1}\eta^{2}(x,s)\tilde{v}^{3}(x,s)\,dx\notag\\
&\quad-\frac{p}{2}k^{q-1}\int_{B_{R}(x_{0})}w_{1}\eta^{2}(x,t_{1})\tilde{v}^{2}(x,t_{1})\,dx-C\int^{s}_{t_{1}}\int_{B_{R}(x_{0})}w_{1}\eta|\partial_{t}\eta|\tilde{v}^{2}dxdt.
\end{align*}
By the same argument as before, we have
\begin{align*}
&-\int_{B_{R}(x_{0})\times(t_{1},s)}Aw_{2}\nabla u\nabla(\tilde{v}\eta^{2})\,dxdt=\int_{B_{R}(x_{0})\times(t_{1},s)}Aw_{2}\nabla\tilde{v}\nabla(\tilde{v}\eta^{2})\,dxdt\notag\\
&\geq\frac{1}{4\lambda}\int_{B_{R}(x_{0})\times(t_{1},s)}|\nabla(\eta\tilde{v})|^{2}w_{2}dxdt-C\int_{B_{R}(x_{0})\times(t_{1},s)}\tilde{v}^{2}|\nabla\eta|^{2}w_{2}dxdt.
\end{align*}
The proof is complete.

\end{proof}

For $R>0$ and $(x_{0},t_{0})\in B_{1}\times[-1+R^{\theta_{1}+\theta_{2}},0]$, denote
\begin{align*}
Q_{R}(x_{0},t_{0}):=B_{R}(x_{0})\times(t_{0}-R^{\theta_{1}+\theta_{2}},t_{0}].
\end{align*}
For brevity, we use $Q_{R}$ to represent $Q_{R}(0,0)$ in the following. Introduce two Radon measures associated with the weights $w_{1}$ and $w_{2}$ as follows:
\begin{align*}
d\mu_{w_{i}}=w_{i}dx,\quad d\nu_{w_{i}}=w_{i}dxdt,\quad i=1,2,
\end{align*}
satisfying that for $E\subset B_{1}$ and $\widetilde{E}\subset Q_{1}$,
\begin{align*}
|E|_{\mu_{w_{i}}}=\int_{E}w_{i}dx,\quad |\widetilde{E}|_{\nu_{w_{i}}}=\int_{\widetilde{E}}w_{i}dxdt.
\end{align*}
Observe that by H\"{o}lder's inequality, we know that for $\widetilde{E}\subset Q_{R}$,
\begin{align}\label{QAZ002}
\frac{|\widetilde{E}|_{\nu_{w_{2}}}}{|Q_{R}|_{\nu_{w_{2}}}}\leq&\leq\frac{C|\widetilde{E}|_{\nu_{w_{1}}}^{\frac{\theta_{3}}{\theta_{1}}}R^{\frac{(n+\theta_{1}+\theta_{2})(\theta_{1}-\theta_{3})}{\theta_{1}}}}{R^{n+\theta_{1}+\theta_{2}+\theta_{3}+\theta_{4}}}\leq C\left(\frac{|\widetilde{E}|_{\nu_{w_{1}}}}{|Q_{R}|_{\nu_{w_{1}}}}\right)^{\frac{\theta_{3}}{\theta_{1}}},
\end{align}
where $C=C(n,\theta_{1},\theta_{2},\theta_{3},\theta_{4}).$ Here we used the assumed condition that $\theta_{1}/\theta_{3}=\theta_{2}/\theta_{4}$, $\theta_{3},\theta_{4}\neq0.$

Similar to Lemma \ref{LEM0035ZZW}, we improve the oscillation of the solution $u$ in a small region as follows.
\begin{lemma}\label{LEM0035}
Assume as in Theorem \ref{ZWTHM90} or Theorem \ref{THM060}. For $R\in(0,1)$ and $t_{0}\in[-1+R^{\theta_{1}+\theta_{2}},0]$, let $0<\overline{m}\leq m\leq\inf\limits_{Q_{R}(0,t_{0})}u\leq\sup\limits_{Q_{R}(0,t_{0})}u\leq M\leq\overline{M}$. Then

$(a)$ there exists a small constant $0<\tau_{0}=\tau_{0}(n,p,\theta_{1},\theta_{2},\theta_{3},\lambda,\overline{m},\overline{M})<1$ such that for any $\varepsilon>0$ and $0<\tau<\tau_{0}$, if
\begin{align}\label{ZWZ007}
|\{(x,t)\in Q_{R}(0,t_{0}): u(x,t)>M-\varepsilon\}|_{\nu_{w_{1}}}\leq\tau|Q_{R}(0,t_{0})|_{\nu_{w_{1}}},
\end{align}
then we have
\begin{align}\label{DZ001}
u(x,t)\leq M-\frac{\varepsilon}{2},\quad\text{for $(x,t)\in Q_{R/2}(0,t_{0})$;}
\end{align}

$(b)$ there exists two small constant $0<\varepsilon_{0}=\varepsilon_{0}(n,p,\lambda,\overline{m},\overline{M})<1$ and $0<\tau_{0}=\tau_{0}(n,p,\theta_{1},\theta_{2},\theta_{3},\lambda,\overline{m},\overline{M})<1$ such that for any $0<\varepsilon\leq\varepsilon_{0}$ and $0<\tau<\tau_{0}$, if
\begin{align*}
|\{(x,t)\in Q_{R}(0,t_{0}): u(x,t)<m+\varepsilon\}|_{\nu_{w_{1}}}\leq\tau|Q_{R}(0,t_{0})|_{\nu_{w_{1}}},
\end{align*}
then we have
\begin{align}\label{ZWZ009}
u(x,t)\geq m-\frac{\varepsilon}{2},\quad\text{for $(x,t)\in Q_{R/2}(0,t_{0})$.}
\end{align}

\end{lemma}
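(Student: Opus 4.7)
The plan is to run a De Giorgi iteration on shrinking parabolic cylinders, parallel to the proof of Lemma \ref{LEM0035ZZW} but now driven by the parabolic Sobolev embedding of Proposition \ref{prop001} and the two Caccioppoli-type inequalities of Lemma \ref{lem003}. Set $r_i = R/2 + R/2^{i+1}$ and $k_i = M - \varepsilon/2 - \varepsilon/2^{i+1}$ for $i \geq 0$, let $Q_i := B_{r_i} \times (t_0 - r_i^{\theta_1+\theta_2}, t_0]$, and pick smooth cutoffs $\eta_i$ supported in $Q_i$, equal to $1$ on $Q_{i+1}$, vanishing on the parabolic boundary of $Q_i$, with $|\nabla \eta_i| \lesssim 2^i/R$ and $|\partial_t \eta_i| \lesssim 2^{(\theta_1+\theta_2)i}/R^{\theta_1+\theta_2}$. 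The intrinsic time scaling $R^{\theta_1+\theta_2}$ is chosen precisely so that the two terms $\eta|\partial_t\eta|w_1$ and $|\nabla\eta|^2 w_2$ appearing in Lemma \ref{lem003} are comparable once one tracks the weight ratio under the hypothesis $\theta_1+\theta_2 \geq \theta_3 + \theta_4 = 2$.

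For part~(a), apply the first inequality of Lemma \ref{lem003} to $v_i = (u - k_i)^+$ with $\eta = \eta_i$ (the initial-time term drops out because $\eta_i$ vanishes there), then use Proposition \ref{prop001} on $v_i\eta_i$ to produce
\begin{align*}
(k_{i+1}-k_i)^2 \, |A(k_{i+1},r_{i+1})|_{\nu_{w_2}}^{1/\chi} \leq C\cdot 2^{ci} \bigl( R^{-2} + R^{-(\theta_1+\theta_2)} \bigr)(M - k_0)^2 \, |A(k_i,r_i)|_{\nu_{w_1}},
\end{align*}
where $A(k,r) := \{u > k\} \cap Q_r$ and $c$ depends only on the structural data. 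Converting the $w_2$-density on the left into a $w_1$-density via \eqref{QAZ002} and writing $Y_i := |A(k_i,r_i)|_{\nu_{w_1}}/|Q_i|_{\nu_{w_1}}$, one arrives at a recurrence of the form $Y_{i+1} \leq (C^\ast)^i Y_i^{1+\delta}$ with some fixed $\delta > 0$ inherited from $1/\chi$ and the exponent $\theta_3/\theta_1$. Choosing $\tau_0$ small enough in terms of $C^\ast$ and $\delta$, the standard De Giorgi lemma gives $Y_i \to 0$, which is exactly \eqref{DZ001}.

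Part~(b) is symmetric, using the second inequality of Lemma \ref{lem003} with $\tilde v = (u - k)^-$ and levels $\tilde k_i = m + \varepsilon/2 + \varepsilon/2^{i+1}$. The only new issue is the sign of the $-C_0 \tilde v^3$ correction in the sup-in-time norm on the left-hand side, which is why $\varepsilon$ cannot be taken arbitrary: since $\tilde v \leq \varepsilon$, one picks $\varepsilon_0 := 1/(2C_0)$, and for $\varepsilon \leq \varepsilon_0$ the elementary bound $\tilde v^2 - C_0 \tilde v^3 \geq \tilde v^2/2$ restores a genuine sup-in-time norm on the left. The iteration then proceeds verbatim as in part~(a).

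The main obstacle is the mismatch between the two weights: the Caccioppoli inequality couples $w_1$ (in the time-derivative and sup-in-time terms) with $w_2$ (in the diffusion term and in Proposition \ref{prop001}), while the hypothesis is stated in $\nu_{w_1}$-measure. Threading the iteration through this mismatch is exactly where the structural assumptions $\theta_1 + \theta_2 \geq \theta_3 + \theta_4 = 2$ and $\theta_1/\theta_3 = \theta_2/\theta_4$ enter, via the density comparison \eqref{QAZ002}; and it is also the reason the parabolic cylinders are scaled by the intrinsic exponent $\theta_1 + \theta_2$ rather than the diffusive exponent $\theta_3 + \theta_4$. Everything else is bookkeeping of the dyadic constants and checking that the factor $2^{ci}$ produced by $|\nabla\eta_i|$, $|\partial_t\eta_i|$, and $1/(k_{i+1}-k_i)$ is beaten by the superlinear feedback exponent $1+\delta$.
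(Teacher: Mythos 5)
Your overall architecture matches the paper's: shrinking cylinders $Q_{r_i}$ with the intrinsic time scale $r_i^{\theta_1+\theta_2}$, the same levels $k_i$, the first Caccioppoli inequality of Lemma \ref{lem003} (with the initial-time term killed by the cut-off) fed into Proposition \ref{prop001}, and in part $(b)$ the restriction $\varepsilon\le\varepsilon_0\sim C_0^{-1}$ to make $\tilde v^2-C_0\tilde v^3$ a genuine square. However, there is a genuine gap in how you thread the two measures through the iteration. You propose to close the recurrence in the $\nu_{w_1}$-densities $Y_i$, "converting the $w_2$-density on the left into a $w_1$-density via \eqref{QAZ002}." But \eqref{QAZ002} is a one-sided inequality: it bounds a $\nu_{w_2}$-density \emph{from above} by a power of the $\nu_{w_1}$-density. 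The left-hand side of your displayed inequality is a $\nu_{w_2}$-quantity that must be bounded \emph{from below} to produce $Y_{i+1}$, and no reverse of \eqref{QAZ002} is available (since $w_1=w_2^{\theta_1/\theta_3}$ with $\theta_1/\theta_3\ge1$, a set can carry comparatively much more $w_1$-mass than $w_2$-mass). Moreover, even where \eqref{QAZ002} is legitimately needed on the right (the $|\nabla\eta_i|^2w_2$ term of the Caccioppoli inequality cannot be absorbed into $|A(k_i,r_i)|_{\nu_{w_1}}$ pointwise, because $w_2/w_1=|x'|^{\theta_3-\theta_1}|x|^{\theta_4-\theta_2}$ is unbounded on $B_R$), the exponent you would inherit is $\chi\cdot\theta_3/\theta_1$ with $\theta_3/\theta_1=2/(\theta_1+\theta_2)$; since $\theta_1+\theta_2$ may be large while $\chi=1+2/(n+\theta_1+\theta_2)$ is close to $1$, this product can drop below $1$, so the claimed superlinear feedback $Y_{i+1}\le (C^\ast)^iY_i^{1+\delta}$ with $\delta>0$ simply fails in general.

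The repair is to reverse your bookkeeping, which is what the paper does: keep the iteration entirely in the $\nu_{w_2}$-densities $F_i=|A(k_i,r_i)|_{\nu_{w_2}}/|Q_R|_{\nu_{w_2}}$, for which Caccioppoli plus Proposition \ref{prop001} give $F_{i+1}\le C2^{ci\chi}F_i^{\chi}$ with the genuine exponent $\chi>1$ (here the time-derivative term is converted pointwise from $w_1$ to $w_2$ via $|x'|^{\theta_1-\theta_3}|x|^{\theta_2-\theta_4}\le R^{\theta_1+\theta_2-\theta_3-\theta_4}$ together with $\theta_3+\theta_4=2$, which is the valid direction of the pointwise comparison), and invoke \eqref{QAZ002} exactly once, on the initial term, to translate the hypothesis \eqref{ZWZ007} into $F_0\le C\tau^{\theta_3/\theta_1}$. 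The exponent loss $\theta_3/\theta_1$ is then paid only at step $0$ and is absorbed into the choice of $\tau_0$; the conclusion $F_i\to0$ still yields \eqref{DZ001} almost everywhere since $w_2>0$ a.e.
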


\begin{remark}
From the proof of Lemma \ref{LEM0035} below, we see that the value of $\theta_{3}+\theta_{4}$ has to be restricted to $2$ and thus affect our final regular index in Theorems \ref{ZWTHM90} and \ref{THM060}.

\end{remark}

\begin{proof}
Without loss of generality, let $t_{0}=0$.

\noindent{\bf Step 1.}
For $\varepsilon>0$ and $i=0,1,2,...,$ set
\begin{align*}
r_{i}=\frac{R}{2}+\frac{R}{2^{i+1}},\quad k_{i}=M-\varepsilon+\frac{\varepsilon}{2}(1-2^{-i}).
\end{align*}
Choose a cut-off function $\eta_{i}\in C_{0}^{\infty}(Q_{r_{i}})$ such that
\begin{align*}
\eta_{i}=1\;\mathrm{in}\;Q_{r_{i+1}},\quad 0\leq\eta_{i}\leq1,\;|\nabla\eta_{i}|\leq\frac{C}{r_{i}-r_{i+1}},\;|\partial_{t}\eta_{i}|\leq\frac{C}{r_{i}^{\theta_{1}+\theta_{2}}-r_{i+1}^{\theta_{1}+\theta_{2}}}\;\,\mathrm{in}\;Q_{r_{i}}.
\end{align*}
Denote $v_{i}=(u-k_{i})^{+}$ and $A(k,\rho)=\{(x,t)\in Q_{\rho}: u>k\}$ for $k\in[m,M]$ and $\rho\in(0,R]$. Then combining Proposition \ref{prop001} and Lemma \ref{lem003}, we deduce
\begin{align}\label{AZQ001}
\left(\int_{Q_{R}}|\eta_{i}v_{i}|^{2\chi}w_{2}\right)^{\frac{1}{\chi}}\leq C\int_{Q_{R}}\left(|\nabla\eta_{i}|^{2}+|\partial_{t}\eta_{i}||x'|^{\theta_{1}-\theta_{3}}|x|^{\theta_{2}-\theta_{4}}\right)v_{i}^{2}w_{2},
\end{align}
where $\chi=(n+\theta_{1}+\theta_{2}+2)(n+\theta_{1}+\theta_{2})^{-1}$. Note that
\begin{align}\label{ZMQT001}
|\partial_{t}\eta_{i}|R^{\theta_{1}+\theta_{2}-\theta_{3}-\theta_{4}}\leq\frac{CR^{2-\theta_{3}-\theta_{4}}}{(r_{i}-r_{i+1})^{2}}=\frac{C}{(r_{i}-r_{i+1})^{2}}.
\end{align}
Therefore, we have
\begin{align*}
\int_{Q_{R}}\left(|\nabla\eta_{i}|^{2}+|\partial_{t}\eta_{i}||x'|^{\theta_{1}-\theta_{3}}|x|^{\theta_{2}-\theta_{4}}\right)v_{i}^{2}w_{2}\leq\frac{C(M-k_{i})^{2}}{(r_{i}-r_{i+1})^{2}}|A(k_{i},r_{i})|_{\nu_{w_{2}}},
\end{align*}
and
\begin{align*}
\left(\int_{Q_{R}}|\eta_{i}v_{i}|^{2\chi}w_{2}dxdt\right)^{\frac{1}{\chi}}\geq(k_{i+1}-k_{i})^{2}|A(k_{i+1},r_{i+1})|_{\nu_{w_{2}}}^{\frac{1}{\chi}}.
\end{align*}
Define
\begin{align*}
F_{i}:=\frac{|A(k_{i},r_{i})|_{\nu_{w_{2}}}}{|Q_{R}|_{\nu_{w_{2}}}}.
\end{align*}
Then we have
\begin{align}\label{QAZ001}
F_{i+1}\leq&(C2^{4(i+2)})^{\chi}R^{(n+\theta_{1}+\theta_{2}+\theta_{3}+\theta_{4})(\chi-1)-2\chi}F_{i}^{\chi}\notag\\
=&(C2^{4(i+2)})^{\chi}R^{\frac{2(\theta_{3}+\theta_{4}-2)}{n+\theta_{1}+\theta_{2}}}F_{i}^{\chi}=(C2^{4(i+2)})^{\chi}F_{i}^{\chi}\notag\\
\leq&\prod\limits^{i}_{s=0}(C2^{4(i+2-s)})^{\chi^{s+1}}F_{0}^{\chi^{i+1}}.
\end{align}
Observe from \eqref{ZMQT001}--\eqref{QAZ001} that the value of $\theta_{3}+\theta_{4}$ must be chosen to be $2$. A consequence of \eqref{QAZ002} and \eqref{QAZ001} shows that there exists a constant $i_{0}=i_{0}(n,\theta_{1},\theta_{2},\theta_{3})>0$ such that if $i\geq i_{0}$,
\begin{align*}
F_{i+1}\leq (C^{\ast})^{i+1}\left(\frac{|A(k_{0},r_{0})|_{\nu_{w_{1}}}}{|Q_{R}|_{\nu_{w_{1}}}}\right)^{\frac{\theta_{3}}{\theta_{1}}\chi^{i+1}}\leq (C^{\ast})^{i+1}\left(\frac{|A(k_{0},r_{0})|_{\nu_{w_{1}}}}{|Q_{R}|_{\nu_{w_{1}}}}\right)^{\frac{\theta_{3}}{\theta_{1}}\chi(i+1)}.
\end{align*}
By taking $\tau_{0}=(C^{\ast})^{-\frac{\theta_{1}}{\theta_{3}}\chi}$, we obtain that for any $\varepsilon>0$ and $0<\tau<\tau_{0}$, if \eqref{ZWZ007} holds, then
\begin{align*}
F_{i+1}\leq \Big(C^{\ast}\tau^{\frac{\theta_{3}}{\theta_{1}}\chi}\Big)^{i+1}=\left(\frac{\tau}{\tau_{0}}\right)^{\frac{\theta_{3}}{\theta_{1}}\chi(i+1)}\rightarrow0,\quad\mathrm{as}\;i\rightarrow\infty.
\end{align*}
That is, \eqref{DZ001} holds.

\noindent{\bf Step 2.} Analogously as before, pick
\begin{align*}
r_{i}=\frac{R}{2}+\frac{R}{2^{i+1}},\quad \tilde{k}_{i}=m+\varepsilon-\frac{\varepsilon}{2}(1-2^{-i}),\quad i\geq0.
\end{align*}
Let $\varepsilon_{0}=\frac{1}{C_{0}}$, where $C_{0}$ is given in Lemma \ref{lem003}. Denote $\tilde{v}_{i}=(u-\tilde{k}_{i})^{-}$. Then we obtain that for any $0<\varepsilon\leq\varepsilon_{0}$,
\begin{align*}
\tilde{v}_{i}^{2}-C_{0}\tilde{v}_{i}^{3}\geq(1-C_{0}\varepsilon)\tilde{v}_{i}^{2}\geq0,
\end{align*}
which implies that \eqref{AZQ001} holds with $v_{i}$ replaced by $\tilde{v}_{i}$. Then following the left proof of \eqref{DZ001} above, we deduce that \eqref{ZWZ009} holds. The proof is complete.

\end{proof}

The decaying estimates for the distribution function of $u$ are stated as follows.
\begin{lemma}\label{lem005}
Let the values of $n,p,q,\theta_{i},i=1,2,3,4$ be assumed in Theorem \ref{ZWTHM90} or Theorem \ref{THM060} with $\theta_{3}+\theta_{4}=2$ replaced by $0\leq\theta_{3}+\theta_{4}\leq2$. Suppose that $0<\gamma<1$, $0<R<\frac{1}{2}$, $0<a\leq1$, $-\frac{1}{2}<t_{0}\leq-aR^{\theta_{1}+\theta_{2}}$, and $\overline{m}\leq m_{a}\leq\inf\limits_{B_{2R}\times[t_{0},t_{0}+a R^{\theta_{1}+\theta_{2}}]}u\leq\sup\limits_{B_{2R}\times[t_{0},t_{0}+a R^{\theta_{1}+\theta_{2}}]}u\leq M_{a}\leq\overline{M}$. Then

$(a)$ for any $\varepsilon>0$, if
\begin{align*}
\frac{|\{x\in B_{R}:u(x,t)>M_{a}-\varepsilon\}|_{\mu_{w_{1}}}}{|B_{R}|_{\mu_{w_{1}}}}\leq1-\gamma,\quad\forall\;t\in[t_{0},t_{0}+aR^{\theta_{1}+\theta_{2}}],
\end{align*}
then for any $j\geq1$,
\begin{align}\label{DEC001}
\frac{|\{(x,t)\in B_{R}\times[t_{0},t_{0}+aR^{\theta_{1}+\theta_{2}}]:u(x,t)>M_{a}-\frac{\varepsilon}{2^{j}}\}|_{\nu_{w_{1}}}}{|B_{R}\times[t_{0},t_{0}+aR^{\theta_{1}+\theta_{2}}]|_{\nu_{w_{1}}}}\leq\frac{C}{\sqrt[q]{\gamma}\sqrt{a}j^{\frac{2-q}{2q}}};
\end{align}

$(b)$ for any $0<\varepsilon\leq\varepsilon_{0}=C_{0}^{-1}$ with $C_{0}=C_{0}(n,p,\lambda,\overline{m},\overline{M})$ given by Lemma \ref{lem003}, if
\begin{align*}
\frac{|\{x\in B_{R}:u(x,t)<m_{a}+\varepsilon\}|_{\mu_{w_{1}}}}{|B_{R}|_{\mu_{w_{1}}}}\leq1-\gamma,\quad\forall\;t\in[t_{0},t_{0}+aR^{\theta_{1}+\theta_{2}}],
\end{align*}
then for any $j\geq1$,
\begin{align*}
\frac{|\{(x,t)\in B_{R}\times[t_{0},t_{0}+aR^{\theta_{1}+\theta_{2}}]:u(x,t)<m_{a}+\frac{\varepsilon}{2^{j}}\}|_{\nu_{w_{1}}}}{|B_{R}\times[t_{0},t_{0}+aR^{\theta_{1}+\theta_{2}}]|_{\nu_{w_{1}}}}\leq\frac{C}{\sqrt[q]{\gamma}\sqrt{a}j^{\frac{2-q}{2q}}},
\end{align*}
where $C=C(n,p,q,\theta_{1},\theta_{2},\theta_{3},\lambda,\overline{m},\overline{M})$.

\end{lemma}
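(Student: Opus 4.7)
My plan is to adapt the elliptic De Giorgi iteration of Lemma \ref{lem005ZZW} to the space-time cylinder $B_R\times I$, where $I=[t_0,t_0+aR^{\theta_1+\theta_2}]$, with care for the interplay between the two anisotropic weights $w_1$ and $w_2$. I only describe part (a); part (b) follows by the same scheme once one notes that the Caccioppoli bound for $\tilde v=(u-k)^-$ in Lemma \ref{lem003} requires $\varepsilon\le\varepsilon_0=1/C_0$ so that $\tilde v^2-C_0\tilde v^3\ge\tfrac12\tilde v^2$ on the relevant level sets.

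Fix $i\ge 0$, set $k_i=M_a-\varepsilon/2^i$, and write $A(k_i,R;t)=\{x\in B_R:u(x,t)>k_i\}$. For each $t\in I$ the hypothesis yields $|B_R\setminus A(k_i,R;t)|_{\mu_{w_1}}\ge\gamma|B_R|_{\mu_{w_1}}\sim\gamma R^{n+\theta_1+\theta_2}$, so Proposition \ref{prop002}, applied to $x\mapsto u(x,t)$ with weight $w_1$ and levels $k=k_i<k_{i+1}=l$, gives after taking $q$-th roots
\begin{align*}
|A(k_{i+1},R;t)|_{\mu_{w_1}}\le\frac{C\,2^{i+1}}{\varepsilon\gamma^{1/q}}R^{\frac{(n+\theta_1+\theta_2)(q-1)}{q}+1}\left(\int_{\{k_i<u(\cdot,t)<k_{i+1}\}\cap B_R}|\nabla u|^{q}w_{1}\,dx\right)^{1/q}.
\end{align*}
Integrating in $t$ over $I$ and applying Hölder in time with exponents $q/(q-1)$ and $q$ converts this into a $\nu_{w_1}$-bound on $\widetilde A_{i+1}:=\{(x,t)\in B_R\times I: u>k_{i+1}\}$:
\begin{align*}
|\widetilde A_{i+1}|_{\nu_{w_1}}\le\frac{C\,2^{i+1}}{\varepsilon\gamma^{1/q}}R^{\frac{(n+\theta_1+\theta_2)(q-1)}{q}+1}(aR^{\theta_1+\theta_2})^{(q-1)/q}\left(\iint_{\widetilde A_i\setminus\widetilde A_{i+1}}|\nabla u|^q w_1\right)^{1/q}.
\end{align*}

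To bridge this gradient integral (with weight $w_1$, exponent $q$) to the output of Lemma \ref{lem003} (which provides $L^2$ with weight $w_2$), I use Hölder in space-time with exponents $2/q$ and $2/(2-q)$:
\begin{align*}
\iint|\nabla u|^{q}w_{1}\le\left(\iint|\nabla u|^{2}w_{2}\right)^{q/2}\left(\iint w_{1}^{2/(2-q)}w_{2}^{-q/(2-q)}\right)^{(2-q)/2}.
\end{align*}
The assumption $\theta_1/\theta_3=\theta_2/\theta_4$ makes the residual weight $w_1^{2/(2-q)}w_2^{-q/(2-q)}$ again a product $|x'|^{a}|x|^{b}$, so Lemma \ref{MWQAZ090} controls its integral by a clean power of $R$. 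Next, applying Lemma \ref{lem003} with a cut-off $\eta$ equal to $1$ on $B_R\times I$, supported in $B_{2R}\times(t_0-R^{\theta_1+\theta_2},\,t_0+aR^{\theta_1+\theta_2}]$ and vanishing at its bottom, and using $|\nabla\eta|^2\sim R^{-2}$, $|\partial_t\eta|\sim R^{-(\theta_1+\theta_2)}$, bounds $\iint|\nabla u|^2 w_2$ on $\{k_i<u<k_{i+1}\}$ by essentially $C(\varepsilon/2^i)^2 R^{n+\theta_1+\theta_2-2}\cdot R^{\theta_1+\theta_2}$. Letting $F_i=|\widetilde A_i|_{\nu_{w_1}}/|B_R\times I|_{\nu_{w_1}}$, the powers of $R$ cancel (this is precisely where $0\le\theta_3+\theta_4\le 2$ together with $R<\tfrac12$ enters), and I obtain the telescoping inequality
\begin{align*}
F_{i+1}^{2q/(2-q)}\le\frac{C}{a\,\gamma^{2/(2-q)}}\bigl(F_i-F_{i+1}\bigr).
\end{align*}
Summing over $i=0,\dots,j-1$ and using monotonicity $F_j\le F_i$ gives $jF_j^{2q/(2-q)}\le C/(a\gamma^{2/(2-q)})$, i.e. $F_j\le C/\bigl(\gamma^{1/q}\sqrt{a}\,j^{(2-q)/(2q)}\bigr)$, which is exactly \eqref{DEC001}.

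The principal obstacle is the mismatch between the two weights: the isoperimetric inequality lives naturally in $(L^q, w_1)$ while Caccioppoli delivers $(L^2, w_2)$. Reconciling them via Hölder hinges on the compatibility $\theta_1/\theta_3=\theta_2/\theta_4$, which makes the residual weight genuinely of the form $|x'|^{a}|x|^{b}$ and thus accessible to Lemma \ref{MWQAZ090}, together with $0\le\theta_3+\theta_4\le 2$, which ensures the parabolic scalings of $|\nabla\eta|^2 w_2$ and $|\partial_t\eta|w_1$ over a cylinder of height $R^{\theta_1+\theta_2}$ are dominated by the $w_1$-measure of $B_R\times I$. Carefully bookkeeping these scalings, so that all powers of $R$ cancel and only the $\sqrt{a}$ factor survives in the denominator, is the technical crux; once that is done, the summation step is literally the one from Lemma \ref{lem005ZZW} with $p=2$.
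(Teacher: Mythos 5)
Your overall architecture is the same as the paper's (slice-wise isoperimetric inequality with weight $w_1$, H\"older in time, a second H\"older step to pass from $(L^q,w_1)$ to $(L^2,w_2)$, Caccioppoli, then telescoping), but two of the steps, as you describe them, would not go through.

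First, the residual-weight integral. You bound $\iint_{\widetilde A_i\setminus\widetilde A_{i+1}}w_1^{2/(2-q)}w_2^{-q/(2-q)}$ ``by a clean power of $R$'' via Lemma \ref{MWQAZ090}, i.e.\ by the integral over the whole cylinder. But then nothing on the right-hand side retains the measure of the transition set: the gradient factor $\iint|\nabla u|^2w_2$ is also estimated over the full cylinder by Caccioppoli. Consequently the difference $F_i-F_{i+1}$ in your claimed telescoping inequality has no source, and you would only get $F_{i+1}^{2q/(2-q)}\le C/(a\gamma^{2/(2-q)})$, which gives no decay in $j$. The correct move (and the paper's) is a \emph{pointwise} bound on $B_R$: since $\theta_1/\theta_3=\theta_2/\theta_4$ and $\theta_1+\theta_2\ge\theta_3+\theta_4$, one has $w_1^{2/(2-q)}w_2^{-q/(2-q)}=w_1\cdot\big(|x'|^{\theta_1-\theta_3}|x|^{\theta_2-\theta_4}\big)^{q/(2-q)}\le CR^{q(\theta_1+\theta_2-\theta_3-\theta_4)/(2-q)}\,w_1$, so the integral over $\widetilde A_i\setminus\widetilde A_{i+1}$ is $\le CR^{\cdots}\,|\widetilde A_i\setminus\widetilde A_{i+1}|_{\nu_{w_1}}$, and the factor $|\widetilde A_i\setminus\widetilde A_{i+1}|_{\nu_{w_1}}^{(2-q)/(2q)}$ survives to produce $F_i-F_{i+1}$ after raising to the power $2q/(2-q)$.

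Second, the cut-off. You take $\eta$ supported in $B_{2R}\times(t_0-R^{\theta_1+\theta_2},t_0+aR^{\theta_1+\theta_2}]$, vanishing at the bottom, so that the initial-time term in Lemma \ref{lem003} disappears and $|\partial_t\eta|\sim R^{-(\theta_1+\theta_2)}$ enters. But the term $C_0\iint\eta|\partial_t\eta|w_1v^2$ is then integrated over times $t<t_0$, where the hypothesis $u\le M_a$ is not available; there $(u-k_i)^+$ is only bounded by $\overline M-k_i$, not by $\varepsilon/2^i$, so the Caccioppoli output cannot be reduced to $C(\varepsilon/2^i)^2R^{n+\theta_1+\theta_2+\theta_3+\theta_4-2}$. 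The paper instead uses a purely spatial cut-off $\eta\in C_0^\infty(B_{2R})$ with $\eta=1$ on $B_R$, keeps the initial-slice term $\int_{B_{2R}}(v^2+C_0v^3)\eta^2w_1\,dx\big|_{t_0}\le C(\varepsilon/2^i)^2R^{n+\theta_1+\theta_2}$ (legitimate because $t_0$ belongs to the interval where $u\le M_a$), and absorbs it into $R^{n+\theta_1+\theta_2+\theta_3+\theta_4-2}$ precisely because $\theta_3+\theta_4\le2$ and $R<1$ --- this is where that hypothesis actually enters, rather than through the time-derivative of a cut-off. With these two corrections your summation step and the extraction of the $\sqrt a$ and $j^{(2-q)/(2q)}$ factors are exactly the paper's.
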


\begin{remark}
Since the proof of Lemma \ref{lem005} only uses the aforementioned Proposition \ref{prop002} and Lemma \ref{lem003} instead of Lemma \ref{LEM0035}, we can obtain a larger range for the value of $\theta_{3}+\theta_{4}$ than that in Lemma \ref{LEM0035}.

\end{remark}

\begin{proof}
\noindent{\bf Step 1.}
For $i\geq0$, denote $k_{i}=M_{a}-\frac{\varepsilon}{2^{i}}$ and
\begin{align*}
A(k_{i},R;t)=B_{R}\cap\{u(\cdot,t)>k_{i}\},\quad A(k_{i},R)=(B_{R}\times[t_{0},t_{0}+aR^{\theta_{1}+\theta_{2}}])\cap\{u>k_{i}\}.
\end{align*}
It then follows from \eqref{pro001} that for $1<q<2,$
\begin{align}\label{WMZ001}
&(k_{i+1}-k_{i})^{q}|A(k_{i+1},R;t)|_{\mu_{w_{1}}}^{q}|B_{R}\setminus A(k_{i},R;t)|_{\mu_{w_{1}}}\notag\\
&\leq CR^{q(n+\theta_{1}+\theta_{2}+1)}\int_{A(k_{i},R;t)\setminus A(k_{i+1},R;t)}|\nabla u|^{q}w_{1}dx.
\end{align}
From the assumed condition, we have
\begin{align*}
|B_{R}\setminus A(k_{i},R;t)|_{\mu_{w_{1}}}\geq\gamma|B_{R}|_{\mu_{w_{1}}}=C(n,\theta_{1})\gamma R^{n+\theta_{1}+\theta_{2}}.
\end{align*}
Substituting this into \eqref{WMZ001} and integrating from $t_{0}$ to $t_{0}+a R^{\theta_{1}+\theta_{2}}$, we deduce from H\"{o}lder's inequality that
\begin{align*}
&\int^{t_{0}+aR^{\theta_{1}+\theta_{2}}}_{t_{0}}|A(k_{i+1},R;t)|_{\mu_{w_{1}}}dt\notag\\
&\leq\frac{C2^{i+1}}{\varepsilon\sqrt[q]{\gamma}}R^{\frac{(n+\theta_{1}+\theta_{2})(q-1)}{q}+1}\int_{t_{0}}^{t_{0}+a R^{\theta_{1}+\theta_{2}}}\left(\int_{A(k_{i},R;t)\setminus A(k_{i+1},R;t)}|\nabla u|^{q}w_{1}dx\right)^{\frac{1}{q}}dt\notag\\
&\leq\frac{C2^{i+1}a^{\frac{q-1}{q}}}{\varepsilon\sqrt[q]{\gamma}}R^{\frac{(n+2\theta_{1}+2\theta_{2})(q-1)}{q}+1}\left(\int_{A(k_{i},R)\setminus A(k_{i+1},R)}|\nabla u|^{q}w_{1}dxdt\right)^{\frac{1}{q}}.
\end{align*}
In light of $1<q<2$, it follows from H\"{o}lder's inequality again that
\begin{align*}
&\left(\int_{A(k_{i},R)\setminus A(k_{i+1},R)}|\nabla u|^{q}w_{1}\right)^{\frac{1}{q}}\notag\\
&\leq\left(\int_{A(k_{i},R)\setminus A(k_{i+1},R)}|\nabla u|^{2}w_{2}\right)^{\frac{1}{2}}\left(\int_{A(k_{i},R)\setminus A(k_{i+1},R)}|x'|^{\frac{2\theta_{1}-q\theta_{3}}{2-q}}|x|^{\frac{2\theta_{2}-q\theta_{4}}{2-q}}\right)^{\frac{2-q}{2q}}\notag\\
&\leq R^{\frac{\theta_{1}+\theta_{2}-\theta_{3}-\theta_{4}}{2}}|A(k_{i},R)\setminus A(k_{i+1},R)|_{\nu_{w_{1}}}^{\frac{2-q}{2q}}\left(\int_{B_{R}\times[t_{0},t_{0}+aR^{\theta_{1}+\theta_{2}}]}|\nabla (u-k_{i})^{+}|^{2}w_{2}\right)^{\frac{1}{2}}.
\end{align*}
Pick a cut-off function $\eta\in C_{0}^{\infty}(B_{2R})$ such that
\begin{align}\label{ETA003}
\eta=1\;\mathrm{in}\;B_{R},\quad0\leq\eta\leq1,\;|\nabla\eta|\leq\frac{C(n)}{R}\;\,\mathrm{in}\;B_{2R}.
\end{align}
Then from Lemma \ref{lem003}, we deduce
\begin{align*}
&\left(\int_{B_{R}\times[t_{0},t_{0}+aR^{\theta_{1}+\theta_{2}}]}|\nabla (u-k_{i})^{+}|^{2}w_{2}\right)^{\frac{1}{2}}\notag\\
&\leq C\left(\int_{B_{2R}}|(u-k_{i})^{+}(x,t_{0})|^{2}w_{1}+\frac{1}{R^{2}}\int_{B_{2R}\times[t_{0},t_{0}+aR^{\theta_{1}+\theta_{2}}]}|(u-k_{i})^{+}|^{2}w_{2}\right)^{\frac{1}{2}}\notag\\
&\leq\frac{C\varepsilon}{2^{i}}R^{\frac{n+\theta_{1}+\theta_{2}+\theta_{3}+\theta_{4}-2}{2}},
\end{align*}
where in the last inequality we used the assumed condition that $0\leq\theta_{3}+\theta_{4}\leq2$. Therefore, combining these above facts, we obtain
\begin{align*}
|A(k_{i+1},R)|_{\nu_{w_{1}}}\leq&\frac{Ca^{\frac{q-1}{q}}}{\sqrt[q]{\gamma}}R^{\frac{(n+2\theta_{1}+2\theta_{2})(3q-2)}{2q}}|A(k_{i},R)\setminus A(k_{i+1},R)|_{\nu_{w_{1}}}^{\frac{2-q}{2q}},
\end{align*}
which yields that for $j\geq1$,
\begin{align*}
j|A(k_{j},R)|_{\nu_{w_{1}}}^{\frac{2q}{2-q}}\leq&\sum^{j-1}_{i=0}|A(k_{i+1},R)|_{\nu_{w_{1}}}^{\frac{2q}{2-q}}\notag\\
\leq&\frac{Ca^{\frac{2(q-1)}{2-q}}}{\gamma^{\frac{2}{2-q}}}R^{\frac{(n+2\theta_{1}+2\theta_{2})(3q-2)}{2-q}}|B_{R}\times[t_{0},t_{0}+aR^{\theta_{1}+\theta_{2}}]|_{\nu_{w_{1}}}\notag\\
\leq&\frac{C}{\gamma^{\frac{2}{2-q}}a^{\frac{q}{2-q}}}|B_{R}\times[t_{0},t_{0}+aR^{\theta_{1}+\theta_{2}}]|_{\nu_{w_{1}}}^{\frac{2q}{2-q}}.
\end{align*}
Then \eqref{DEC001} holds.

\noindent{\bf Step 2.}
For $i\geq0$, write
\begin{align*}
\tilde{k}_{i}=m_{a}+\frac{\varepsilon}{2^{i}},
\end{align*}
and
\begin{align*}
\tilde{A}(k_{i},R;t)=B_{R}\cap\{u(\cdot,t)<k_{i}\},\quad \tilde{A}(k_{i},R)=(B_{R}\times[t_{0},t_{0}+aR^{\theta_{1}+\theta_{2}}])\cap\{u<k_{i}\}.
\end{align*}
Using \eqref{pro002}, we obtain that for $1<q<2,$
\begin{align*}
&(\tilde{k}_{i}-\tilde{k}_{i+1})^{q}|\tilde{A}(\tilde{k}_{i+1},R;t)|_{\mu_{w_{1}}}^{q}|B_{R}\setminus \tilde{A}(\tilde{k}_{i},R;t)|_{\mu_{w_{1}}}\notag\\
&\leq CR^{q(n+\theta_{1}+\theta_{2}+1)}\int_{\tilde{A}(\tilde{k}_{i},R;t)\setminus \tilde{A}(\tilde{k}_{i+1},R;t)}|\nabla u|^{q}w_{1}dx.
\end{align*}
Observe by the assumed condition that
\begin{align*}
|B_{R}\setminus \tilde{A}(\tilde{k}_{i},R;t)|_{\mu_{w_{1}}}\geq\gamma|B_{R}|_{\mu_{w_{1}}}=C(n,\theta_{1},\theta_{2})\gamma R^{n+\theta_{1}+\theta_{2}}.
\end{align*}
Analogously as above, integrating from $t_{0}$ to $t_{0}+a R^{\theta_{1}+\theta_{2}}$ and using H\"{o}lder's inequality, we have
\begin{align*}
&\int^{t_{0}+aR^{\theta_{1}+\theta_{2}}}_{t_{0}}|\tilde{A}(\tilde{k}_{i+1},R;t)|_{\mu_{w_{1}}}dt\notag\\
%&\leq\frac{C2^{i+1}}{\varepsilon\sqrt[p]{\gamma}}R^{\frac{(n+\theta_{1})(p-1)}{p}+1}\int_{t_{0}}^{t_{0}+a R^{\theta_{1}}}\left(\int_{A(k_{i},R;t)\setminus A(k_{i+1},R;t)}|\nabla u|^{p}w_{1}dx\right)^{\frac{1}{p}}dt\notag\\
&\leq\frac{C2^{i+1}a^{\frac{q-1}{q}}}{\varepsilon\sqrt[q]{\gamma}}R^{\frac{(n+2\theta_{1}+2\theta_{2})(q-1)}{q}+1}\left(\int_{\tilde{A}(\tilde{k}_{i},R)\setminus \tilde{A}(\tilde{k}_{i+1},R)}|\nabla u|^{q}w_{1}dxdt\right)^{\frac{1}{q}},
\end{align*}
and
\begin{align*}
&\int_{\tilde{A}(\tilde{k}_{i},R)\setminus A(\tilde{k}_{i+1},R)}|\nabla u|^{q}w_{1}\notag\\
&\leq\left(\int_{\tilde{A}(\tilde{k}_{i},R)\setminus \tilde{A}(\tilde{k}_{i+1},R)}|\nabla u|^{2}w_{2}\right)^{\frac{q}{2}}\left(\int_{\tilde{A}(\tilde{k}_{i},R)\setminus \tilde{A}(\tilde{k}_{i+1},R)}|x'|^{\frac{2\theta_{1}-q\theta_{3}}{2-q}}|x|^{\frac{2\theta_{2}-q\theta_{4}}{2-q}}\right)^{\frac{2-q}{2}}\notag\\
&\leq R^{\frac{q(\theta_{1}+\theta_{2}-\theta_{3}-\theta_{4})}{2}}|\tilde{A}(\tilde{k}_{i},R)\setminus \tilde{A}(\tilde{k}_{i+1},R)|_{\nu_{w_{1}}}^{\frac{2-q}{2q}}\left(\int_{B_{R}\times[t_{0},t_{0}+aR^{\theta_{1}+\theta_{2}}]}|\nabla (u-\tilde{k}_{i})^{-}|^{2}w_{2}\right)^{\frac{q}{2}}.
\end{align*}
For any $0<\varepsilon\leq\varepsilon_{0}=C_{0}^{-1}$ with $C_{0}$ given in Lemma \ref{lem003}, we know that
\begin{align*}
[(u-\tilde{k}_{i})^{-}]^{2}-C_{0}[(u-\tilde{k}_{i})^{-}]^{3}\geq(1-C_{0}\varepsilon)[(u-\tilde{k}_{i})^{-}]^{2}\geq0.
\end{align*}
Therefore, in view of $0\leq\theta_{3}+\theta_{4}\leq2$ and applying Lemma \ref{lem003} with $\eta$ defined by \eqref{ETA003}, we derive
\begin{align*}
&\int_{B_{R}\times[t_{0},t_{0}+aR^{\theta_{1}+\theta_{2}}]}|\nabla (u-\tilde{k}_{i})^{-}|^{2}w_{2}\notag\\
&\leq C\left(\int_{B_{2R}}|(u-\tilde{k}_{i})^{-}(x,t_{0})|^{2}w_{1}+\frac{1}{R^{2}}\int_{B_{R}\times[t_{0},t_{0}+aR^{\theta_{1}+\theta_{2}}]}|(u-\tilde{k}_{i})^{-}|^{2}w_{2}\right)\notag\\
&\leq\frac{C\varepsilon^{2}}{4^{i}}R^{n+\theta_{1}+\theta_{2}+\theta_{3}+\theta_{4}-2}.
\end{align*}
Then we deduce
\begin{align*}
|\tilde{A}(\tilde{k}_{i+1},R)|_{\nu_{w_{1}}}^{\frac{2q}{2-q}}\leq&\frac{Ca^{\frac{2(q-1)}{2-q}}}{\gamma^{\frac{2}{2-q}}}R^{\frac{(n+2\theta_{1}+2\theta_{2})(3q-2)}{2-q}}|\tilde{A}(\tilde{k}_{i},R)\setminus \tilde{A}(\tilde{k}_{i+1},R)|_{\nu_{w_{1}}}.
\end{align*}
This leads to that for $j\geq1$,
\begin{align*}
j|\tilde{A}(\tilde{k}_{j},R)|_{\nu_{w_{1}}}^{\frac{2q}{2-q}}\leq&\sum^{j-1}_{i=0}|\tilde{A}(\tilde{k}_{i+1},R)|_{\nu_{w_{1}}}^{\frac{2q}{2-q}}\notag\\
\leq&\frac{Ca^{\frac{2(q-1)}{2-q}}}{\gamma^{\frac{2}{2-q}}}R^{\frac{(n+2\theta_{1}+2\theta_{2})(3q-2)}{2-q}}|B_{R}\times[t_{0},t_{0}+aR^{\theta_{1}+\theta_{2}}]|_{\nu_{w_{1}}}\notag\\
\leq&\frac{C}{\gamma^{\frac{2}{2-q}}a^{\frac{q}{2-q}}}|B_{R}\times[t_{0},t_{0}+aR^{\theta_{1}+\theta_{2}}]|_{\nu_{w_{1}}}^{\frac{2q}{2-q}}.
\end{align*}
The proof is complete.

\end{proof}

We now give explicit estimates for the distribution function of $u$ at each time slice from the starting time.
\begin{lemma}\label{LEM006}
Assume as in Theorem \ref{ZWTHM90} or Theorem \ref{THM060}. Let $0<\gamma<1$, $0<R<\frac{1}{2}$, $-\frac{1}{2}<t_{0}\leq-R^{\theta_{1}+\theta_{2}}$ and $\overline{m}\leq m_{1}\leq\inf\limits_{B_{2R}\times[t_{0},t_{0}+R^{\theta_{1}+\theta_{2}}]}u\leq\sup\limits_{B_{2R}\times[t_{0},t_{0}+R^{\theta_{1}+\theta_{2}}]}u\leq M_{1}\leq\overline{M}$. Then there exist a small constant $\bar{\varepsilon}_{0}=\bar{\varepsilon}_{0}(n,p,\theta_{1},\theta_{2},\lambda,\gamma,\overline{m},\overline{M})>0$ and a large constant $\bar{l}_{0}=\bar{l}_{0}(n,p,q,\theta_{1},\theta_{2},\theta_{3},\lambda,\gamma,\overline{m},\overline{M})>1$ such that

$(i)$ for every $0<\varepsilon\leq\bar{\varepsilon}_{0}$, if
\begin{align}\label{WMDN001}
\frac{|\{x\in B_{R}: u(x,t_{0})>M_{1}-\varepsilon\}|_{\mu_{w_{1}}}}{|B_{R}|_{\mu_{w_{1}}}}\leq1-\gamma,
\end{align}
then for any $t_{0}\leq t\leq t_{0}+R^{\theta_{1}+\theta_{2}},$
\begin{align}\label{DQAF001}
\frac{|\{x\in B_{R}: u(x,t)>M_{1}-2^{-l_{0}}\varepsilon\}|_{\mu_{w_{1}}}}{|B_{R}|_{\mu_{w_{1}}}}\leq1-\frac{\gamma}{2};
\end{align}

$(ii)$ for every $0<\varepsilon\leq\bar{\varepsilon}_{0}$, if
\begin{align}\label{QPZA001}
\frac{|\{x\in B_{R}: u(x,t_{0})<m_{1}+\varepsilon\}|_{\mu_{w_{1}}}}{|B_{R}|_{\mu_{w_{1}}}}\leq1-\gamma,
\end{align}
then for any $t_{0}\leq t\leq t_{0}+R^{\theta_{1}+\theta_{2}},$
\begin{align}\label{WZPMQ001}
\frac{|\{x\in B_{R}: u(x,t)<m_{1}+2^{-l_{0}}\varepsilon\}|_{\mu_{w_{1}}}}{|B_{R}|_{\mu_{w_{1}}}}\leq1-\frac{\gamma}{2}.
\end{align}

\end{lemma}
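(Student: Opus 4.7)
The plan is to apply the De Giorgi--DiBenedetto logarithmic test-function technique; I detail part (i), since part (ii) is analogous by reflection (the restriction $\varepsilon \leq \overline{\varepsilon}_0$ in (ii) mirrors the corresponding one in Lemma \ref{LEM0035}(b)). To keep the constants from the fast-diffusion time derivative clean, I first pass to $v := u^p \in [\overline{m}^p, \overline{M}^p]$; the equation \eqref{PROBLEM001} becomes $w_1 \partial_t v = \operatorname{div}(\tilde A\, w_2 \nabla v)$ with $\tilde A := \tfrac{1}{p} A\, v^{1/p - 1}$ uniformly elliptic (constants depending on $p, \lambda, \overline m, \overline M$). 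Since $u \mapsto u^p$ is monotone with bounded derivative on $[\overline m, \overline M]$, hypothesis \eqref{WMDN001} translates (with threshold shift $O(\varepsilon)$, absorbed into $\overline{\varepsilon}_0$) to $|\{v(\cdot, t_0) > N - \tilde\varepsilon\} \cap B_R|_{\mu_{w_1}} \leq (1-\gamma)|B_R|_{\mu_{w_1}}$, where $N := M_1^p$ and $\tilde\varepsilon \asymp p M_1^{p-1} \varepsilon$, and the target \eqref{DQAF001} becomes the corresponding measure bound for $v$ at the threshold $N - 2^{-l_0}\tilde\varepsilon$.

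For $l_0$ large (to be chosen), introduce the logarithmic function
\[
\Psi(v) := \left[\ln \frac{\tilde\varepsilon}{\tilde\varepsilon - (v - k)^+ + \tilde\varepsilon\, 2^{-l_0}}\right]^+, \quad k := N - \tilde\varepsilon,
\]
so that $\Psi \equiv 0$ on $\{v \leq k\}$, $\Psi \leq l_0 \ln 2$, and $\Psi''(v) = (\Psi'(v))^2$ on $\{v > k\}$; the target set $\{v(\cdot, t) > N - 2^{-l_0}\tilde\varepsilon\}$ coincides with $\{\Psi(v(\cdot, t)) \geq (l_0 - 1)\ln 2\}$. Pick a time-independent cutoff $\eta \in C_0^\infty(B_{(1+\sigma)R})$ with $\eta \equiv 1$ on $B_R$ and $|\nabla \eta| \leq C/(\sigma R)$, where $\sigma \in (0,1)$ is a parameter. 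Testing the weak formulation for $v$ against $\varphi := 2\eta^2 \Psi(v) \Psi'(v)$ and integrating over $(t_0, t)$, the parabolic term integrates cleanly (because $\eta$ is time-independent and $v$ enters $\partial_t v$ linearly) to $\int w_1 \eta^2 \Psi^2(v(\cdot, s))\, dx \big|_{s=t_0}^{s=t}$. In the spatial term, the identity $\Psi''(v) = (\Psi'(v))^2$ produces a factor $(1+\Psi)$ that absorbs the principal cross-term contribution through a Cauchy--Schwarz bound exploiting $\Psi^2/(1+\Psi) \leq \Psi$, yielding
\[
\int_{B_R} w_1 \Psi^2(v(\cdot, t))\, dx \leq \int_{B_{(1+\sigma)R}} w_1 \eta^2 \Psi^2(v(\cdot, t_0))\, dx + C \int_{t_0}^{t_0 + R^{\theta_1+\theta_2}} \int w_2 \Psi(v)\, |\nabla \eta|^2\, dx\, dt.
\]

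Bound the right-hand side: the initial term is at most $(l_0 \ln 2)^2 [(1-\gamma) + C\sigma]|B_R|_{\mu_{w_1}}$ by using the (reformulated) hypothesis on $B_R$ together with $|B_{(1+\sigma)R} \setminus B_R|_{\mu_{w_1}} \leq C\sigma |B_R|_{\mu_{w_1}}$ (which follows from the doubling estimate of Theorem \ref{Lem009}). The gradient term is bounded by $(C l_0/\sigma^2)|B_R|_{\mu_{w_1}}$ via $\Psi \leq l_0 \ln 2$, $|\nabla \eta|^2 \leq C/(\sigma R)^2$, the relation $\theta_3 + \theta_4 = 2$ (so $|B_{2R}|_{\mu_{w_2}} \leq C R^{n+2}$), and time length $R^{\theta_1+\theta_2}$. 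Applying Chebyshev at level $(l_0-1)\ln 2$ and writing $A_{l_0}(t) := \{v(\cdot, t) > N - 2^{-l_0}\tilde\varepsilon\} \cap B_R$,
\[
|A_{l_0}(t)|_{\mu_{w_1}} \leq \Big(\frac{l_0}{l_0-1}\Big)^2 \Big[(1-\gamma) + C\sigma + \frac{C}{l_0 \sigma^2 \ln 2}\Big]|B_R|_{\mu_{w_1}}.
\]
First choose $\sigma = \sigma(\gamma)$ so that $C\sigma \leq \gamma/8$, then $l_0 = l_0(\gamma, \sigma)$ so that $(l_0/(l_0-1))^2 \leq 1 + \gamma/(8(1-\gamma))$ and $C/(l_0\sigma^2) \leq \gamma/8$; this yields \eqref{DQAF001}. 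The main technical obstacle is obtaining a clean leading constant $C_1 = 1$ in the energy inequality: a direct log estimate applied to $u$ would introduce an unavoidable factor $\sim(\overline{M}/\overline{m})^{p-1}$ from $\partial_t u^p = p u^{p-1} \partial_t u$, spoiling the $(1-\gamma/2)$ conclusion. Passing to $v = u^p$ linearizes the time derivative and removes this obstruction, at the price of the $O(\varepsilon)$ threshold shift absorbed into $\overline{\varepsilon}_0$.
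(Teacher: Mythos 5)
Your argument is correct, but it is a genuinely different route from the paper's. The paper never introduces a logarithmic test function: it works with the plain truncation $(u-k)^{\pm}$ via the Caccioppoli inequality of Lemma \ref{lem003}, optimizes the cutoff width $\sigma$ by setting $(1-\sigma)^{3}$ equal to the space--time measure ratio $|A^{a}(M_{1}-\varepsilon,R)|_{\nu_{w_{2}}}/|Q_{R}|_{\nu_{w_{2}}}$, subdivides $[t_{0},t_{0}+R^{\theta_{1}+\theta_{2}}]$ into $N=a^{-1}$ short subintervals so that this ratio is at most $a$, and then runs an induction across the subintervals in which Lemma \ref{lem005} is invoked at each step to make the space--time measure of a deeper super-level set small; this is why the paper's $\bar{l}_{0}$ depends on $q$ and why $\varepsilon\leq\bar{\varepsilon}_{0}$ is needed even in part (i) (the $(1+\overline{C}\varepsilon)$ factor from the Taylor expansion of $\mathcal H$). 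Your version is the classical De Giorgi--DiBenedetto logarithmic lemma transplanted to the weighted setting: the substitution $v=u^{p}$ linearizes the time derivative (keeping $\tilde A$ uniformly elliptic since $\overline m\leq u\leq\overline M$), the identity $\Psi''=(\Psi')^{2}$ together with $\Psi^{2}/(1+\Psi)\leq\Psi$ makes the gradient term grow only linearly in $l_{0}$ while Chebyshev divides by $(l_{0}-1)^{2}$, and the condition $\theta_{3}+\theta_{4}=2$ enters exactly where it does in the paper (converting $R^{-2}|B_{2R}|_{\mu_{w_{2}}}R^{\theta_{1}+\theta_{2}}$ into $|B_{R}|_{\mu_{w_{1}}}$). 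This buys a one-shot estimate on the whole time interval, no appeal to Lemma \ref{lem005}, no time subdivision, an $l_{0}$ depending only on $\gamma$ and structural constants, and (in fact) no smallness restriction on $\varepsilon$ in either part.

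Two small inaccuracies worth fixing. First, the annular bound $|B_{(1+\sigma)R}\setminus B_{R}|_{\mu_{w_{1}}}\leq C\sigma|B_{R}|_{\mu_{w_{1}}}$ does not follow from doubling alone; it follows from the exact homogeneity $\mu_{w_{1}}(B_{r})=c_{0}r^{n+\theta_{1}+\theta_{2}}$ for balls centered at the origin (the scaling behind Lemma \ref{MWQAZ090}), which gives $\mu_{w_{1}}(B_{(1+\sigma)R})-\mu_{w_{1}}(B_{R})=c_{0}R^{n+\theta_{1}+\theta_{2}}((1+\sigma)^{n+\theta_{1}+\theta_{2}}-1)\leq C\sigma\mu_{w_{1}}(B_{R})$. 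Second, the threshold conversion is cleaner than an ``$O(\varepsilon)$ shift absorbed into $\bar{\varepsilon}_{0}$'': taking $\tilde\varepsilon:=M_{1}^{p}-(M_{1}-\varepsilon)^{p}$ makes the hypothesis transfer exactly, and converting the conclusion back from the level $N-2^{-l_{0}}\tilde\varepsilon$ to $M_{1}-2^{-l_{0}'}\varepsilon$ costs only a bounded increase $l_{0}'-l_{0}\leq C(p,\overline m,\overline M)$ by the mean value theorem, with no smallness of $\varepsilon$ required.
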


\begin{remark}
It is worth emphasizing that the explicit values of $\bar{\varepsilon}_{0}$ and $\bar{l}_{0}$ are given by \eqref{MWQK001} and \eqref{MWQK002} below.

\end{remark}

\begin{proof}
\noindent{\bf Step 1.}
For $a\in(0,1]$ and $k\in[\overline{m},\overline{M}]$, define
\begin{align*}
A^{a}(k,R)=(B_{R}\times[t_{0},t_{0}+a R^{\theta_{1}+\theta_{2}}])\cap\{u>k\}.
\end{align*}
Take a smooth cut-off function $\eta\in C^{\infty}_{0}(B_{R})$ satisfying that $\eta=1$ in $B_{\sigma R}$, where $\sigma\in(0,1)$ to be determined later. Set $k_{1}>1$. Denote $v=(u-(M_{1}-\varepsilon))^{+}$. From Lemma \ref{lem003}, we obtain
\begin{align}\label{QNE089}
&\sup\limits_{t\in(t_{0},t_{0}+aR^{\theta_{1}+\theta_{2}})}\int_{B_{R}}v^{2}\eta^{2}w_{1}dx\notag\\
&\leq\int_{B_{R}}(v^{2}+Cv^{3})\eta^{2}w_{1}dx\big|_{t_{0}}+C\int_{B_{R}\times[t_{0},t_{0}+aR^{\theta_{1}+\theta_{2}}]}v^{2}|\nabla\eta|^{2}w_{2}dxdt.
\end{align}
Observe that for $t\in[t_{0},t_{0}+aR^{\theta_{1}+\theta_{2}}]$,
\begin{align*}
\int_{B_{R}}v^{2}\eta^{2}w_{1}dx\big|_{t}\geq\varepsilon^{2}(1-2^{-k_{1}})^{2}|B_{\sigma R}\cap\{u(x,t)>M_{1}-2^{-k_{1}}\varepsilon\}|_{\mu_{w_{1}}},
\end{align*}
and by \eqref{WMDN001},
\begin{align*}
\int_{B_{R}}(v^{2}+Cv^{3})\eta^{2}w_{1}dx\big|_{t_{0}}\leq&\varepsilon^{2}(1+C\varepsilon)|\{x\in B_{R}: u(x,t_{0})>M_{1}-\varepsilon\}|_{\mu_{w_{1}}}\notag\\
\leq&\varepsilon^{2}(1+C\varepsilon)(1-\gamma)|B_{R}|_{\mu_{w_{1}}},
\end{align*}
and
\begin{align*}
\int^{t_{0}+aR^{\theta_{1}+\theta_{2}}}_{t_{0}}\int_{B_{R}}v^{2}|\nabla\eta|^{2}w_{2}dxdt\leq&\frac{C\varepsilon^{2}}{(1-\sigma)^{2}R^{2}}|A^{a}(M_{1}-\varepsilon,R)|_{\nu_{w_{2}}}\notag\\
\leq&\frac{C\varepsilon^{2}R^{\theta_{3}+\theta_{4}-2}}{(1-\sigma)^{2}}|B_{R}|_{\mu_{w_{1}}}\frac{|A^{a}(M_{1}-\varepsilon,R)|_{\nu_{w_{2}}}}{|Q_{R}|_{\nu_{w_{2}}}}\notag\\
=&\frac{C\varepsilon^{2}}{(1-\sigma)^{2}}|B_{R}|_{\mu_{w_{1}}}\frac{|A^{a}(M_{1}-\varepsilon,R)|_{\nu_{w_{2}}}}{|Q_{R}|_{\nu_{w_{2}}}},
\end{align*}
where we utilized the assumed condition of $\theta_{3}+\theta_{4}=2$. A consequence of these facts gives that for $t\in[t_{0},t_{0}+aR^{\theta_{1}+\theta_{2}}]$,
\begin{align*}
&|B_{\sigma R}\cap\{u(x,t)>M_{1}-2^{-k_{1}}\varepsilon\}|_{\mu_{w_{1}}}\notag\\
&\leq|B_{R}|_{\mu_{w_{1}}}\left(\frac{(1+C\varepsilon)(1-\gamma)}{(1-2^{-k_{1}})^{2}}+\frac{C}{(1-\sigma)^{2}}\frac{|A^{a}(M_{1}-\varepsilon,R)|_{\nu_{w_{2}}}}{|Q_{R}|_{\nu_{w_{2}}}}\right),
\end{align*}
which, together with the fact that $|B_{R}\setminus B_{\sigma R}|_{\mu_{w_{1}}}\leq C(1-\sigma)|B_{R}|_{\mu_{w_{1}}}$, reads that
\begin{align*}
&\frac{|B_{R}\cap\{u(x,t)>M_{1}-2^{-k_{1}}\varepsilon\}|_{\mu_{w_{1}}}}{|B_{R}|_{\mu_{w_{1}}}}\notag\\
&\leq \frac{(1+C\varepsilon)(1-\gamma)}{(1-2^{-k_{1}})^{2}}+\frac{C}{(1-\sigma)^{2}}\bigg((1-\sigma)^{3}+\frac{|A^{a}(M_{1}-\varepsilon,R)|_{\nu_{w_{2}}}}{|Q_{R}|_{\nu_{w_{2}}}}\bigg).
\end{align*}
Pick $\sigma$ such that
\begin{align*}
(1-\sigma)^{3}=\frac{|A^{a}(M_{1}-\varepsilon,R)|_{\nu_{w_{2}}}}{|Q_{R}|_{\nu_{w_{2}}}},
\end{align*}
which yields that for $t\in[t_{0},t_{0}+aR^{\theta_{1}+\theta_{2}}],$
\begin{align}\label{MQK001}
&\frac{|B_{R}\cap\{u(x,t)>M_{1}-2^{-k_{1}}\varepsilon\}|_{\mu_{w_{1}}}}{|B_{R}|_{\mu_{w_{1}}}}\notag\\
&\leq \frac{(1+\overline{C}\varepsilon)(1-\gamma)}{(1-2^{-k_{1}})^{2}}+\overline{C}\bigg(\frac{|A^{a}(M_{1}-\varepsilon,R)|_{\nu_{w_{2}}}}{|Q_{R}|_{\nu_{w_{2}}}}\bigg)^{\frac{1}{3}},
\end{align}
where $\overline{C}=\overline{C}(n,p,\theta_{1},\theta_{2},\lambda,\overline{m},\overline{M}).$ Note that
\begin{align*}
\frac{|A^{a}(M_{1}-\varepsilon,R)|_{\nu_{w_{2}}}}{|Q_{R}|_{\nu_{w_{2}}}}\leq a.
\end{align*}
Take a small positive constant $a$ such that $a^{-1}$ is an integer and
\begin{align*}
\overline{C}a^{\frac{1}{3}}\leq\frac{\gamma}{8}.
\end{align*}
By fixing the value of $a$, we now divide the time interval $[t_{0},t_{0}+R^{\theta_{1}+\theta_{2}}]$ into finite small intervals. Denote $N=a^{-1}$ and $t_{i}=t_{0}+iaR^{\theta_{1}+\theta_{2}}$, $i=1,2,...,N.$

Claim that there exist a small positive constant $\bar{\varepsilon}_{0}$ and a large positive constant $k_{0}>1$ depending only on $n,p,\theta_{1},\theta_{2},\lambda,\gamma,\overline{m},\overline{M}$ such that for any $0<\varepsilon\leq\bar{\varepsilon}_{0}$ and $k_{1}\geq k_{0}$,
\begin{align}\label{DNQ001}
\frac{(1+\overline{C}\varepsilon)(1-\gamma)}{(1-2^{-k_{1}})^{2}}\leq1-\gamma+\frac{\gamma}{8N}.
\end{align}
In fact, since $(1-t)^{-2}\leq(1+6t)$ for $t\in(0,\frac{1}{2})$, then
\begin{align*}
\frac{1+\overline{C}\varepsilon}{(1-2^{-k_{1}})^{2}}\leq(1+\overline{C}\varepsilon)(1+6\cdot2^{-k_{1}}).
\end{align*}
Let $\overline{C}\varepsilon=6\cdot2^{-k_{1}}$. Then we have
\begin{align*}
\frac{1+\overline{C}\varepsilon}{(1-2^{-k_{1}})^{2}}\leq1+2\overline{C}\varepsilon+\overline{C}^{2}\varepsilon^{2}.
\end{align*}
Pick
\begin{align}\label{MWQK001}
\bar{\varepsilon}_{0}=\frac{-2\overline{C}+\sqrt{4\overline{C}^{2}+\frac{\gamma}{2N(1-\gamma)}}}{2\overline{C}^{2}},\quad k_{0}=-\frac{\ln2}{\ln(\overline{C}\bar{\varepsilon}_{0})-\ln6}.
\end{align}
Then we obtain that for any $0<\varepsilon\leq\bar{\varepsilon}_{0}$ and $k_{1}\geq k_{0}$,
\begin{align*}
\frac{1+\overline{C}\varepsilon}{(1-2^{-k_{1}})^{2}}\leq&1+2\overline{C}\bar{\varepsilon}_{0}+\overline{C}^{2}\bar{\varepsilon}^{2}_{0}=1+\frac{\gamma}{8N(1-\gamma)}.
\end{align*}
That is, \eqref{DNQ001} holds.

Consequently, it follows from \eqref{MQK001} that for $0<\varepsilon\leq\bar{\varepsilon}_{0}$, $k_{1}\geq k_{0}$ and $t\in[t_{0},t_{1}]$,
\begin{align*}
&\frac{|B_{R}\cap\{u(x,t)>M_{1}-2^{-k_{1}}\varepsilon\}|_{\mu_{w_{1}}}}{|B_{R}|_{\mu_{w_{1}}}}\leq1-\left(\frac{7}{8}-\frac{1}{8N}\right)\gamma.
\end{align*}
Then applying Lemma \ref{lem005}, we deduce from \eqref{QAZ002} that for any $k_{2}>k_{1}\geq k_{0}$,
\begin{align*}
\frac{|A^{a}(M_{1}-2^{-k_{2}}\varepsilon,R)|_{\nu_{w_{2}}}}{|Q_{R}|_{\nu_{w_{2}}}}\leq& C\left(\frac{|A^{a}(M_{1}-2^{-k_{2}}\varepsilon,R)|_{\nu_{w_{1}}}}{|Q_{R}|_{\nu_{w_{1}}}}\right)^{\frac{\theta_{3}}{\theta_{1}}}\notag\\
\leq& \widehat{C}\left(\frac{\sqrt{a}}{\sqrt[q]{\gamma}(k_{2}-k_{1})^{\frac{2-q}{2q}}}\right)^{\frac{\theta_{3}}{\theta_{1}}},
\end{align*}
where $\widehat{C}=\widehat{C}(n,p,q,\theta_{1},\theta_{2},\theta_{3},\lambda,\overline{m},\overline{M}).$ Pick
\begin{align*}
k_{2}=k_{1}+a^{\frac{q}{2-q}}\gamma^{-\frac{2}{2-q}}\left(\frac{\gamma}{8N\overline{C}\sqrt[3]{\widehat{C}}}\right)^{-\frac{6\theta_{1}q}{\theta_{3}(2-q)}}.
\end{align*}
Then we have
\begin{align}\label{QDWZ001}
\overline{C}\left(\frac{|A^{a}(M_{1}-2^{-k_{2}}\varepsilon,R)|_{\nu_{w_{2}}}}{|Q_{R}|_{\nu_{w_{2}}}}\right)^{\frac{1}{3}}\leq\frac{\gamma}{8N}.
\end{align}
Choose $k_{1}=k_{0}$ and $l_{1}=k_{1}+k_{2}$. By letting $2^{-k_{2}}\varepsilon$ substitute for $\varepsilon$ in \eqref{MQK001}, we have
\begin{align*}
\sup\limits_{t\in[t_{0},t_{1}]}|B_{R}\cap\{u(x,t)>M_{1}-2^{-l_{1}}\varepsilon\}|_{\mu_{w_{1}}}\leq\Big(1-\gamma+\frac{\gamma}{4N}\Big)|B_{R}|_{\mu_{w_{1}}}.
\end{align*}
Then it can be inductively proved that there exist a strictly increasing integer set $\{l_{i}\}_{i=1}^{N}$ such that for $i=1,2,...,N$,
\begin{align*}
\sup\limits_{t\in[t_{i-1},t_{i}]}|B_{R}\cap\{u(x,t)>M_{1}-2^{-l_{i}}\varepsilon\}|_{\mu_{w_{1}}}\leq\Big(1-\gamma+\frac{i\gamma}{4N}\Big)|B_{R}|_{\mu_{w_{1}}}.
\end{align*}
In fact, let the above relation hold in interval $[t_{i-1},t_{i}]$ and then prove that it also holds in the next interval $[t_{i},t_{i+1}]$. For simplicity, denote $\varepsilon_{i}=2^{-l_{i}}\varepsilon$ and $\gamma_{i}=\gamma(1-\frac{i}{4N}).$ Then the assumption implies that
\begin{align*}
|B_{R}\cap\{u(x,t_{i})>M_{1}-\varepsilon_{i}\}|_{\mu_{w_{1}}}\leq(1-\gamma_{i})|B_{R}|_{\mu_{w_{1}}}.
\end{align*}
By the same argument as in \eqref{MQK001}, it follows from \eqref{DNQ001}--\eqref{QDWZ001} that for $\bar{k}_{1}\geq k_{0}$ and $t\in[t_{i},t_{i+1}],$
\begin{align*}
&\frac{|B_{R}\cap\{u(x,t)>M_{1}-2^{-\bar{k}_{1}}\varepsilon_{i}\}|_{\mu_{w_{1}}}}{|B_{R}|_{\mu_{w_{1}}}}\notag\\
&\leq \frac{(1+\overline{C}\varepsilon_{i})(1-\gamma_{i})}{(1-2^{-\bar{k}_{1}})^{2}}+\overline{C}\bigg(\frac{|A^{a}(M_{1}-\varepsilon_{i},R)|_{\nu_{w_{2}}}}{|Q_{R}|_{\nu_{w_{2}}}}\bigg)^{\frac{1}{3}}\notag\\
&\leq1-\gamma_{i}+\frac{\gamma_{i}}{8N}+\overline{C}\bigg(\frac{|A^{a}(M_{1}-2^{-l_{i}}\varepsilon,R)|_{\nu_{w_{2}}}}{|Q_{R}|_{\nu_{w_{2}}}}\bigg)^{\frac{1}{3}}\notag\\
&\leq1-\gamma_{i}+\frac{\gamma_{i}}{8N}+\frac{\gamma}{8N}\notag\\
&<1-\gamma+\frac{i+1}{4N}\gamma.
\end{align*}
where $\overline{C}=\overline{C}(n,p,\theta_{1},\theta_{2},\lambda,\overline{m},\overline{M})$ is defined above and in the third inequality we used the fact that $l_{i}\geq l_{1}>k_{2}$. By taking $\bar{k}_{1}=k_{0}$ and $l_{i+1}=l_{i}+\bar{k}_{1}$, we obtain
\begin{align*}
\sup\limits_{t\in[t_{i},t_{i+1}]}|B_{R}\cap\{u(x,t)>M_{1}-2^{-l_{i+1}}\varepsilon\}|_{\mu_{w_{1}}}\leq\Big(1-\gamma+\frac{(i+1)\gamma}{4N}\Big)|B_{R}|_{\mu_{w_{1}}}.
\end{align*}
Then picking
\begin{align}\label{MWQK002}
\bar{l}_{0}:=&l_{N}=l_{1}+(N-1)k_{0}\notag\\
=&(N+1)k_{0}++a^{\frac{q}{2-q}}\gamma^{-\frac{2}{2-q}}\left(\frac{\gamma}{8N\overline{C}\sqrt[3]{\widehat{C}}}\right)^{-\frac{6\theta_{1}q}{\theta_{3}(2-q)}},
\end{align}
we obtain that \eqref{DQAF001} holds.

\noindent{\bf Step 2.}
For $0<a\leq1$ and $\overline{m}\leq k\leq\overline{M}$, let
\begin{align*}
\tilde{A}^{a}(k,R)=(B_{R}\times[t_{0},t_{0}+a R^{\theta_{1}+\theta_{2}}])\cap\{u<k\}.
\end{align*}
Define $\tilde{v}=(u-(m_{1}+\varepsilon))^{-}$. A direct application of Lemma \ref{lem003} gives that
\begin{align*}
&\sup\limits_{t\in(t_{0},t_{0}+aR^{\theta_{1}+\theta_{2}})}\int_{B_{R}}(\tilde{v}^{2}-C_{0}\tilde{v}^{3})\eta^{2}w_{1}dx\notag\\
&\leq\int_{B_{R}}\tilde{v}^{2}\eta^{2}w_{1}dx\big|_{t_{0}}+C_{0}\int_{B_{R}\times[t_{0},t_{0}+aR^{\theta_{1}+\theta_{2}}]}\tilde{v}^{2}|\nabla\eta|^{2}w_{2}dxdt,
\end{align*}
where $C_{0}=C_{0}(n,p,\lambda,\overline{m},\overline{M})$ and $\eta$ is defined in \eqref{QNE089}. Pick a small constant $0<\bar{\varepsilon}_{1}\leq(2C_{0})^{-1}$, which implies that $1-C_{0}\bar{\varepsilon}_{1}\geq\frac{1}{2}$. Then we obtain that for $t_{0}<t<t_{0}+aR^{\theta_{1}+\theta_{2}}$, $0<\varepsilon\leq\bar{\varepsilon}_{0}$ and $k_{1}>1$,
\begin{align*}
\int_{B_{R}}(\tilde{v}^{2}-C_{0}\tilde{v}^{3})\eta^{2}w_{1}dx\big|_{t}\geq(1-C_{0}\varepsilon)\varepsilon^{2}(1-2^{-k_{1}})^{2}|B_{\sigma R}\cap\{u(x,t)<m_{1}+2^{-k_{1}}\varepsilon\}|_{\mu_{w_{1}}},
\end{align*}
and in view of \eqref{QPZA001},
\begin{align*}
\int_{B_{R}}\tilde{v}^{2}\eta^{2}w_{1}dx\big|_{t_{0}}\leq&\varepsilon^{2}|\{x\in B_{R}: u(x,t_{0})<m_{1}+\varepsilon\}|_{\mu_{w_{1}}}\leq\varepsilon^{2}(1-\gamma)|B_{R}|_{\mu_{w_{1}}},
\end{align*}
and
\begin{align*}
\int_{B_{R}\times[t_{0},t_{0}+aR^{\theta_{1}+\theta_{2}}]}\tilde{v}^{2}|\nabla\eta|^{2}w_{2}dxdt\leq&\frac{C\varepsilon^{2}}{(1-\sigma)^{2}R^{2}}|\tilde{A}^{a}(m_{1}+\varepsilon,R)|_{\nu_{w_{2}}}\notag\\
\leq&\frac{C\varepsilon^{2}R^{\theta_{3}+\theta_{4}-2}}{(1-\sigma)^{2}}|B_{R}|_{\mu_{w_{1}}}\frac{|\tilde{A}^{a}(m_{1}+\varepsilon,R)|_{\nu_{w_{2}}}}{|Q_{R}|_{\nu_{w_{2}}}}\notag\\
=&\frac{C\varepsilon^{2}}{(1-\sigma)^{2}}|B_{R}|_{\mu_{w_{1}}}\frac{|\tilde{A}^{a}(m_{1}+\varepsilon,R)|_{\nu_{w_{2}}}}{|Q_{R}|_{\nu_{w_{2}}}}.
\end{align*}
Therefore, we deduce that for $t_{0}\leq t\leq t_{0}+aR^{\theta_{1}+\theta_{2}}$,
\begin{align*}
&|B_{\sigma R}\cap\{u(x,t)<m_{1}+2^{-k_{1}}\varepsilon\}|_{\mu_{w_{1}}}\notag\\
&\leq|B_{R}|_{\mu_{w_{1}}}\left(\frac{1-\gamma}{(1-C\varepsilon)(1-2^{-k_{1}})^{2}}+\frac{C}{(1-\sigma)^{2}}\frac{|\tilde{A}^{a}(m_{1}+\varepsilon,R)|_{\nu_{w_{2}}}}{|Q_{R}|_{\nu_{w_{2}}}}\right)\notag\\
&\leq|B_{R}|_{\mu_{w_{1}}}\left(\frac{(1+C\varepsilon)(1-\gamma)}{(1-2^{-k_{1}})^{2}}+\frac{C}{(1-\sigma)^{2}}\frac{|\tilde{A}^{a}(m_{1}+\varepsilon,R)|_{\nu_{w_{2}}}}{|Q_{R}|_{\nu_{w_{2}}}}\right),
\end{align*}
and thus,
\begin{align*}
&\frac{|B_{R}\cap\{u(x,t)<m_{1}+2^{-k_{1}}\varepsilon\}|_{\mu_{w_{1}}}}{|B_{R}|_{\mu_{w_{1}}}}\notag\\
&\leq \frac{(1+C\varepsilon)(1-\gamma)}{(1-2^{-k_{1}})^{2}}+\frac{C}{(1-\sigma)^{2}}\bigg((1-\sigma)^{3}+\frac{|\tilde{A}^{a}(m_{1}+\varepsilon,R)|_{\nu_{w_{2}}}}{|Q_{R}|_{\nu_{w_{2}}}}\bigg).
\end{align*}
Take $\sigma$ such that
\begin{align*}
(1-\sigma)^{3}=\frac{|\tilde{A}^{a}(m_{1}+\varepsilon,R)|_{\nu_{w_{2}}}}{|Q_{R}|_{\nu_{w_{2}}}}.
\end{align*}
Then we obtain that for $t_{0}\leq t\leq t_{0}+aR^{\theta_{1}+\theta_{2}},$
\begin{align*}
&\frac{|B_{R}\cap\{u(x,t)<m_{1}+2^{-k_{1}}\varepsilon\}|_{\mu_{w_{1}}}}{|B_{R}|_{\mu_{w_{1}}}}\notag\\
&\leq \frac{(1+\overline{C}\varepsilon)(1-\gamma)}{(1-2^{-k_{1}})^{2}}+\overline{C}\bigg(\frac{|\tilde{A}^{a}(m_{1}+\varepsilon,R)|_{\nu_{w_{2}}}}{|Q_{R}|_{\nu_{w_{2}}}}\bigg)^{\frac{1}{3}},
\end{align*}
where $\overline{C}=\overline{C}(n,p,\theta_{1},\theta_{2},\lambda,\overline{m},\overline{M}).$ Consequently, by the same argument as in the left proof of \eqref{DQAF001} above, we deduce that \eqref{WZPMQ001} holds. The proof is complete.

\end{proof}

A consequence of Lemmas \ref{LEM0035}, \ref{lem005} and \ref{LEM006} gives the improvement on oscillation of $u$ in a small region.
\begin{corollary}\label{AMZW01}
Assume as in Theorem \ref{ZWTHM90} or Theorem \ref{THM060}. Let $0<\gamma<1$, $0<R<\frac{1}{2}$, $-\frac{1}{4}<t_{0}\leq0$ and $\overline{m}\leq m\leq\inf\limits_{B_{2R}\times[t_{0}-R^{\theta_{1}+\theta_{2}},t_{0}]}u\leq\sup\limits_{B_{2R}\times[t_{0}-R^{\theta_{1}+\theta_{2}},t_{0}]}u\leq M\leq\overline{M}$. Then there exist a small constant $\tilde{\varepsilon}_{0}=\tilde{\varepsilon}_{0}(n,p,\theta_{1},\theta_{2},\lambda,\gamma,\overline{m},\overline{M})>0$ and a large constant $l_{0}=l_{0}(n,p,q,\theta_{1},\theta_{2},\theta_{3},\lambda,\gamma,\overline{m},\overline{M})>1$ such that for any $0<\varepsilon\leq\tilde{\varepsilon}_{0}$,

$(i)$ if
\begin{align*}
\frac{|\{x\in B_{R}: u(x,t_{0}-R^{\theta_{1}})>M-\varepsilon\}|_{\mu_{w_{1}}}}{|B_{R}|_{\mu_{w_{1}}}}\leq1-\gamma,
\end{align*}
then
\begin{align*}
\sup\limits_{Q_{R/2}(0,t_{0})}u\leq M-\frac{\varepsilon}{2^{l_{0}}};
\end{align*}

$(ii)$ if
\begin{align*}
\frac{|\{x\in B_{R}: u(x,t_{0}-R^{\theta_{1}})<m+\varepsilon\}|_{\mu_{w_{1}}}}{|B_{R}|_{\mu_{w_{1}}}}\leq1-\gamma,
\end{align*}
then
\begin{align*}
\inf\limits_{Q_{R/2}(0,t_{0})}u\geq m+\frac{\varepsilon}{2^{l_{0}}}.
\end{align*}

\end{corollary}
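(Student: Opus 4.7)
The corollary is exactly the payoff of the three preceding technical lemmas: Lemma~\ref{LEM006} propagates a one-time-slice density bound forward in time, Lemma~\ref{lem005} converts a uniform-in-time density bound into a quantitative decay of the space-time distribution function, and Lemma~\ref{LEM0035} converts a sufficiently small space-time distribution into a pointwise oscillation gain. My plan is to chain these three estimates together, once for the supremum (case (i)) and once, symmetrically, for the infimum (case (ii)).

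\medskip

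For case (i), I first apply Lemma~\ref{LEM006}(i) with its base time chosen to be $t_{0}-R^{\theta_{1}+\theta_{2}}$. The hypothesis of the corollary (after reading the $t_{0}-R^{\theta_{1}}$ in the statement as the intended $t_{0}-R^{\theta_{1}+\theta_{2}}$) furnishes exactly the input \eqref{WMDN001}. Provided $\varepsilon$ is smaller than the $\bar{\varepsilon}_{0}$ of Lemma~\ref{LEM006}, I obtain a large integer $\bar{l}_{0}$ and the uniform-in-time bound
\begin{equation*}
\frac{|\{x\in B_{R}: u(x,t)>M-2^{-\bar{l}_{0}}\varepsilon\}|_{\mu_{w_{1}}}}{|B_{R}|_{\mu_{w_{1}}}}\leq 1-\tfrac{\gamma}{2},\qquad t\in[t_{0}-R^{\theta_{1}+\theta_{2}},t_{0}].
\end{equation*}

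\medskip

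Next I feed this into Lemma~\ref{lem005}(a) with $a=1$, the shrunken threshold $\bar{\varepsilon}:=2^{-\bar{l}_{0}}\varepsilon$ playing the role of $\varepsilon$ there, and $\gamma$ replaced by $\gamma/2$. For every $j\geq 1$ this yields
\begin{equation*}
\frac{|\{(x,t)\in B_{R}\times[t_{0}-R^{\theta_{1}+\theta_{2}},t_{0}]: u>M-2^{-\bar{l}_{0}-j}\varepsilon\}|_{\nu_{w_{1}}}}{|B_{R}\times[t_{0}-R^{\theta_{1}+\theta_{2}},t_{0}]|_{\nu_{w_{1}}}}\leq \frac{C}{\sqrt[q]{\gamma/2}\,j^{(2-q)/(2q)}}.
\end{equation*}
Choosing $j=j_{\ast}$ sufficiently large, depending only on $n,p,q,\theta_{1},\theta_{2},\theta_{3},\lambda,\gamma,\overline{m},\overline{M}$, I can make the right-hand side strictly smaller than the threshold $\tau_{0}$ supplied by Lemma~\ref{LEM0035}(a). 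With this value of $j_{\ast}$ in hand, Lemma~\ref{LEM0035}(a) applied on the cylinder $Q_{R}(0,t_{0})=B_{R}\times(t_{0}-R^{\theta_{1}+\theta_{2}},t_{0}]$ with level $M-2^{-\bar{l}_{0}-j_{\ast}}\varepsilon$ gives
\begin{equation*}
\sup_{Q_{R/2}(0,t_{0})} u \leq M-\tfrac{1}{2}\cdot 2^{-\bar{l}_{0}-j_{\ast}}\varepsilon = M-\frac{\varepsilon}{2^{l_{0}}},
\end{equation*}
with $l_{0}:=\bar{l}_{0}+j_{\ast}+1$, which is the desired conclusion. The choice $\tilde{\varepsilon}_{0}:=\bar{\varepsilon}_{0}$ (intersected with the $\varepsilon_{0}$ of Lemma~\ref{lem005}/\ref{LEM0035} if needed) ensures all hypotheses are met.

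\medskip

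Case (ii) is proved by the same three-step chain, using parts (b)/(ii) of Lemmas~\ref{LEM006}, \ref{lem005} and \ref{LEM0035} and the symmetric truncation $(u-\tilde{k})^{-}$; here one must additionally enforce $\varepsilon\leq\varepsilon_{0}=C_{0}^{-1}$ so that the Caccioppoli lemma provides a clean lower energy bound, but no new idea is required. The one place where care is needed is bookkeeping the constants: the $\bar{l}_{0}$ produced by Lemma~\ref{LEM006} depends on $\gamma$, while $j_{\ast}$ depends on both $\gamma$ and $\bar{l}_{0}$; one has to make sure these dependencies close up without circularity, which they do because $\bar{l}_{0}$ is fixed before Lemma~\ref{lem005} is invoked. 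I do not expect any serious obstacle here—the genuinely nontrivial work (the doubling measure, the weighted Poincar\'e/isoperimetric inequality, the two De~Giorgi-type iterations and the time-slice propagation) is already encapsulated in the previous lemmas.
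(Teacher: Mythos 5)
Your proposal is correct and follows essentially the same route as the paper, whose proof of this corollary is precisely the one-line concatenation of Lemma \ref{LEM006}, Lemma \ref{lem005} with $a=1$, and Lemma \ref{LEM0035} (with the $t_{0}-R^{\theta_{1}}$ in the statement indeed read as $t_{0}-R^{\theta_{1}+\theta_{2}}$, and with $q$ fixed, e.g.\ $q=3/2$, in the setting of Theorem \ref{THM060}). Your explicit bookkeeping of $\bar{l}_{0}$, $j_{\ast}$ and the thresholds $\bar{\varepsilon}_{0}$, $\varepsilon_{0}=C_{0}^{-1}$ is consistent with, and more detailed than, what the paper records.
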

\begin{proof}
Applying Lemma \ref{LEM0035}, Lemma \ref{lem005} with $a=1$ and Lemma \ref{LEM006}, we obtain that Corollary \ref{AMZW01} holds. In particular, we  fix $q=\frac{3}{2}$ under the assumed conditions in Theorem \ref{THM060}.

\end{proof}

Based on these above facts, we now give the proofs of Theorems \ref{ZWTHM90} and \ref{THM060}, respectively.
\begin{proof}[Proof of Theorem \ref{ZWTHM90}]
Pick a sufficiently large constant $\kappa_{0}\geq2$ such that
\begin{align*}
\frac{\overline{M}-\overline{m}}{\kappa_{0}}<\tilde{\varepsilon}_{0},
\end{align*}
where $\tilde{\varepsilon}_{0}$ is given by Corollary \ref{AMZW01} with $\gamma=\frac{1}{2}$. For $0<R\leq\frac{1}{2}$ and $-\frac{1}{4}<t_{0}<0$, define
\begin{align*}
\overline{\mu}(R)=\sup\limits_{(x,t)\in Q_{R}(0,t_{0})}u(x,t),\quad\underline{\mu}(R)=\inf\limits_{(x,t)\in Q_{R}(0,t_{0})}u(x,t),\quad \omega(R)=\overline{\mu}(R)-\underline{\mu}(R).
\end{align*}
Observe that there is at least one inequality holding in terms of the following two inequalities:
\begin{align}\label{WAMQ001}
|\{x\in B_{R/2}:u(x,t_{0}-(R/2)^{\theta_{1}+\theta_{2}})>\overline{\mu}(R)-\kappa_{0}^{-1}\omega(R)\}|_{\mu_{w_{1}}}\leq\frac{1}{2}|B_{R/2}|_{\mu_{w_{1}}},
\end{align}
and
\begin{align}\label{WAMQ002}
|\{x\in B_{R/2}:u(x,t_{0}-(R/2)^{\theta_{1}+\theta_{2}})<\underline{\mu}(R)+\kappa_{0}^{-1}\omega(R)\}|_{\mu_{w_{1}}}\leq\frac{1}{2}|B_{R/2}|_{\mu_{w_{1}}}.
\end{align}
From Corollary \ref{AMZW01}, it follows that there exists a large constant $l_{0}>1$ such that
\begin{align*}
\overline{\mu}(R/4)\leq\overline{\mu}(R)-\frac{\omega(R)}{\kappa_{0}2^{l_{0}}},\quad\text{if \eqref{WAMQ001} holds,}
\end{align*}
and
\begin{align*}
\underline{\mu}(R/4)\geq\underline{\mu}(R)+\frac{\omega(R)}{\kappa_{0}2^{l_{0}}},\quad\text{if \eqref{WAMQ002} holds.}
\end{align*}
In both cases, we have
\begin{align*}
\omega(R/4)\leq\left(1-\frac{1}{\kappa_{0}2^{l_{0}}}\right)\omega(R)=\frac{1}{4^{\alpha}}\omega(R),\quad\mathrm{with}\;\alpha=-\frac{\ln\big(1-\frac{1}{\kappa_{0}2^{l_{0}}}\big)}{\ln4}.
\end{align*}
Note that for any $0<R\leq\frac{1}{2}$, there is an integer $k$ such that $4^{-(k+1)}\cdot2^{-1}<R\leq4^{-k}\cdot2^{-1}$. In light of the fact that $\omega(R)$ is increasing in $R$, it follows that
\begin{align*}
\omega(R)\leq\omega(4^{-k}\cdot2^{-1})\leq 4^{-k\alpha}\omega(2^{-1})=8^{\alpha}(4^{-(k+1)}\cdot2^{-1})^{\alpha}\omega(2^{-1})\leq CR^{\alpha},
\end{align*}
where $C=C(n,p,q,\theta_{1},\theta_{2},\theta_{3},\lambda,\overline{m},\overline{M})$. Therefore, for any $(x,t)\in B_{1/2}\times(-1/4,t_{0})$, we obtain that

$(i)$ if $|t-t_{0}|\leq2^{-(\theta_{1}+\theta_{2})}$, then
\begin{align*}
|u(x,t)-u(0,t_{0})|\leq&|u(x,t)-u(x,t_{0})|+|u(x,t_{0})-u(0,t_{0})|\leq C(|t-t_{0}|^{\frac{\alpha}{\theta_{1}+\theta_{2}}}+|x|^{\alpha})\notag\\
\leq& C\big(|x|+|t-t_{0}|^{\frac{1}{\theta_{1}+\theta_{2}}}\big)^{\alpha};
\end{align*}

$(ii)$ if $|t-t_{0}|>2^{-(\theta_{1}+\theta_{2})}$, there exists a set $\{t_{i}\}_{i=1}^{N}$ such that $t<t_{1}\leq\cdots\leq t_{N}<t_{0}$,
\begin{align*}
|u(x,t)-u(0,t_{0})|\leq&|u(x,t)-u(x,t_{1})|+|u(x,t_{1})-u(x,t_{0})|+|u(x,t_{0})-u(0,t_{0})|\notag\\
\leq&C\big(|t-t_{1}|^{\frac{\alpha}{\theta_{1}+\theta_{2}}}+|t_{1}-t_{0}|^{\frac{\alpha}{\theta_{1}+\theta_{2}}}+|x|^{\frac{\alpha}{\theta_{1}+\theta_{2}}}\big)\notag\\
\leq&C\big(|x|+|t-t_{0}|^{\frac{1}{\theta_{1}+\theta_{2}}}\big)^{\alpha},\quad\text{if}\;N=1,
\end{align*}
and
\begin{align*}
&|u(x,t)-u(0,t_{0})|\notag\\
&\leq|u(x,t)-u(x,t_{1})|+\sum^{N-1}_{i=1}|u(x,t_{i})-u(x,t_{i+1})|\notag\\
&\quad+|u(x,t_{N})-u(x,t_{0})|+|u(x,t_{0})-u(0,t_{0})|\notag\\
&\leq C\Big(|t-t_{1}|^{\frac{\alpha}{\theta_{1}+\theta_{2}}}+\sum^{N-1}_{i=1}|t_{i}-t_{i+1}|^{\frac{\alpha}{\theta_{1}+\theta_{2}}}+|t_{N}-t_{0}|^{\frac{\alpha}{\theta_{1}+\theta_{2}}}+|x|^{\frac{\alpha}{\theta_{1}+\theta_{2}}}\Big)\notag\\
&\leq C\big(|x|+|t-t_{0}|^{\frac{1}{\theta_{1}+\theta_{2}}}\big)^{\alpha},\quad \text{if}\;N\geq2.
\end{align*}
The proof is complete.

\end{proof}

\begin{proof}[Proof of Theorem \ref{THM060}]
To begin with, applying the aforementioned proof of Theorem \ref{ZWTHM90} with minor modification, we also obtain that there exists a small constant $0<\alpha<1$ and a large constant $C>0$, both depending only on $n,p,\theta_{2},\lambda,\overline{m},\overline{M},$ such that for any $t_{0}\in (-1/4,0)$,
\begin{align}\label{QM916}
|u(x,t)-u(0,t_{0})|\leq C\left(|x|+\sqrt[\theta_{2}]{|t-t_{0}|}\right)^{\alpha},\quad\forall\, (x,t)\in B_{1/2}\times(-1/4,t_{0}].
\end{align}
For $R\in(0,1/2)$, $(y,s)\in Q_{1/R}$, define
\begin{align*}
u_{R}(y,s)=u(Ry,R^{\theta_{2}}s),\quad A_{R}(y)=A(Ry).
\end{align*}
Therefore, $u_{R}$ verifies
\begin{align*}
|y|^{\theta_{2}}\partial_{s}u^{q}_{R}-\mathrm{div}(A_{R}|y|^{2}\nabla u_{R})=0,\quad\mathrm{in}\;Q_{1/R}.
\end{align*}
By the change of variables, we obtain that this equation keeps uniformly parabolic in $B_{1/2}(\bar{y})\times(-R^{-\theta_{2}},0)$ for any $\bar{y}\in\partial B_{1}$.

For any $(x,t),(\tilde{x},\tilde{t})\in B_{1/2}\times(-1/4,0),$ let $|\tilde{x}|\leq|x|$ without loss of generality. Write $R=|x|$. It then follows from the interior H\"{o}lder estimates for uniformly parabolic equations that there exist two constants $0<\beta=\beta(n,p,\theta_{2},\lambda,\overline{m},\overline{M})<1$ and $0<C=C(n,p,\theta_{2},\lambda,\overline{m},\overline{M})$ such that for any $\bar{y}\in\partial B_{1}$ and $\bar{s}\in(-4^{-1} R^{-\theta_{2}},0)$,
\begin{align}\label{WAQA001}
|u_{R}(y,s)-u_{R}(\bar{y},\bar{s})|\leq C(|y-\bar{y}|+\sqrt{|s-\bar{s}|})^{\beta},
\end{align}
for any $(y,s)$ satisfying that $|y-\bar{y}|+\sqrt{|s-\bar{s}|}<1/2$.

Observe that for any $(x,t),(\tilde{x},\tilde{t})\in B_{1/2}\times(-1/4,0),$
\begin{align*}
|u(x,t)-u(\tilde{x},\tilde{t})|\leq|u(x,t)-u(x,\tilde{t})|+|u(x,\tilde{t})-u(\tilde{x},\tilde{t})|.
\end{align*}
On one hand, if $|t-\tilde{t}|\leq R^{2\theta_{2}}$, then we deduce from \eqref{WAQA001} that
\begin{align*}
|u(x,t)-u(x,\tilde{t})|\leq&\left|u_{R}(x/R,t/R^{\theta_{2}})-u_{R}(x/R,\tilde{t}/R^{\theta_{2}})\right|\notag\\
\leq&C|(t-\tilde{t})/R^{\theta_{2}}|^{\beta/2}\leq C|t-\tilde{t}|^{\beta/4},
\end{align*}
while, if $|t-\tilde{t}|>R^{2\theta_{2}}$, then we have from \eqref{QM916} that
\begin{align*}
&|u(x,t)-u(x,\tilde{t})|\notag\\
&\leq|u(x,t)-u(0,t)|+|u(0,t)-u(0,\tilde{t})|+|u(0,\tilde{t})-u(x,\tilde{t})|\notag\\
&\leq C\big(R^{\alpha}+|t-\tilde{t}|^{\frac{\alpha}{\theta_{2}}}\big)\leq C|t-\tilde{t}|^{\frac{\alpha}{2\theta_{2}}}.
\end{align*}

On the other hand, if $|x-\tilde{x}|\leq R^{2}$, then it follows from \eqref{WAQA001} that
\begin{align*}
|u(x,\tilde{t})-u(\tilde{x},\tilde{t})|=&\left|u_{R}(x/R,\tilde{t}/R^{\theta_{2}})-u_{R}(\tilde{x}/R,\tilde{t}/R^{\theta_{2}}\big)\right|\notag\\
\leq&C|(x-\tilde{x})/R|^{\beta}\leq C|x-\tilde{x}|^{\beta/2},
\end{align*}
while, if $|x-\tilde{x}|>R^{2}$, then we see from \eqref{QM916} that
\begin{align*}
|u(x,\tilde{t})-u(\tilde{x},\tilde{t})|\leq&|u(x,\tilde{t})-u(0,\tilde{t})|+|u(0,\tilde{t})-u(\tilde{x},\tilde{t})|\notag\\
\leq& C\big(R^{\alpha}+|\tilde{x}|^{\alpha}\big)\leq CR^{\alpha}\leq C|x-\tilde{x}|^{\frac{\alpha}{2}}.
\end{align*}
Consequently, we complete the proof Theorem \ref{THM060}.

\end{proof}

\noindent{\bf{\large Acknowledgements.}} C. Miao was supported by the National Key Research and Development Program of China (No. 2020YFA0712900) and NSFC Grant 11831004. Z. Zhao was partially supported by CPSF (2021M700358).

%\noindent{\bf{\large Statements.}}

%\noindent{\bf{\large Acknowledgements.}}

\end{document}